\begin{document}

\newtheorem{lem}{Lemma}[section]
\newtheorem{prop}{Proposition}
\newtheorem{con}{Construction}[section]
\newtheorem{defi}{Definition}[section]
\newtheorem{coro}{Corollary}[section]
\newcommand{\hf}{\hat{f}}
\newtheorem{fact}{Fact}[section]
\newtheorem{theo}{Theorem}
\newcommand{\Br}{\Poin}
\newcommand{\Cr}{{\bf Cr}}
\newcommand{\dist}{{\rm dist}}
\newcommand{\diam}{\mbox{diam}\, }
\newcommand{\mod}{{\rm mod}\,}
\newcommand{\compose}{\circ}
\newcommand{\dbar}{\bar{\partial}}
\newcommand{\Def}[1]{{{\em #1}}}
\newcommand{\dx}[1]{\frac{\partial #1}{\partial x}}
\newcommand{\dy}[1]{\frac{\partial #1}{\partial y}}
\newcommand{\Res}[2]{{#1}\raisebox{-.4ex}{$\left|\,_{#2}\right.$}}
\newcommand{\sgn}{{\rm sgn}}

\newcommand{\CC}{\mathbb{C}}
\newcommand{\D}{{\bf D}}
\newcommand{\Dm}{{\bf D_-}}
\newcommand{\RR}{\mathbb{R}}
\newcommand{\NN}{\mathbb{N}}
\newcommand{\HH}{\mathbb{H}}
\newcommand{\ZZ}{\mathbb {Z}}
\newcommand{\tr}{\mbox{Tr}\,}
\newcommand{\R}{{\bf R}}
\newcommand{\C}{{\bf C}}

\newenvironment{nproof}[1]{\trivlist\item[\hskip \labelsep{\bf Proof{#1}.}]}
{\begin{flushright} $\square$\end{flushright}\endtrivlist}
\newenvironment{proof}{\begin{nproof}{}}{\end{nproof}}

\newenvironment{block}[1]{\trivlist\item[\hskip \labelsep{{#1}.}]}{\endtrivlist}
\newenvironment{definition}{\begin{block}{\bf Definition}}{\end{block}}

\newtheorem{conjec}{Conjecture}

\newtheorem{com}{Comment}
\font\mathfonta=msam10 at 11pt
\font\mathfontb=msbm10 at 11pt
\def\Bbb#1{\mbox{\mathfontb #1}}
\def\lesssim{\mbox{\mathfonta.}}
\def\suppset{\mbox{\mathfonta{c}}}
\def\subbset{\mbox{\mathfonta{b}}}
\def\grtsim{\mbox{\mathfonta\&}}
\def\gtrsim{\mbox{\mathfonta\&}}

\newcommand{\Poin}{{\bf Poin}}
\newcommand{\Bo}{\Box^{n}_{i}}
\newcommand{\Di}{{\cal D}}
\newcommand{\gd}{{\underline \gamma}}
\newcommand{\gu}{{\underline g }}
\newcommand{\ce}{\mbox{III}}
\newcommand{\be}{\mbox{II}}
\newcommand{\F}{\cal{F}}
\newcommand{\Ci}{\bf{C}}
\newcommand{\ai}{\mbox{I}}
\newcommand{\dupap}{\partial^{+}}
\newcommand{\dm}{\partial^{-}}
\newenvironment{note}{\begin{sc}{\bf Note}}{\end{sc}}
\newenvironment{notes}{\begin{sc}{\bf Notes}\ \par\begin{enumerate}}%
{\end{enumerate}\end{sc}}
\newenvironment{sol}
{{\bf Solution:}\newline}{\begin{flushright}
{\bf QED}\end{flushright}}

\title{Measure of the Julia Set of the Feigenbaum map
with infinite criticality}

\author{Genadi Levin
\thanks{Both authors were supported by Grant No. 2002062 
from the United States-Israel Binational Science Foundation
(BSF), Jerusalem, Israel.}\\
\small{Dept.\ of Math.}\\
\small{Hebrew University}\\
\small{Jerusalem 91904, Israel}\\
\small{\tt levin@math.huji.ac.il}\\
\and
Grzegorz \'{S}wia\c\negthinspace tek\\
\small{Wydzia\l\ MiNI}\\
\small{Politechnika Warszawska}\\
\small{Plac Politechniki 1}\\
\small{00-661 Warszawa, Poland}\\
\small{\tt g.swiatek@mini.pw.edu.pl}}
\normalsize
\maketitle
\abstract{
We consider fixed points of the Feigenbaum (periodic-doubling)
operator whose orders tend to infinity.
It is known that the hyperbolic
dimension of their Julia sets go to $2$. We prove that
the Lebesgue measure of these Julia sets tend to zero.
An important part of the proof  consists in applying martingale
theory to a stochastic process with non-integrable increments. 
}

\section{Introduction}


We consider fixed points of the Feigenbaum (periodic-doubling)
operator~\cite{feig2} whose orders tend to infinity.
It has been shown in~\cite{LSw2},~\cite{LSw3}, that the hyperbolic
dimension of their Julia sets go to $2$. In this paper we prove that
the Lebesgue measure (area) of these Julia sets tend to zero.
The question whether the area is indeed zero for finite orders
remains open. For the measure problem for maps with Fibonacci combinatorics,
see~\cite{SN},
and for quadratic Julia sets with positive area, see~\cite{BC}.







\paragraph{Outline of the proof.}
We follow the path known as ``the random walk argument''. 
In Sect. 3 we build a Markov partition
by modifying the partition that we used in~\cite{LSw2}.
This partition defines a ``level function'' on the phase space 
which tends to $+\infty$ at $\infty$ and to $-\infty$ at $0$.  
With respect to the level function, the dynamics of the tower of the
limit map defines a random process. We then study the 
probability distribution for this process and finally show that for
almost every point the process oscillates between $-\infty$ and
$+\infty$. The last step uses a martingale argument in the spirit 
of~\cite{BKNS}.

The process we study has transition probabilities that are
asymptotically symmetric with respect to the change of the sign and
their magnitude is $\sim n^{-2}$. This, of course, makes them
non-integrable. There has been a considerable interest in such process
coming from probability theory. The simplest case is a Markov process
$X_n$ 
with independent increments with a distribution law of this type. That
case was studied in~\cite{Linnik} with a further development
in~\cite{Russian, Aronson1}. The main result is that $\frac{X_n}{n} - c$ tends
in probability to an analytic limit distribution law. From this, it is
easy to conclude that almost every orbit oscillates between $-\infty$
and $+\infty$. This was then extended by~\cite{Aronson2} to the
dynamical context of iterated function systems with distortion, that
is, the case in which the increments are no longer independent. The
result about a limit distribution law, under suitable assumptions,
remains the same.

One important corollary from our results, which was announced
previously~\cite{LSw3}, p.424, is that the Julia set of the limit
transcendental map has area $0$, see Theorem~\ref{A}. 

\paragraph{Acknowledgment.} The authors thank 
Yuval Peres, Benjamin Weiss,
Jon Aaronson, and Michel Zinsmeister for valuable suggestions
and discussions on different stages of present work.

\subsection{Main results}

\paragraph{Notations and basic facts.}
We will write unimodal mappings of an interval, 
$H :\: [0,1] \rightarrow [0,1]$ in the following
non-standard form:

\[ H(x) = |E(x)|^{\ell} \]

where $\ell>1$ is a real number and 
$E$ is an analytic mapping with strictly negative
derivative on $[0,1]$ which maps $0$ to $1$ and $1$ to a point inside
$(-1,0)$. Then $H$ is unimodal with the minimum at some 
$x_{0} = E^{-1}(0) \in (0,1)$ and $x_0$ is the critical
point of order $\ell$. 

For $\ell$ which are
even integers there exists a unique pair $H:=H^{(\ell)}(x) =
|E_{\ell}(x)|^{\ell}$ and $\tau:=\tau_{\ell}>1$ which provides a
solution to the {\em Feigenbaum functional equation}
 
\begin{equation}\label{equ:14fa,1}
\tau H^2\tau^{-1}(x) = H(x) 
\end{equation}
for $x \in [0,1]$.

As $\ell$ goes to $\infty$, mappings $H^{(\ell)}$ converge to a
non-trivial analytic limit denoted by $H$~\cite{oldwit, LSw1}.
It satisfies the Feigenbaum equation with $\tau=\lim\tau_\ell>1$. 
According to ~\cite{LSw1}, the limit map $H$ extends
to an infinite unbranched cover
of either of two topological disc $U_{-}$ and $U_{+}$
onto a punctured round disc
$D_*=D(0, R)\setminus \{0\}$. Here
$U=U_{-}\cup U_+$ is compactly contained in the disc
$D(0, R)$ and $U_{\pm}$ touch each other at a single
point $x_0$, which is the limit
of the critical points for $H^{(\ell)}$.
In particular, the (filled) Julia set
$J(H)$ of $H$ is well-defined as the closure
of non-escaping points of $H: U\to D_*$.
$J(H)$ has no interior.
 

\paragraph{Statements.}

\begin{theo}\label{A}
The Julia set $J(H)$ of $H$ has area zero.
\end{theo}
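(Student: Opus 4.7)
The plan is to follow the random-walk strategy outlined in the introduction, adapting~\cite{BKNS} to the non-integrable dynamical setting at hand. Starting from the Markov partition of~\cite{LSw2}, I refine it (in Section~3) into countably many pieces $\{P_n\}_{n \in \ZZ}$ tiling $U = U_- \cup U_+$. The level function $L : U \to \ZZ$, defined by $L(x) := n$ for $x \in P_n$, tends to $+\infty$ near infinity in phase space and to $-\infty$ near $0$, as in the outline. The Markov property is enforced by pulling back an annular exhaustion of $D_* = D(0,R) \setminus \{0\}$ under the infinite cover $H : U_\pm \to D_*$, and the dynamics then induces a stochastic process $X_k := L(H^k(x))$ for $x$ sampled according to Lebesgue measure on $U$.

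Using the bounded-distortion estimates for inverse branches of $H$ on $U_\pm$ developed in~\cite{LSw2}, together with the geometry of the nested pieces at both ends of the partition, I next compute the one-step transition kernel of this Markov chain. The key quantitative claim is that $\mathbb{P}(X_{k+1} - X_k = m \mid X_k = n)$ is asymptotically symmetric in $m$ and of order $|m|^{-2}$, reflecting the exponential shrinking of moduli of the annular pieces in $D_*$ near $0$ transported back to $U$ by the infinite cover. This decay forces the increments to be non-integrable, with neither a mean nor a variance.

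The technical heart of the proof, and the step I expect to be the main obstacle, is the oscillation theorem: for Lebesgue-a.e.\ $x \in U$ one has $\limsup_k X_k = +\infty$ and $\liminf_k X_k = -\infty$. Because the increments are non-integrable, classical martingale convergence theorems do not apply directly. Following~\cite{BKNS} in spirit, I would truncate $X_k$ at a slowly growing threshold, build an auxiliary bounded martingale via a stopping-time argument, and control the contribution of the large untruncated jumps by a direct tail estimate. The Markov (rather than i.i.d.) nature of the increments is a further complication; I would exploit uniform distortion together with the asymptotic $\ZZ$-translation symmetry of the transition kernel to reduce to the i.i.d.\ limit-distribution results of~\cite{Linnik, Aronson1}, in the spirit of~\cite{Aronson2}.

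Finally, I would derive Theorem~\ref{A}. Assume for contradiction that $J(H)$ has positive area; then Lebesgue-a.e.\ $x \in J(H)$ is a density point of $J(H)$, and since its orbit stays in $U$ the process $X_k$ is defined for all $k$ and, by the oscillation theorem, visits arbitrarily extreme pieces of the Markov partition infinitely often. But in those extreme pieces the local density of $J(H)$ can be shown (using that $J(H)$ has empty interior and the infinite-order branching at $x_0$) to be bounded strictly below $1$; pulling this density defect back to a small neighborhood of $x$ through bounded distortion contradicts $x$ being a density point. Hence $J(H)$ has zero area.
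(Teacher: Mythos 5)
Your overall strategy (a level function, a walk with symmetric non-integrable $\sim m^{-2}$ increments, an oscillation theorem, then a measure-theoretic contradiction) is the right one, but the two load-bearing steps are not supplied, and one of them is set up in a way that would not work. First, you build the process from plain iterates of $H$, setting $X_k=L(H^k(x))$ with $L$ pulled back from an annular exhaustion of $D_*$, and you assert the one-step kernel is asymptotically symmetric of order $|m|^{-2}$ because of ``exponential shrinking of the annuli under the infinite cover''. That is not the source of these tails, and for your process the claim is doubtful: by the functional equation $\tau H^2\tau^{-1}=H$, one application of $H$ to a point at scale depth $k$ near $0$ corresponds to $2^{k}$ steps of the renormalized dynamics, so a single $H$-step from a deep level does not have a local kernel of this type at all. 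The correct setting is the map induced on the fundamental ring $\Omega_-\setminus\tau^{-1}\overline{\Omega_-}$ of the $\tau$-scaling (tower dynamics). There the $|m|^{-2}$ law comes from the Fatou coordinate of the parabolic point $x_0$ of $G^2$ (the derivative asymptotics $|(h^{-1})'(w)|^2\sim L|w|^{-3}$ give piece areas $\sim C|n|^{-2}$), and the symmetry you need is a \emph{principal-value} bound, $|\int\gamma_N\circ\Delta\,d\mu|\le Q$ uniformly over the relevant pushed-forward measures; its proof requires showing the leading constants at the two ends of the partition agree, which is done by comparing attracting and repelling Fatou coordinates through the horn map, together with Lipschitz control of the densities and an Abel summation. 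Without this uniform bound the walk may carry a drift and the oscillation theorem fails, so asserting ``asymptotic symmetry'' is precisely the gap. You also need to decompose the increment as $\Delta_n+I_n$ with $E(|I_n|^p)$ bounded, where $I_n$ records passages through the central (parabolic) pieces; obtaining this, and the uniform bounded distortion you invoke, requires a further inducing (a first-entry map past the central pieces) — arbitrary inverse branches of $H$ near $0$ and near $x_0$ do not have uniformly bounded distortion.

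Second, even once the walk is established for the induced/tower dynamics, your endgame needs a bridge back to genuine $H$-orbits: the tower maps $\tau^kH\tau^{-k}$ do not preserve $J(H)$, so to pull a density defect (or an escape to infinity) back to a neighborhood of a density point of $J(H)$ through the induced branches you must know that, on the Julia set, the relevant composed branches are honest iterates of $H$; this is a separate combinatorial statement (Proposition~\ref{prop:2jp,1} in the paper) which your plain-$H$ formulation hides but does not replace. On the positive side, your final density-point argument is a legitimate alternative to the paper's conclusion (the paper instead observes that for a.e.\ point the tower images leave every bounded set, which for a Julia point contradicts the boundedness of its $H$-orbit), and your truncation/stopping-time plan for the oscillation step is in the right spirit — the paper's actual device is a stopped supermartingale built from $\log^+$ of the process — but as written the probabilistic core rests on a transition-kernel estimate that is unproved and, for the process you defined, most likely false.
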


Note that by~\cite{LSw2},~\cite{LSw3} the hyperbolic
(in particular, Hausdorff) dimension of $J(H)$ is $2$.

A stronger result is presented in Theorem~\ref{rec},
which provides an additional property of the
corresponding tower dynamics, roughly that almost every point visits
every neighborhood of both $0$ and $\infty$.

\begin{coro}\label{finite} 
The area of the Julia set of the map
$H^{(\ell)}$ tends to zero
as the order $\ell$ grows.
\end{coro}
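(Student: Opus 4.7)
The plan is to deduce Corollary~\ref{finite} from Theorem~\ref{A} by an upper semicontinuity argument for the Julia sets $J(H^{(\ell)})$. By~\cite{LSw1}, the maps $H^{(\ell)}$, together with their domains $U^{(\ell)}$ and scaling constants $\tau_\ell$, converge as $\ell\to\infty$ to $H$, $U$, and $\tau$ respectively, uniformly on compact subsets of $U$; in particular, all Julia sets $J(H^{(\ell)})$ for sufficiently large $\ell$ lie in a common compact set $\Omega\supset J(H)$.

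The heart of the argument is the semicontinuity claim: for every compact $K\subset \mathbb{C}\setminus J(H)$, there is $\ell_0$ such that $K\cap J(H^{(\ell)})=\emptyset$ for all $\ell\ge \ell_0$. For each $z\in K$, either $z\notin\overline{U}$ or the forward $H$-orbit of $z$ eventually exits $U$ into $D_*\setminus U$ after finitely many iterates. Both are open conditions on $z$, and both are stable under uniform convergence of finitely many iterates of $H^{(\ell)}$ to those of $H$ on compact subsets of $U$. A finite subcover of $K$ by such neighborhoods then yields the claim.

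Given $\varepsilon>0$, Theorem~\ref{A} gives area$(J(H))=0$, so by outer regularity of Lebesgue measure we choose an open $V\supset J(H)$ with area$(V)<\varepsilon$. Applying the semicontinuity claim to the compact set $K=\Omega\setminus V$ gives $J(H^{(\ell)})\subset V$ for all sufficiently large $\ell$, whence area$(J(H^{(\ell)}))<\varepsilon$. Since $\varepsilon>0$ was arbitrary, area$(J(H^{(\ell)}))\to 0$ as $\ell\to\infty$.

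The main obstacle I anticipate is verifying the semicontinuity step rigorously near $\partial U$, where the infinite-to-one covering structure of $H\colon U\to D_*$ is only approximated by the finite-degree covers $H^{(\ell)}\colon U^{(\ell)}\to D_*^{(\ell)}$; a point $z$ close to $\partial U$ could conceivably fail to escape for $H$ yet lie in an attracting basin for $H^{(\ell)}$, or vice versa. This is handled by choosing $K$ a definite distance from $\partial U$ (absorbing the boundary layer into the open set $V$, whose area is already controlled by Theorem~\ref{A}) and invoking the explicit description of the tower dynamics in~\cite{LSw1,LSw2}.
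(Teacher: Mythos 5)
Your overall strategy is the same as the paper's: use Theorem~\ref{A}, compactness of $J(H)$ and outer regularity to fix an open $V\supset J(H)$ of small area, and then show $J(H^{(\ell)})\subset V$ for all large $\ell$ by propagating, through uniform convergence of finitely many iterates on compact sets, the fact that points off $V$ escape under the limit dynamics. The gap lies in what ``escape'' must mean and how it transfers to $H^{(\ell)}$. Non-membership in $J(H^{(\ell)})$ is a statement about the dynamics of $H^{(\ell)}$ on its own, $\ell$-dependent, domain of definition, and your criterion ``the $H$-orbit of $z$ exits $U$ into $D_*\setminus U$'' does not transfer: a point of $D_*\setminus U$, or a point slightly outside $\overline{\Omega_-\cup\Omega_+}$, may perfectly well lie inside the domain of the finite-degree map and even inside $J(H^{(\ell)})$, because the domains of the $H^{(\ell)}$ are a priori different from the limit domain. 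This is exactly where the paper does its real work: it invokes Epstein's maximal polynomial-like extension of $H^{(\ell)}$ to a domain $\Omega^{(\ell)}$ mapped onto a slit plane, notes that $\partial\Omega^{(\ell)}$ is invariant under $G_\ell^{-1}$ with $G_\ell=H^{(\ell)}\circ\tau_\ell^{-1}$, and uses the convergence $G_\ell^{-1}\to G^{-1}$ to conclude that $\partial\Omega^{(\ell)}$ converges uniformly to $\partial\Omega$. Only then does ``$H^j(w)$ lies outside $\overline{\Omega}$'' (outside the full maximal domain, not merely outside $U$) imply that $(H^{(\ell)})^j(w)$ lies outside $\Omega^{(\ell)}$ for large $\ell$, hence $w\notin J(H^{(\ell)})$. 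The same control of domains is also what underlies your unproved assertion that all $J(H^{(\ell)})$ lie in a common compact set.

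Your proposed remedy for the boundary problem, namely keeping $K$ a definite distance from $\partial U$ and ``absorbing the boundary layer into $V$'', does not repair this. First, $V$ has small area because it is a neighborhood of $J(H)$; a neighborhood of $\partial U$ is not automatically contained in it (one could add such a neighborhood of small area, since $\partial\Omega_{\pm}$ has Hausdorff dimension $1$ by Theorem~\ref{prop}, but that is beside the point). Second, and decisively, the problematic points are not those starting near $\partial U$: a point $z\in K$ far from $\partial U$ whose $H$-orbit first exits just beyond the boundary of the limit domain could have its $H^{(\ell)}$-orbit land just inside $\Omega^{(\ell)}$ and never escape, if $\Omega^{(\ell)}$ were slightly larger there; nothing in your argument excludes this, and the set of such $z$ carries no a priori area bound, so it cannot be swept into $V$. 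What is needed is precisely the convergence $\partial\Omega^{(\ell)}\to\partial\Omega$ (or equivalent control of the domains of the finite-order maps), which your proposal defers to ``the explicit description of the tower dynamics'' without supplying it.
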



Theorem~\ref{A} together with Theorem 7 of~\cite{LSw1} and 
~\cite{LSw2}
immediately imply

\begin{coro}\label{example} 
There exist real parameters $a,c>0$, such that the map
\[f(z)=a\exp{(-(z-c)^{-2})}\] 
has the following properties:

(a) $f$ is quasi-conformally conjugate to $H$ on the entire domain of $H$,

(b) the set of points in the plane whose $\omega$-limit sets under $f$
are contained in the $\omega$-limit set of $0$ has Hausdorff dimension $2$,

(c) the hyperbolic dimension of the Julia set $J(f)$
of $f$ is equal to $2$,

(d) the area of $J(f)$ is equal to zero.
\end{coro}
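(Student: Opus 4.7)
The plan is to deduce parts (a)--(d) directly from the quasi-conformal conjugacy furnished by Theorem 7 of~\cite{LSw1}, combined with Theorem~\ref{A} of the present paper and the dimension results of~\cite{LSw2}. Theorem 7 of~\cite{LSw1} supplies, at suitable real parameters $a,c>0$, a qc homeomorphism $\phi$ of $\CC$ conjugating $f:=f_{a,c}(z)=a\exp(-(z-c)^{-2})$ to the limit Feigenbaum fixed point $H$ on the entire domain of $H$. That is exactly part (a), and $\phi$ is the vehicle for transporting the remaining three properties to $f$.

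For part (d), I would use that planar qc maps are absolutely continuous with respect to Lebesgue measure (in both directions), so they send Lebesgue-null sets to Lebesgue-null sets. Theorem~\ref{A} gives $|J(H)|=0$, hence $|J(f)|=|\phi(J(H))|=0$. Part (b) is a reformulation of the corresponding statement for $H$ established in~\cite{LSw2}: the set of points whose $\omega$-limit sets are contained in $\omega(0)$ is preserved by topological conjugacy, so it equals the $\phi$-image of the analogous set for $H$, whose Hausdorff dimension is $2$ by~\cite{LSw2}; the dimension $2$ (the maximum in the plane) is preserved by the qc map via the conformal-density machinery used in~\cite{LSw2}. Part (c) follows likewise: the hyperbolic dimension of $J(H)$ equals $2$ by~\cite{LSw2}, and the value $2$ of hyperbolic dimension on planar dynamical repellers is qc-conjugacy invariant, since it is the zero of a pressure functional naturally attached to the invariant tower and transported by $\phi$.

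The only non-routine point is to make sure that $\phi$ captures all of $J(f)$, rather than merely the Feigenbaum-renormalisation piece. This is precisely the content of Theorem 7 of~\cite{LSw1}: the conjugacy is constructed on a global absorbing region for $f$, and its pre-images under iteration of $f$ exhaust every non-escaping orbit, so the transfer of measure- and dimension-theoretic data is complete. With this input in hand, Corollary~\ref{example} is an immediate combination of Theorem~\ref{A} and~\cite{LSw2} via the transfer arguments just sketched.
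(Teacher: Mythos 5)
Your proposal is correct and follows exactly the route the paper intends: the paper offers no detailed argument, stating only that the corollary follows immediately from Theorem~\ref{A}, Theorem 7 of~\cite{LSw1} (the quasi-conformal conjugacy giving (a)), and~\cite{LSw2} (the dimension statements (b),(c)), with (d) obtained by transporting the null set $J(H)$ through the quasi-conformal map, precisely as you do. Your added remarks on absolute continuity of planar qc maps and on the invariance of the value $2$ of (hyperbolic) dimension simply make explicit what the paper leaves implicit.
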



\section{Induced Dynamics}
We will build on~\cite{LSw2} adopting the notations of that paper. 

\subsection{Limit Feigenbaum map}

The following statement proved in~\cite{LSw1},~\cite{LSw2} describes a maximal
dynamical extension of the map $H: U\to D_*$ and related facts.

\begin{theo}\label{prop}

(1) On the interval $[0,1]$, $H^{(\ell)}$ converge uniformly to a unimodal
map $H$ with a critical point at $x_0$ which satisfies the Feigenbaum
fixed point equation~(\ref{equ:14fa,1}) with some $\tau > 1$. 

(2) There is an analytic map $h$ defined on the union of two open 
topological disks
$\Omega_- \ni 0$ and $\Omega_+$, both symmetric with respect to the
real axis with closures intersecting exactly at $x_0$. 

(3) $\Omega_+$ and $\Omega_-$ are bounded and their boundaries 
are Jordan curves with
Hausdorff dimension $1$. 
Moreover, $\overline\Omega_{\pm}\cap \RR=\overline{\Omega_{\pm}\cap \RR}$.

(4) $h$ is
univalent on $\Omega_-$ and maps it onto $C_h := \CC \setminus \{ x\in \RR
:\: x \geq 2\log\tau\}$ and also univalent on $\Omega_+$ mapping it
onto $\CC \setminus \{ x \in \RR :\: x \geq\log\tau\}$. 

(5) On any compact subset of $\Omega_+ \cup \Omega_-$, $H^{(\ell)}$ are
defined and analytic for all $\ell$ large enough and converge
uniformly to $H:=\exp(h)$, which is an analytic extension
of the map $H: U\to D_*$ previously introduced. 

(6) if $G=H\circ \tau^{-1}$, then $h\circ G=h-\log \tau$ on $\Omega_{\pm}$.
That is to say, the map $h/(-2\log \tau)$ is an attracting
Fatou coordinate for $G^2$ at $x_0$.
$G^{-1}(\Omega_+) = \Omega_-$ and $G^{-1}(\Omega_- \setminus [y,0]) =
\Omega_+$ where $G^{-1}$ 
is an inverse branch of $G$ defined on $\CC
\setminus \left( (-\infty,0] \cup [\tau x_0, + \infty)\right)$ which fixes
$x_0$ and $y<0$ is chosen so that
$G^2$ maps $(y, x_0)$ monotonically onto $(0, x_0)$. 

(7) $\Omega_+ \subset \tau \Omega_-$ and $\Omega_- \subset \tau \Omega_-$

\end{theo}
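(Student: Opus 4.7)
The statement is essentially a compendium of results from the authors' previous work~\cite{LSw1},~\cite{LSw2}, so my plan would be to reproduce the construction rather than to re-derive everything from scratch. The architecture is: first obtain the limit map $H$ on the interval, then construct the logarithmic coordinate $h$ via a Fatou-coordinate mechanism at the fixed point $x_0$, then extend $h$ to the maximal domains $\Omega_{\pm}$ using the functional equation~(\ref{equ:14fa,1}), and finally read off the consequences (univalence, convergence, inclusion).

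For part (1), I would invoke the standard a priori (real and complex) bounds for Feigenbaum fixed points $H^{(\ell)}$ of even integer order. Uniform bounds on the domains of analyticity together with uniform bounds on $\tau_{\ell}$ give precompactness in a suitable Montel sense; any limit satisfies~(\ref{equ:14fa,1}) with a limit scaling $\tau>1$, and the unimodal structure on $[0,1]$ passes to the limit. For parts (2) and (6), the key observation is that $G=H\circ\tau^{-1}$ fixes $x_0$ and, because of~(\ref{equ:14fa,1}), its second iterate $G^2$ has a degenerate fixed point of ``infinite order of tangency'' at $x_0$; the Feigenbaum equation rewritten as $h\circ G=h-\log\tau$ says that the holomorphic function $h$ we want to construct is, up to a constant factor, the attracting Fatou coordinate for $G^2$ at $x_0$. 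I would construct $h$ first on a petal based at $x_0$ by the classical Fatou-coordinate limit, then extend it by setting $h:=h\circ G+\log\tau$ on preimages, which is consistent and single-valued precisely because $G$ is univalent on the relevant inverse branches given in (6).

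For parts (3), (4) and (7), the analysis is geometric. The extension procedure stops where the inverse branch of $G$ develops a boundary; this produces the Jordan domains $\Omega_{\pm}$ with $\overline{\Omega_-}\cap\overline{\Omega_+}=\{x_0\}$ and $h$ univalent on each, mapping them onto the slit planes $C_h$ and $\CC\setminus[\log\tau,\infty)$ respectively (the two different slits encode the fact that $\Omega_+$ is one level deeper in the renormalization tower than $\Omega_-$). The Hausdorff dimension one statement for $\partial\Omega_{\pm}$ and the symmetry $\overline{\Omega_{\pm}\cap\RR}=\overline{\Omega_{\pm}}\cap\RR$ would be established by showing that the boundary consists of real-analytic arcs emanating from $x_0$, obtained as preimages of the slits under the Fatou coordinate, together with an analysis of the limit geometry at $x_0$ itself. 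The inclusion $\Omega_{\pm}\subset\tau\Omega_-$ in (7) is a direct translation of~(\ref{equ:14fa,1}): since $G^{-1}(\Omega_+)=\Omega_-$ and $\tau$ rescales the dynamics by one renormalization level, $\tau\Omega_-$ must contain both domains.

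For part (5), I would prove local uniform convergence $H^{(\ell)}\to H$ on compact subsets of $\Omega_+\cup\Omega_-$ by using that on the interval the convergence is known (as in (1)), then lifting to the complex plane: the functional equations for $H^{(\ell)}$ and $H$ define both sides via repeated pull-back by $\tau_{\ell}$ and $\tau$ respectively, and a normal-family argument (the $H^{(\ell)}$ avoid a definite region, hence form a normal family on each compact set in the domain of $H$) forces any limit of a subsequence to agree with $H$. The main obstacle, in my view, is the geometry of the boundary in part (3) and the precise shape of $\Omega_{\pm}$ near $x_0$, since the critical point is of infinite order and the Fatou coordinate there is only obtained as a rather delicate limit of finite-order objects; everything else is fairly mechanical once the Fatou coordinate $h$ is in hand.
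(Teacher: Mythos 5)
First, a point of comparison: the paper itself offers no proof of Theorem~\ref{prop}. It is introduced with ``The following statement proved in~\cite{LSw1},~\cite{LSw2}\dots'' and is used as imported background, so there is no in-paper argument to measure your sketch against; it has to be judged as a reconstruction of the cited construction. At the level of architecture (limit on the interval, a logarithmic coordinate $h$, extension via the functional equation~(\ref{equ:14fa,1}), then the geometric consequences) your outline is broadly compatible with what those references do, but it contains one outright error and leaves the genuinely hard content unaddressed.

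The error: you base the construction of $h$ on the claim that $G^2$ has a degenerate fixed point of ``infinite order of tangency'' at $x_0$. That is impossible for an analytic germ (tangency to the identity to infinite order forces $G^2=\mathrm{id}$ near $x_0$), and it is also not what happens here: the present paper, in the proof of Lemma~\ref{lem:24fp,5}, works with the expansion $G^2(z)=z-A(z-x_0)^3+\cdots$, $A>0$, i.e.\ a parabolic point of cubic tangency whose two attracting petals are precisely $\Omega_\pm$; the ``infinite'' degeneracy lives in the flatness of $H$ at its critical point (the $a\exp(-(z-c)^{-2})$ model of Corollary~\ref{example}), not in the germ of $G^2$. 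Getting this order right is not cosmetic, since the $(z-x_0)^{-2}$ Fatou asymptotics are exactly what later produce the $|n|^{-2}$ transition probabilities. Beyond that, the substantive parts of the theorem are waved through: precompactness plus ``any limit satisfies the equation'' does not rule out degenerate limits, does not give $\tau_\ell\to\tau>1$, and does not yield the global statement (4) --- univalence of $h$ on all of $\Omega_-$ onto exactly $\CC\setminus[2\log\tau,+\infty)$ --- because Fatou-coordinate theory only furnishes $h$ on petals and up to an additive constant. Your plan is also partly circular: to define $G$ and its parabolic germ at $x_0$ you already need the complex extension of the limit map, whereas in~\cite{LSw1} the extension $H=\exp(h)$ is obtained as the limit of the extensions of the finite-order fixed points $H^{(\ell)}=|E_\ell|^{\ell}$ (with $h$ arising as the limit of $\ell\log|E_\ell|$), and only then is the Fatou-coordinate identity $h\circ G=h-\log\tau$ read off. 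Similarly, the Jordan-curve and Hausdorff-dimension-one assertions of (3) are results of~\cite{LSw2} requiring their own argument, not consequences of the boundary being made of real-analytic arcs near $x_0$.
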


\begin{figure}\label{fig:16fa,1}
\psfig{figure=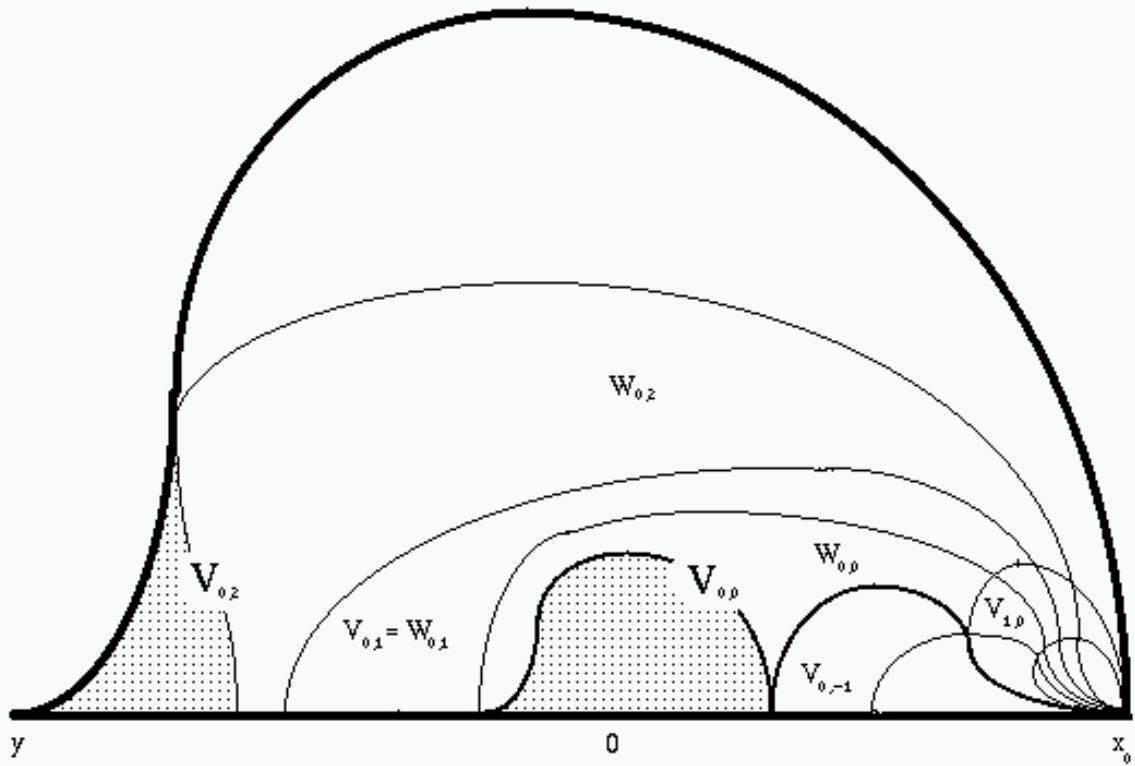,width=528pt,height=396pt}
\caption{This is a schematic drawing of $\Omega_- \cap \HH_+$ and regions
inside it. Areas delineated with thicker lines represent
$\tau^{-1}\Omega_-$ and $\tau^{-1}\Omega_+$. Shaded areas correspond
under $G^2$.}
\end{figure}

\paragraph{The geometry of $H$.}
See Figure~\ref{fig:16fa,1} for an illustration and explanation of
some notations. 

Let us define $B = \Omega_- \setminus \tau^{-1}\overline{\Omega_-}$
and $B_{\pm} := B \cap \HH_{\pm}$. Then define $D_{\pm} = B_{\pm} \setminus
\tau^{-1}\overline{\Omega_+}$. 

A convenient parametrization of the set $\Omega_-$ is given 
by the map $h^{-1}$
from a slit plane $C_h$, as described by item~(4) of Theorem~\ref{prop}. If
we write $w=h(z)$, then the map $H(z)$ corresponds to $\exp(w)$ and,
more strikingly $z\rightarrow G^2(z)$ is conjugated to $w\rightarrow w -
2\log \tau$. Geometrically, it is worth noting that the beginning of
the slit at $w=2\log\tau$ corresponds to the point $y$ in
Figure~\ref{fig:16fa,1} where the boundaries of $\Omega_+$ and
$\Omega_-$ which follow the real line to the right of $y$, split.  
 
Following~\cite{LSw2}, connected sets $V_{k,k'}$, $k,k'\in \ZZ$,  
are chosen in $\Omega_-$ so that 
each is mapped by $H$  
onto $\tau^{k'+1}B_{\pm}$.

More explicitly, in the
$w$-coordinate 
\[ h(V_{k,k'}) = \{ w\in C_h :\: \exp(w-k'\log\tau) \in \tau B_{\pm},\, k\pi < \Im
w < (k+1)\pi \} \; .\]

Now $\tau^{-1}\Omega_- \subset \overline{V_{0,0} \cup V_{-1,0}}$. 
Hence, for $k=0,-1$
and $k'$ even and non-negative, $V_{k,k'}$ contains the preimage of
$\tau^{-1}\Omega_{-}$ by $G^{k'}$. To exclude this preimage, if $k=0$ or $k=-1$
and $k'$ is even end non-negative, we define  $W_{k,k'} = V_{k,k'} \setminus
G^{-k'}(\tau^{-1}\Omega_-)$. For all other pairs $(k,k')$, $W_{k,k'} = V_{k,k'}$.

\paragraph{Rescaled map.}
Let us define the ``rescaled map'' $\tilde{H}$ as follows. 

\begin{itemize}
\item
if $z\in W_{k,k'}$, $(k,k') \in \ZZ \times \ZZ$, then $\tilde{H}(z) =
\tau^{-k'-1}H(z)$, 
\item
if $z\in V_{k,k'} \setminus W_{k,k'}$, $(k,k') \in \{0,-1\} \times
2\ZZ_+$, 


then first consider $Z = G^{k'}(z)$. $Z\in
\overline{\tau^{-1}\Omega_-}$ which consists of the rescaled copies of
$B_{\pm}$. Then $\tilde{H}(z)$ is defined if and only if
$Z\in\tau^{-p}B_{\pm}$ for some $p>0$ and then $\tilde{H}(z)=\tau^p Z$.   
\end{itemize}

Notice that this definition ensures that $\tilde{H}$ maps every
connected component of its domain univalently onto one of the four possible
pieces $D_{\pm}, B_{\pm}$. The image is $B_{\pm}$ in the
second case of the definition of the rescaled map and also in the
first case whenever $W_{k,k'}=V_{k,k'}$. 

On the other hand $\tilde{H}$ on $W_{0,0}$ is 
$\tau^{-1} H = \tau^{-1} G \tau$. It maps $W_{0,0}$
univalently onto $D_-$ since $G$ maps $\Omega_-$ onto $\Omega_+$. 
On $W_{-1,0}$,
$\tilde{H}$ is the mirror reflection of this map, so the same formula
actually holds. We will refer to
$W_{0,0}, W_{-1,0}$ as the {\em central pieces}. 

On $W_{2p,0}$, $p>0$, $\tilde{H}=\tilde{H}_{|W_{0,0}} \circ G^{2p}$,
so it also maps onto $D_-$ and similarly $W_{-1,2p}$ is mapped onto
$D_+$.  

Distortion properties of $\tilde{H}$ are given by Proposition 1
of~\cite{LSw2}. As it turns out, most branches of $\tilde{H}^n$ can be
be continued as univalent maps onto fixed neighborhoods of $B_{\pm},
D_{\pm}$, fixed meaning independent of a branch or $n$, with the
exception of those branches whose domains are send to the central
pieces by $\tilde{H}^{n-1}$. 

\paragraph{Towers.}
The following is a trivial application of the concept of a tower
used in~\cite{profesorus}.  

\begin{defi}\label{defi:20ga,1}
Suppose we have a pair $(H,\tau)$ which satisfies the
equation~\ref{equ:14fa,1}.  For every $k\in \ZZ$, $H$ gives rise to a
rescaled mapping $H_k(z) = \tau^k H (\tau^{-k} z)$. The set $\{ H_k
:\: k\in\ZZ \}$ will be called  the {\em tower of $H$}.
The set of all possible compositions of maps from a tower will be
referred to as {\em tower dynamics}. 
\end{defi}

Towers will be used when $H$ could be the limiting map discussed in
the previous paragraph, or one of the fixed point transformations $H^{(\ell)}$ of
finite degree.

Tower dynamics forms a
dynamical system, namely it defines an action of the semi-group of 
non-negative binary
rational numbers under which integers correspond to ordinary iterates
of $H$ and $2^{-k}$ acts as $H_k$. This follows from the following
lemma.

\begin{lem}\label{lem:20ga,1}
For every $k\in\ZZ$, $H_k^2 = H_{k-1}$. 
\end{lem}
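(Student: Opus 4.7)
The plan is to prove this by a direct computation from the definition $H_k(z)=\tau^k H(\tau^{-k}z)$, using the Feigenbaum functional equation~(\ref{equ:14fa,1}) exactly once. Writing conjugation by the scaling $z\mapsto \tau^k z$ multiplicatively, $H_k=\tau^k\circ H\circ\tau^{-k}$, we get
\[
H_k^2 \;=\; (\tau^k\circ H\circ\tau^{-k})\circ(\tau^k\circ H\circ\tau^{-k}) \;=\; \tau^k\circ H^2\circ\tau^{-k},
\]
since the middle pair $\tau^{-k}\circ\tau^k$ cancels. The remaining task is to rewrite $\tau^k\circ H^2\circ\tau^{-k}$ as $H_{k-1}=\tau^{k-1}\circ H\circ\tau^{-(k-1)}$.

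For this I would rearrange the fixed point equation $\tau H^2\tau^{-1}=H$ as $H^2=\tau^{-1}\circ H\circ\tau$, substitute into the expression for $H_k^2$, and collect the scalings:
\[
H_k^2 \;=\; \tau^k\circ(\tau^{-1}\circ H\circ\tau)\circ\tau^{-k} \;=\; \tau^{k-1}\circ H\circ\tau^{-(k-1)} \;=\; H_{k-1}.
\]
This would complete the proof.

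There is essentially no obstacle beyond bookkeeping. The only point that might deserve a remark is that the identity is understood on the appropriate domain: the Feigenbaum equation~(\ref{equ:14fa,1}) was stated for $x\in[0,1]$, but by part~(5) of Theorem~\ref{prop} the map $H$ extends analytically to $\Omega_+\cup\Omega_-$, and the identity $H=\tau H^2\tau^{-1}$ then persists on the common domain of analyticity by the identity principle. Hence the equality $H_k^2=H_{k-1}$ holds wherever both sides are defined, which is the natural sense in which the tower dynamics is considered.
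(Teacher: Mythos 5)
Your computation is correct and is essentially identical to the paper's one-line proof: both cancel the inner scalings to get $H_k^2=\tau^k H^2\tau^{-k}$ and then apply the Feigenbaum equation~(\ref{equ:14fa,1}) once, regrouped as $\tau^{k-1}(\tau H^2\tau^{-1})\tau^{-(k-1)}=H_{k-1}$. The added remark about domains of analyticity is harmless and consistent with Theorem~\ref{prop}.
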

\begin{proof}
Based on the functional equation~\ref{equ:14fa,1}, 
\[ H_k \circ H_k(x) = \tau^{k-1} \tau H^2 (\tau^{-1}(\tau^{-k+1}x)) = 
\tau^{k-1} H(\tau^{-k+1}x) = H_{k-1}(x) \; .\]
\end{proof}

\subsection{Further inducing}
Map $\tilde{H}$ has satisfactory properties from the
combinatorial point view, since $B_+$ and $B_-$ are cut into countably
many topological disks, each is of which is mapped univalently back
onto $B_+$ or $B_-$. However, we would like it to have bounded
distortion and that is generally not so. A standard approach to
obtaining bounded distortion is by inducing and will follow that route
now.

Start by introducing new pieces $K_{\pm} = B_{\pm}\setminus
\overline{W_{0,0}\cup W_{-1,0}}$ and $L_{\pm} = D_{\pm} \setminus
\overline{W_{0,0} \cup W_{-1,0}}$. 

We will next define the map $\tilde{C}$ almost everywhere on the union
of the central pieces, induced by $\tilde{H} = \tau^{-1} G \tau$,
for which every branch maps on $L_{\pm}$. This will allow us to build
the map $\tilde{J}$ defined on the domain of $\tilde{H}$ by
$\tilde{H}$ except on $\tilde{H}^{-1}(W_{0,0}\cup W_{-1,0})$ and by
$\tilde{C} \circ \tilde{H}$ otherwise.

\paragraph{The mapping $\tilde{C}$.}
We will only consider $\tilde{C}$ on $W_{0,0}$. Mapping on $W_{-1,0}$
will be the mirror reflection. 

By Lemma 2.13 in~\cite{LSw2}, no point will stay forever in $W_{0,0}$
under the iteration by $\tilde{H}$. Hence, $\tilde{C}$ is simply
defined as the first entry map into $L_{+}$ under the iteration by
$\tilde{H}$. 

\begin{lem}\label{lem:22xp,1}
Every branch of $\tilde{C}$ has a univalent extension onto a simply
connected neighborhood $U_L$ of $L_{+}$. $U$ is the same for all
branches of $\tilde{C}$ and its preimages by any branch is contained
in the set $S := \{ z :\: \Re z < 0\; \mbox{or}\; \Im z > 0\}$. 
\end{lem}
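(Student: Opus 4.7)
The plan is to build each branch of $\tilde{C}$ as a finite composition of inverse branches of $\tilde{H}$, extend each factor univalently (using Proposition~1 of~\cite{LSw2} on non-central steps and Theorem~\ref{prop}(6) on central-piece steps), and then verify the geometric confinement in $S$.

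Combinatorially, a branch of $\tilde{C}$ corresponds to a finite orbit $W_{0,0}\to W_{k_{1},k'_{1}}\to\cdots\to W_{k_{n-1},k'_{n-1}}\to L_{+}$ whose first visit to $L_{+}$ occurs at step $n$; finiteness of $n$ for almost every starting point is furnished by Lemma~2.13 of~\cite{LSw2}. On each step from a non-central $W_{k_{j},k'_{j}}$, Proposition~1 of~\cite{LSw2} gives a univalent extension of the corresponding inverse branch of $\tilde{H}$ onto a fixed neighborhood of the relevant $B_{\pm}$ or $D_{\pm}$; on each step from a central piece, $\tilde{H}$ equals $\tau^{-1}G\tau$ and its inverse extends univalently onto a rescaled slit plane by Theorem~\ref{prop}(6). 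Because $\overline{L_{+}}$ is compactly contained in $D_{+}\setminus\overline{W_{0,0}\cup W_{-1,0}}$, I can choose a simply connected $U_{L}\supset\overline{L_{+}}$ inside these fixed extension domains and bounded away from the central pieces. Successive composition of the extended inverse branches then produces a univalent extension of every branch of $\tilde{C}$ onto the common neighborhood $U_{L}$.

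For the preimage containment in $S$, the ``unextended'' branch domains lie in $W_{0,0}\subset\HH_{+}\subset S$ because, in the real-symmetric $w$-coordinate of Theorem~\ref{prop}(4), $V_{0,0}$ corresponds to $\{0<\Im w<\pi\}$. Choosing $U_{L}$ sufficiently close to $L_{+}$, Koebe-type distortion keeps the extended domains in a small neighborhood of the originals, and the symmetry of $\Omega_{-}$ across the real axis together with the tangency of $\Omega_{\pm}$ at $x_{0}>0$ forces any crossing of the real axis by the extended domain to happen at a point with $\Re z<0$, which still lies inside $S$.

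The most delicate part is this last step: ruling out that the univalent extension reaches into the closed fourth quadrant. It uses the slit-plane structure from Theorem~\ref{prop}(6) (the excluded rays $(-\infty,0]$ and $[\tau x_{0},+\infty)$ are precisely what prevents the extended inverse of $\tilde{H}$ on a central piece from wrapping around $x_{0}$ into the fourth quadrant), combined with the real symmetry of the setup and the precise location of $\partial W_{0,0}\cap\RR$. This geometric bookkeeping, rather than the existence of the extension, is where the specific Feigenbaum setup must be used carefully.
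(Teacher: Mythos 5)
There is a genuine gap, and it sits exactly at the point you yourself flag as delicate. First, a structural issue: a branch of $\tilde{C}$ is not a composition of branches of the piecewise map $\tilde{H}$ passing through general pieces $W_{k_j,k'_j}$. By construction $\tilde{C}$ is induced by the single analytic map $\tau^{-1}G\tau$ (the formula for $\tilde{H}$ on the central pieces), and the intermediate iterates stay in $W_{0,0}\cup W_{-1,0}$ until the first entry into $L_{+}$ (this is how $\tilde C$ is used later, e.g.\ in Lemma~\ref{lem:2jp,1} and Lemma~\ref{lem:24fp,2}); so Proposition 1 of \cite{LSw2}, which you invoke for ``non-central steps'', is not the relevant tool here --- every step is a pull-back by the same map $\tau^{-1}G\tau$, whose inverse branch is supplied by Theorem~\ref{prop}(6). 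Second, and more seriously, your argument for the containment of the preimages in $S$ does not close. The claim that $\overline{L_{+}}$ is compactly contained in $D_{+}\setminus\overline{W_{0,0}\cup W_{-1,0}}$ is false: that set \emph{is} $L_{+}$, and $\overline{L_{+}}$ touches the positive real axis at $x_0$ --- this is precisely the delicate point. And ``Koebe-type distortion keeps the extended domains in a small neighborhood of the originals'' cannot give containment in $S$: the domains of deep branches accumulate on the positive real axis (near the parabolic point of $(\tau^{-1}G\tau)^2$ and near $\partial W_{0,0}\cap\RR_+$), so a distortion-controlled thickening is not automatically disjoint from the closed fourth quadrant, and no uniform quantitative comparison between the thickening and the distance to $\RR_+$ is provided.

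The mechanism that actually proves the lemma is dynamical, not metric: (a) the preimage of $\overline{L_{+}}$ under $\tau^{-1}G\tau$ meets the real line only at the negative point $y$, so a sufficiently thin neighborhood $U_L$ of $L_{+}$ can be chosen whose pull-back by the first inverse branch already lies in $S$; and (b) the inverse branch of $\tau^{-1}G\tau$ coming from Theorem~\ref{prop}(6) maps $S$ into itself (in fact essentially into the upper half plane), so the containment propagates by induction to the branches of $\tilde{C}$ of all lengths, and univalence of the extension over the single fixed $U_L$ comes along for free since each pull-back is by a globally defined inverse branch on a domain containing $S$. Your proposal gestures at the slit-plane structure of $G^{-1}$ but uses it only to extend individual factors, never to establish the forward invariance of $S$ under the inverse branch; without that invariance (or a substitute), the ``geometric bookkeeping'' you defer to is exactly the missing proof.
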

\begin{proof}
Since $\tilde{H}$ on $W_{0,0}$ is $G_{-1} = \tau^{-1} G \tau$ 
and $\overline{L_+}$
only intersects $\RR_+$ at $x_0$ which is not a critical
point of $\tilde{H}$, we can define and inverse branch on a
neighborhood of $L_+$. Since the preimage of $\overline{L_+}$ by
$G_{-1}$ now only
intersects the real line at $y$, that neighborhood $U_L$ can be chosen to
fit into $S$. This proved the needed extension for the branch of
$\tilde{C}$ which is the first iterate of $\tilde{H}$. To examine
further branches, continue mapping by the inverse branch of
$\tilde{H}$ defined on $S$. From the properties of $G$, that inverse
branch  sends $S$ into itself, or even into the upper half plane.  
\end{proof}

\paragraph{Bounded distortion for $\tilde{J}$.}
Now define map $\tilde{J}$ which equals $\tilde{H}$
everywhere on the domain of $\tilde{H}$ except on preimages to the central pieces
and $\tilde{C}\circ \tilde{H}$ on such preimages. 

Each branch of $\tilde{J}$ maps onto one of the pieces $K_{\pm},
L_{\pm}$.  

\begin{prop}\label{prop:22xp,1}
There are fixed neighborhoods of sets $\overline{K_{\pm}}$ and
$\overline{L_{\pm}}$ such that for any $n$ any branch of $\tilde{J}^n$
which maps onto one those sets can also be extended univalently so
that it maps onto the corresponding neighborhood.
\end{prop}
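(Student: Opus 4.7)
The plan is to proceed by induction on $n$, using Proposition~1 of \cite{LSw2} for $\tilde{H}$-branches and Lemma~\ref{lem:22xp,1} for $\tilde{C}$-branches as the base case, and composing extended inverse branches for the inductive step. The structural fact that makes this close up is built into the definition of $\tilde{J}$: every branch has image $K_\pm$ or $L_\pm$, never inside the central pieces $W_{0,0}\cup W_{-1,0}$, since $\tilde{C}$ was introduced precisely to absorb itineraries that would otherwise land there. This removes the exception in Proposition~1 of \cite{LSw2} that would otherwise block the same statement for $\tilde{H}^n$.

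Concretely, I would fix the neighborhoods as follows. Let $U_K$ be a neighborhood of $\overline{K_\pm}$ contained in the fixed neighborhood of $\overline{B_\pm}$ supplied by Proposition~1 of \cite{LSw2} and chosen disjoint from $\overline{W_{0,0}\cup W_{-1,0}}$; let $U_L\subset U_K$ be a neighborhood of $\overline{L_\pm}$ sitting inside the neighborhood from Lemma~\ref{lem:22xp,1}. For $n=1$, a branch of $\tilde{J}$ is either a branch of $\tilde{H}$ with image $K_\pm$ extending univalently onto $U_K$ by Proposition~1 of \cite{LSw2}, or a composition $\tilde{C}\circ\tilde{H}$ with image $L_\pm$ extending onto $U_L$ by combining Proposition~1 of \cite{LSw2} with Lemma~\ref{lem:22xp,1}. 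For the inductive step, factor a branch of $\tilde{J}^n$ as $g_n\circ\cdots\circ g_1$, each $g_i$ a branch of $\tilde{J}$ with Markov domain $P_i$, and pull back $U_K$ or $U_L$ through the base-case extensions $\tilde{g}_n^{-1},\ldots,\tilde{g}_1^{-1}$ in turn.

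The main obstacle is verifying that each successive pull-back stays in the fixed neighborhood on which the next extended inverse branch is defined. For $i\ge 2$, the piece $P_i$ lies inside the image of $g_{i-1}$, hence inside $K_\pm$ or $L_\pm$, which is contained in $U_K$ or $U_L$; by selecting the base-case extended domain $\tilde{P}_i$ as a sufficiently thin neighborhood of $P_i$ (permitted because the base-case extensions have bounded distortion on compact subsets of their targets), one keeps $\tilde{P}_i\subset U_K$ or $U_L$, and the composition of extended inverse branches remains univalent at every step. The piece $P_1$ may be a central piece, but this is the terminal extended domain of the whole branch and is not pulled back any further.
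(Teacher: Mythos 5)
There is a genuine gap, and it sits exactly where you locate "the main obstacle". Your inductive scheme needs, at every step, that the pull-back of the fixed target neighborhood by the extended inverse of one $\tilde{J}$-branch lands inside the domain of the next extended inverse branch, uniformly over the countably many branches and over $n$. The fix you offer --- shrinking the extended domain $\tilde{P}_i$ of each factor --- does not close this: if you shrink the last factor's extension you no longer map onto a \emph{fixed} neighborhood of $\overline{K_{\pm}}$ or $\overline{L_{\pm}}$, and if you shrink intermediate factors you must then check that the successive pull-backs fit inside these branch-dependent shrunken images, which is the same unverified nesting statement one level down. What would be needed is a single neighborhood $U$ with $g^{-1}(U)\subset U$ for every branch $g$, and this is not automatic: the branches of $\tilde{J}$ are not uniformly expanding (large pieces of $K_{\pm}$ can be mapped onto the smaller $L_{\pm}$), the pieces accumulate at the parabolic point $x_0$ where distortion degenerates, and "bounded distortion on compact subsets of the target" gives a multiplicative, not an absolute, bound on how far the pull-back of $U$ protrudes beyond the piece.

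The second, more specific, problem is the treatment of the $\tilde{C}$-factors. Lemma~\ref{lem:22xp,1} only places the preimage of $U_L$ under a branch of $\tilde{C}$ inside the set $S=\{\Re z<0\ \mbox{or}\ \Im z>0\}$, which is a half-plane-type region, not inside any fixed small neighborhood of $\overline{B_{\pm}}$ or of the central pieces; moreover the $\tilde{H}$-branches that are followed by $\tilde{C}$ are precisely those mapping onto $W_{0,0}\cup W_{-1,0}$, i.e.\ the exceptional case excluded from the fixed-neighborhood statement of Proposition 1 of \cite{LSw2}. So already your base case $n=1$ for a branch of the form $\tilde{C}\circ\tilde{H}$ is not covered by the two ingredients you cite. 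The paper's proof avoids both difficulties by not inducting at all: since $\tilde{C}$ is itself induced by $\tilde{H}$, a branch of $\tilde{J}^n$ is a single branch of an iterate of $\tilde{H}$, and one argues on the whole word at once. If the last factor is $\tilde{H}$, Proposition 1 of \cite{LSw2} gives the extension directly; if the last factor is $\tilde{C}$, Lemma 2.10 of \cite{LSw2} extends the \emph{entire} preceding composition univalently onto the slit plane $\CC\setminus[0,+\infty)$, which contains $S$, and then Lemma~\ref{lem:22xp,1} lets one append the last $\tilde{C}$-branch. That global extension over the slit plane is the ingredient your argument is missing and cannot be replaced by step-by-step Koebe bookkeeping without proving a uniform invariance property that the paper never needs.
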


The map $\tilde{J}^n$ can be expended as a composition of $\tilde{H}$
and $\tilde{C}$ in which $\tilde{C}$ cannot be followed by another
$\tilde{C}$. Since $\tilde{C}$ is also induced by $\tilde{H}$, we can
use Proposition 1 from~\cite{LSw2}.  It asserts that if $\tilde{H}$ is
the last mapping applied in this composition, then the claim of
Proposition~\ref{prop:22xp,1} holds. So consider the situation when
$\tilde{C}$ is applied last. By Lemma 2.10 from~\cite{LSw2}, the
entire composition that comes before it can be continued so that it
maps onto $\CC \setminus [0,+\infty)$. This obviously contains the set
$S$ mentioned in Lemma~\ref{lem:22xp,1}, so again we get a univalent
extension mapping over a fixed neighborhood of $\overline{L_{\pm}}$. 

This proves Proposition~\ref{prop:22xp,1}. 

By K\"{o}be's Lemma we now know that all branch of $\tilde{J}^n$ have
distortion bounded uniformly with respect to $n$.

\subsection{Tower Dynamics}
By Lemma 2.14 from~\cite{LSw2}, for every branch $\zeta$ of $\tilde{H}$ there
exists an integer $p$ such that $\tau^p \zeta$ belongs to the tower of
$H$, see Definition~\ref{defi:20ga,1}. Let us call $p$ the {\em combinatorial displacement} of $\zeta$. 

This leads to the following definition. 

\begin{defi}\label{defi:2ja,1}
A mapping defined on an open set contained in the fundamental ring 
$\Omega_- \setminus \tau^{-1}\overline{\Omega_-}$ is called {\em
tower-induced} if on each connected component of its domain it has the
form $\tau^{q} h$ where $q$ is an integer and $h$ belongs to the tower
of $H$. 
\end{defi}

For a tower-induced mapping, the choice of $q$ and $h$ is unique. 
\begin{lem}\label{lem:2ja,1}
If $\tau^q h = \tau^{q'} h'$ on a connected open set, with
$q,q'\in\ZZ$ and $h,h'$ in the tower of $H$, then $q=q'$ and $h=h'$. 
\end{lem}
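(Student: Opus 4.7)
The plan is to reduce the identity $\tau^q h = \tau^{q'} h'$ to a functional relation on $H$ alone, and then obtain a contradiction unless the indices match, using the Feigenbaum equation together with the non-injectivity of $H$. Writing $h = H_k$, $h' = H_{k'}$ with $H_k(z) = \tau^k H(\tau^{-k} z)$, setting $w = \tau^{-k}z$, and $m = k - k'$ (without loss of generality $m \geq 0$, otherwise swap the primed and unprimed labels), the hypothesis rewrites as
\[
H(\tau^m w) = \tau^c H(w), \qquad c = (q-q') + m \in \ZZ,
\]
initially on a nonempty open set and then, by analytic continuation inside the connected component of the common domain, wherever both sides are defined. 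If $m = 0$, this immediately forces $\tau^c = 1$, and since $\tau > 1$ we conclude $c = 0$, i.e.\ $q = q'$ and $h = h'$, as desired.

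Suppose for contradiction that $m \geq 1$. Substituting $x = \tau w$ in~(\ref{equ:14fa,1}) gives $H(\tau w) = \tau H^2(w)$; a direct induction on $m$ (equivalently, unwinding Lemma~\ref{lem:20ga,1} iterated $m$ times) then yields
\[
H(\tau^m w) = \tau^m H^{2^m}(w).
\]
Combining with the previous display gives $H^{2^m}(w) = \tau^{c-m} H(w)$. Setting $u = H(w)$ and using the open mapping theorem for the nonconstant analytic map $H$, $u$ ranges over an open set on which
\[
H^N(u) = \lambda u, \qquad N = 2^m - 1 \geq 1, \qquad \lambda = \tau^{c-m} > 0,
\]
and by analytic continuation this persists on a connected neighborhood in the natural domain of $H^N$.

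The final step is to rule out this linear relation. By Theorem~\ref{prop}, $H : U_- \to D_*$ is an \emph{infinite} unbranched cover, so every value in $D_*$ has infinitely many distinct preimages in the connected topological disk $U_-$. Choosing two such preimages $u_1 \neq u_2$ in the domain to which $H^N(u) = \lambda u$ propagates, with $H(u_1) = H(u_2)$, we deduce $H^N(u_1) = H^N(u_2)$, contradicting the injectivity of the scalar map $u \mapsto \lambda u$ (since $\lambda \neq 0$). The one delicate point is confirming that the analytic continuation of $H^N(u) = \lambda u$ reaches a domain containing such a pair $u_1, u_2$; the explicit description of $H: U_{\pm} \to D_*$ and of $\Omega_\pm$ in Theorem~\ref{prop} makes this routine but requires careful domain-chasing, and is the only step beyond formal manipulation.
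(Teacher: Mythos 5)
Your reduction is sound and is in substance the paper's own argument: the paper writes $h$ and $h'$ as iterates of a common tower element using $H_k^2=H_{k-1}$ (which is exactly the Feigenbaum relation you invoke) and cancels, arriving, as you do, at the statement that some positive iterate of $H$ coincides with a linear map $u\mapsto\lambda u$, $\lambda=\tau^{c-m}>0$, on a nonempty open set; the case $m=0$ is disposed of identically in both arguments. Your computation $H(\tau^m w)=\tau^m H^{2^m}(w)$ and the passage to $H^N(u)=\lambda u$ with $N=2^m-1$ are correct, and the implicit domain bookkeeping does work out, using item (7) of Theorem~\ref{prop} and the fact that item (6) keeps the intermediate iterates inside $\Omega_-\cup\Omega_+$.

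The gap is in your final step, which is precisely the point the paper settles with the one-line assertion that no iterate of the tower map is linear. By the identity theorem, $H^N(u)=\lambda u$ propagates only through the connected component $\mathcal{C}$ of the domain of $H^N$ containing your initial open set $H(W)$; you are not free to ``choose'' the preimages $u_1\neq u_2$ with $H(u_1)=H(u_2)$ anywhere in $U_-$ --- both must lie in $\mathcal{C}$. Since an unbranched covering maps each component of the preimage of a simply connected set injectively, a component such as $\mathcal{C}$ need contain no pair of points with equal $H$-image at all, so the ``careful domain-chasing'' you defer is not a routine afterthought: it is the entire content of the step, and the plan as stated may simply fail. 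One way to close it inside your framework is to stop one step earlier: the relation $H(\tau^m w)=\tau^c H(w)$ has both sides defined on $\Omega\cap\tau^{-m}\Omega=\tau^{-m}\Omega_-\cup\tau^{-m}\Omega_+$ (by item (7)), and if your open set lies in the component $\tau^{-m}\Omega_-$, which contains the fixed point $0$ of the rescaling, then evaluating at $w=0$ (where $H(0)=\exp h(0)\neq 0$) forces $\tau^c=1$, after which $h(\tau^m w)=h(w)$ near $0$ and univalence of $h$ give $\tau^m w=w$, contradicting $m\geq 1$; the component $\tau^{-m}\Omega_+$ requires a separate, though similar, treatment. In any case some global input of this kind must be supplied: the covering-degree count, confined to $\mathcal{C}$, does not by itself yield the contradiction.
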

\begin{proof}
Both $h$ and $h'$ are iterates of the same $h_0$ in the tower, say 
$h=h_0^m, h'=h_0^{m'}$, $m,m' > 0$. Without loss of generality, $m'
\geq m$. Then 
\[ \tau^{q-q'} = h_0^{m'-m} \]
on an open set, but this is impossible given that no iterate of $h_0$ is a
linear map. 
\end{proof}

\begin{defi}\label{defi:21ga,1}
Given a tower-induced map $\Phi$  on a subset $U$ of the fundamental
ring, we can define its {\em associated map} as follows. 
On $U$, wherever $\Phi = \tau^q h$, the associated map is just $h$. 
On $\tau^p U$, where $p\in \ZZ$, the associated map is $\tau^p h
\tau^{-p}$. 
\end{defi}

In this way, the associated map belongs to the tower.   

\begin{lem}\label{lem:2ja,2}
If the combinatorial displacement of a tower induced map is $q$ at
some point $x$, then for any $p\in \ZZ$ the associated map sends
$\tau^p x$ in $\tau^{p+q} (\Omega_- \setminus
\tau^{-1}\overline{\Omega_-})$. 
\end{lem}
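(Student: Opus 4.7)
The statement will follow directly by unwinding the definitions. My plan is to write out the associated map at $\tau^p x$ and check where it lands among the rescaled fundamental rings $\tau^n B$, where $B := \Omega_- \setminus \tau^{-1}\overline{\Omega_-}$.

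By Definition~\ref{defi:21ga,1} the associated map of $\Phi$ on $\tau^p U$ is $z \mapsto \tau^p h(\tau^{-p} z)$, where $h$ is the unique tower element attached to $\Phi$ near $x$ (uniqueness is Lemma~\ref{lem:2ja,1}). Evaluating at $\tau^p x$ gives $\tau^p h(x)$, so the claimed inclusion $\tau^p h(x) \in \tau^{p+q} B$ reduces to showing $h(x) \in \tau^q B$.

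Two ingredients give this. First, the combinatorial-displacement convention from the paragraph preceding Definition~\ref{defi:2ja,1} (namely, $\tau^q \zeta = h$ defines the displacement $q$ for a branch $\zeta$) yields $\tau^q \Phi(x) = h(x)$. Second, in the context of this paper every tower-induced map takes values in the fundamental ring $B$; this is how $\tilde{H}$ and $\tilde{J}$ were constructed, as maps whose branches cover $B_\pm$ and $D_\pm \subset B$. Combining these, $h(x) = \tau^q \Phi(x) \in \tau^q B$, so $\tau^p h(x) \in \tau^{p+q} B$.

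There is no substantive obstacle; the computation is pure bookkeeping. The one detail worth watching is the sign convention for the displacement: Definition~\ref{defi:2ja,1} writes $\Phi = \tau^q h$ with an exponent that is the negative of the combinatorial displacement used in the statement. Once that convention is pinned down, the content of the lemma is simply that rescaling the domain by $\tau^p$ shifts the level (in the sense of the sequence $\{\tau^n B\}_{n \in \ZZ}$) of the image by exactly $p$, on top of $\Phi$'s intrinsic displacement $q$.
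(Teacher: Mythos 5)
Your proof is correct and takes essentially the same route as the paper, whose entire argument is that the lemma follows directly from the definitions; your computation (associated map at $\tau^p x$ equals $\tau^p h(x)=\tau^{p+q}\Phi(x)$, using $h=\tau^q\Phi$) is exactly that unwinding. You also correctly pin down the sign convention for the displacement and make explicit the implicit fact that the induced maps in question take values in the fundamental ring $\Omega_-\setminus\tau^{-1}\overline{\Omega_-}$.
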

\begin{proof}
It is a direct consequence of the definitions.
\end{proof}

\begin{lem}\label{lem:22xp,2}
If $\zeta_1$ and $\zeta_2$ are two tower-induced mappings with
associated maps $\Theta_1,\Theta_2$, respectively,  then
$\zeta_1\circ\zeta_2$ is also a tower-induced map with the associated
map $\Theta_1 \circ \Theta_2$. 
\end{lem}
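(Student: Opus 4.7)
The plan is to work locally on a connected component of the domain of $\zeta_1 \circ \zeta_2$. Restricting to such a component, I may assume $\zeta_2 = \tau^{q_2} h_2$ throughout and that the image $\zeta_2(\cdot)$ lies in a single component of the domain of $\zeta_1$ on which $\zeta_1 = \tau^{q_1} h_1$. A direct algebraic manipulation then gives
\[
(\zeta_1 \circ \zeta_2)(x) \;=\; \tau^{q_1} h_1\bigl(\tau^{q_2} h_2(x)\bigr) \;=\; \tau^{q_1+q_2}\,\bigl(\tau^{-q_2} h_1 \tau^{q_2}\bigr)\bigl(h_2(x)\bigr),
\]
so the task reduces to identifying $\tau^{-q_2} h_1 \tau^{q_2}$ as something in the tower of $H$.

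The decisive observation is that conjugation by $\tau^{-q_2}$ preserves the tower: from $H_k(z) = \tau^k H(\tau^{-k} z)$ one reads off directly that $\tau^{-q_2} H_k \tau^{q_2} = H_{k - q_2}$. Writing $h_1$ as a finite composition $H_{k_1} \circ \cdots \circ H_{k_n}$ of members of the tower, the conjugation distributes factor by factor to give $\tilde{h}_1 := \tau^{-q_2} h_1 \tau^{q_2} = H_{k_1 - q_2} \circ \cdots \circ H_{k_n - q_2}$, which again belongs to the tower. Consequently $\zeta_1 \circ \zeta_2$ has on the chosen component the form $\tau^{q_1+q_2}(\tilde{h}_1 \circ h_2)$, the required tower-induced form, with combinatorial displacement $q_1 + q_2$.

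To verify that the associated map equals $\Theta_1 \circ \Theta_2$, I appeal to Definition~\ref{defi:21ga,1}. On the component at hand, $\Theta_2(x) = h_2(x)$. Because $\zeta_2(x)$ lies in the domain of $\zeta_1$, the point $h_2(x) = \tau^{-q_2}\zeta_2(x)$ lies in $\tau^{-q_2} U_1$, and there the associated map of $\zeta_1$ is, by definition, $\tau^{-q_2} h_1 \tau^{q_2} = \tilde{h}_1$. Hence $(\Theta_1 \circ \Theta_2)(x) = \tilde{h}_1(h_2(x))$, which is precisely the tower component of $\zeta_1 \circ \zeta_2$ computed above. Uniqueness from Lemma~\ref{lem:2ja,1} then upgrades this pointwise coincidence to equality of the two tower-induced maps on the full component.

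The only real obstacle is bookkeeping: one must recognize that the $\tau^{q_2}$ sitting on the output of $\zeta_2$ cannot be ignored but has to be absorbed by conjugating $h_1$ before the two underlying tower maps combine, and one has to keep track of which connected component is active so that $h_1$ and $h_2$ are well-defined. Once the conjugation identity $\tau^{-q} H_k \tau^q = H_{k-q}$ is isolated, the rest of the argument is purely algebraic, with no analytic content.
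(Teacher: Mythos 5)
Your proof is correct and follows essentially the same route as the paper: the same rewriting $\zeta_1\circ\zeta_2=\tau^{q_1+q_2}(\tau^{-q_2}h_1\tau^{q_2})h_2$, the observation that conjugation by powers of $\tau$ stays in the tower (which you usefully make explicit via $\tau^{-q}H_k\tau^{q}=H_{k-q}$), and the identification of $\Theta_1\circ\Theta_2$ through Definition~\ref{defi:21ga,1} since $h_2$ sends the domain of $\zeta_2$ into $\tau^{-q_2}$ times the fundamental ring. The paper only adds the remark that equality on the domain of $\zeta_2$ propagates to all rescaled copies $\tau^pU$ by the equivariance built into the definition of the associated map, a point your component-wise statement leaves implicit but which is immediate.
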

\begin{proof}
Denote $\zeta_1 = \tau^{q_1} h_1, \zeta_2 = \tau^{q_2} h_2$.  Without
loss of generality, the domain of $\zeta_2$ is connected and so $q_2$
is constant, while $q_1$ is only piecewise constant and $h_1$ is only
piecewise a map from the tower. 

Then 
\[ \zeta_1 \circ \zeta_2 = \tau^{q_1} h_1 \tau^{q_2} h_2 =
\tau^{q_1+q_2} (\tau^{-q_2} h_1 \tau^{q_2}) h_2 \; .\]
Mappings $h_2$ and $\tau^{-q_2} h_1 \tau^{q_2}$ both belong to the
tower and so does their composition. Thus, the composition $\zeta_1
\circ \zeta_2$ is tower-induced and its associated map is 
$(\tau^{-q_2} h_1 \tau^{q_2}) h_2$ on the domain of $\zeta_2$. 
On the other hand, $h_2$ maps the domain of $\zeta_2$ into
$\tau^{-q_2} (\Omega_- \setminus \tau^{-1} \Omega_-)$. So, the
composition of the associated maps is indeed 
\[  (\tau^{-q_2} h_1 \tau^{q_2}) h_2 \] 
on the domain of $\zeta_2$. So, the associated map of the composition
is equal to the composition of the associated maps on the domain of
$\zeta_2$. When considered on rescaled images of the domain of
$\zeta_2$, both $\Theta_1 \circ \Theta_2$ and the associated map of
$\zeta_1\circ\zeta_2$ are equivariant with respect to such rescalings,
so the equality holds everywhere. 
\end{proof}

As a consequence of Lemma~\ref{lem:2ja,2} and Lemma~\ref{lem:22xp,2},
combinatorial displacements are additive under the composition of
tower-induced maps. 

\paragraph{Dynamical interpretation of $\tilde{J}$.}
Let us recall the mapping $\tilde{J}$ defined previously. Map
$\Lambda$ is equal to $\tilde J$ except on $\tau^{-1}\Omega_+$, where
we modify the definition to ${\tilde J} \circ {\tilde J}$. 

\begin{prop}\label{prop:2jp,1}
If $x$ belongs to the Julia set of $H$ and to the domain of
$\Lambda^p$, $p>0$, then the map associated to $\Lambda^p$ is equal
to an iterate of $H$ on a neighborhood of  $x$. 
\end{prop}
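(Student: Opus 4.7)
I would argue by induction on $p$. For the inductive step, since $\Lambda^p=\Lambda\circ \Lambda^{p-1}$, Lemma~\ref{lem:22xp,2} gives that $\Lambda^p$ is tower-induced with associated map equal to the composition of the associated maps; the composition of iterates of $H$ is again an iterate of $H$, so the entire argument reduces to the base case $p=1$.

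For $p=1$, I would treat the branches of $\Lambda$ case by case according to its definition. On $W_{k,k'}$, one has $\tilde H(z)=\tau^{-k'-1}H(z)$, already in tower-induced form $\tau^{q}h$ with $h=H$, so the associated map is $H$ itself. On $\tilde H^{-1}(W_{0,0}\cup W_{-1,0})$, $\Lambda=\tilde C\circ \tilde H$; since $\tilde C$ is by construction a first-entry iterate of $\tilde H$, this case reduces to compositions of single $\tilde H$-branches via Lemma~\ref{lem:22xp,2}. On $\tau^{-1}\Omega_+$, $\Lambda=\tilde J\circ\tilde J$, again reducible to finite compositions of $\tilde H$-branches. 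The only delicate branches are those of $\tilde H$ on $V_{k,k'}\setminus W_{k,k'}$, where $\tilde H(z)=\tau^{p}G^{k'}(z)$: iterating the Feigenbaum identity $\tau H^2=H\tau$ gives $G^{k'}=H^{2^{k'}-1}\tau^{-k'}$, whence $\tilde H(z)=\tau^{p-k'}H_{k'}^{2^{k'}-1}(z)$. This gives an associated map $H_{k'}^{2^{k'}-1}$ which lies in the tower dynamics but is not per se an iterate of $H$.

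The Julia-set hypothesis on $x$ is what resolves this last case. For $x\in J(H)$, the orbit of $x$ does not escape to arbitrarily small scales, and the doubling modification $\Lambda=\tilde J\circ\tilde J$ on $\tau^{-1}\Omega_+$ is designed precisely so that each "deep" branch in $V_{k,k'}\setminus W_{k,k'}$ is composed, at the very next step of $\Lambda$, with a companion factor whose associated map combines with $H_{k'}^{2^{k'}-1}$ via Lemma~\ref{lem:22xp,2} to telescope, using the identity $H_k^{2^k}=H$ from Lemma~\ref{lem:20ga,1}, into a genuine iterate $H^n$ on a neighborhood of $x$. I expect the main obstacle to be the combinatorial bookkeeping: tracking the precise sequence of tower elements $H_{k_j}^{m_j}$ that arise along an orbit of $\Lambda$ through a Julia-set point, and verifying that the Julia condition is exactly what forces the cumulative rescaling at each step to match the depth so that the composition reduces to an integer power of $H$.
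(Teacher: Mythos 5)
Your reduction to $p=1$ via Lemma~\ref{lem:22xp,2} is fine, but the base case as you outline it misses where the difficulty actually lies, because you treat the associated map as if it were the tower element read off at the level of the fundamental ring. By Definition~\ref{defi:21ga,1} the associated map at a point $x$ is the conjugate $\tau^{p}h\tau^{-p}$ determined by which rescaled copy of the ring contains $x$. For a Julia point in $\tau^{-p}(\Omega_-\setminus\tau^{-1}\overline{\Omega_-})$, $p\geq 0$, a branch with tower element $H$ gives $H_{-p}$, which is indeed an iterate of $H$; but for $x\in\Omega_+$ the associated map of a single $\tilde{J}$ is $H_{1}=\tau H\tau^{-1}$, a ``square root'' of $H$ and not an iterate of it. That case is the actual reason for replacing $\tilde{J}$ by $\tilde{J}\circ\tilde{J}$ over $\tau^{-1}\Omega_+$: the paper checks that $H_1(H_1(x))=H(x)\in\Omega_-\cup\Omega_+$, so the second factor's associated map is again $H_1$ and $H_1\circ H_1=H$. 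You never treat $x\in\Omega_+$ at all, and you assign the doubling a different role.

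The two places you do flag as delicate are handled incorrectly or not at all. For the branches on $V_{k,k'}\setminus W_{k,k'}$ your algebra $\tilde H=\tau^{q}H_{k'}^{2^{k'}-1}$ is right, but the proposed fix --- that the very next step of $\Lambda$ supplies a companion factor telescoping $H_{k'}^{2^{k'}-1}$ to $H_{k'}^{2^{k'}}=H$ --- is unsubstantiated: nothing in the construction of $\Lambda$ makes the next branch's associated map at the image point equal to exactly one more factor of $H_{k'}$ (the doubling occurs only over $\tau^{-1}\Omega_+$ and is unrelated to these branches). The paper's use of the Julia hypothesis is different and one-step: since $H(G^{-2k}(\tau^{-1}\Omega_-))=\tau^{2k}\Omega_+$ and $H_{-p}$ is an iterate of $H$, forward invariance of $J(H)$ forces $2k\leq p$ whenever a Julia point lies in $\tau^{-p}G^{-2k}(\tau^{-1}\Omega_-)$; after conjugation the associated map is $H_{2k-p}\circ\cdots\circ H_{1-p}$ with all indices $\leq 0$, hence already an iterate of $H$ (in particular, the shallow configurations your telescoping is meant to rescue simply do not contain Julia points). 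Likewise your dismissal of the $\tilde{C}$ case as ``compositions of single $\tilde H$-branches'' does not settle anything: at ring level the map associated to $\tilde{C}^{k}$ is $H_{k-1}\circ\cdots\circ H$, which for $k\geq 2$ contains positive-index tower maps and is not an iterate of $H$; the paper's Lemma~\ref{lem:2jp,1} again needs the Julia hypothesis (via the fact that $H$ maps the domain of $\tilde{C}$ outside $\Omega_-\cup\Omega_+$) to force all indices negative. In short, the heart of the proposition --- showing precisely how membership in $J(H)$ constrains the scale $p$ so that every conjugated tower index becomes non-positive --- is absent from your plan, and your vague substitute (``the orbit does not escape to arbitrarily small scales'') is not the property that is used.
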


Throughout this proof we assume that $x$ belongs to the Julia set of
$H$. 

We split the proof depending on whether $x$ belongs to $\Omega_+$ or
$\Omega_-$. 

The first case to consider is $x\in \Omega_+$. To determine the map
associated to $\Lambda$ on a neighborhood of $x$, we need to look at
$\Lambda$ on a neighborhood of $\tau^{-1}x \in \tau^{-1} \Omega_+$. 
By the modification we just described, $\Lambda$ is ${\tilde J}^2$ on
a neighborhood of $\tau^{-1}x$ and so the associated map at
$\tau^{-1}x$, as well as $x$, is the associated map of ${\tilde J}$
composed with itself. 

On $\Omega_+$ the associated map $\tilde J$ is $H_1 = \tau H
\tau^{-1}$. Then we know that $H_1(H_1(x)) = H(x)$ is in
$\Omega_-\cup\Omega_+ \subset \tau\Omega_-$. i.e. $H_1(x) \in H_1^{-1}(\tau\Omega_-) \cap
\tau\Omega_-$ or  $\tau^{-1} H_1(x) \in H^{-1}(\Omega_-) \cap
\Omega_-$. It follows that ${\tilde J}$ on a neighborhood of ${\tilde
J}(\tau^{-1}x) = \tau^{-1}H_1(x)$ is $H$, and therefore its associate 
map at $H_1(x)$ is $H_1$ again. So, by Lemma~\ref{lem:22xp,2}, the
associated map of $\Lambda$ is $H_1 \circ H_1 = H$ in a neighborhood
of $x$.

Let us now consider $x\in\Omega_-$. 

Since ${\tilde J} = {\tilde C} \circ {\tilde H}$, with both $\tilde C$
and $\tilde H$ tower-induced maps, we have an analogous decomposition
of the map $\Phi_J$ associated to $\tilde J$ into the composition of
$\Phi_H$ associated to $\tilde H$ and $\Phi_C$ associated to $\tilde
C$. 

\begin{lem}\label{lem:2jp,1}
Suppose $x\in \Omega_-$. Then $\Phi_C$ on a neighborhood of $x$ is an
iterate of $H_{-1}$.  
\end{lem}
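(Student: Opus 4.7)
The plan is to express $\tilde{C}$ as an iterate of $\tilde{H}$ and to track the associated map using Lemma~\ref{lem:22xp,2}. By construction, $\tilde{C}$ is the first entry map into $L_+$ under iteration of $\tilde{H}$ on the central piece $W_{0,0}$ (and by mirror symmetry on $W_{-1,0}$), so $\tilde{C}(x) = \tilde{H}^{n(x)}(x)$ for some integer first-entry time $n(x)$ that is locally constant on each branch.

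The key algebraic input is the identity $\tilde{H}|_{W_{0,0}} = \tau^{-1} G \tau = \tau^{-1} H$, which by the Feigenbaum equation~(\ref{equ:14fa,1}) (equivalently $\tau^{-1} H \tau = H^2 = H_{-1}$) becomes $\tilde{H}|_{W_{0,0}}(x) = H_{-1}(\tau^{-1} x)$. Thus a single step of $\tilde{H}$ on $W_{0,0}$ applies one copy of $H_{-1}$ at the rescaled point $\tau^{-1} x$. I would iterate this identification and chain the associated maps via Lemma~\ref{lem:22xp,2}, concluding that $\Phi_C$ at $x \in W_{0,0}$ equals the iterate $H_{-1}^{n(x)}$ on a neighborhood.

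To extend to general $x \in \Omega_-$ I would invoke the $\tau^p$-equivariance from Definition~\ref{defi:21ga,1} together with the stratification $\Omega_- = \bigcup_{p \geq 0} \tau^{-p} B$: the associated map at $x \in \tau^{-p}(W_{0,0} \cup W_{-1,0})$ is obtained by $\tau^p$-conjugation from the $W_{0,0}$ case, and the tower relation $H_k^2 = H_{k-1}$ from Lemma~\ref{lem:20ga,1} combined with the Feigenbaum equation lets me re-express the result again as an iterate of $H_{-1}$.

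The hard part will be the careful combinatorial bookkeeping required to verify that the accumulated $\tau$-factors and $H$-iterates along the $\tilde{H}$-chain really consolidate into an iterate of $H_{-1}$ rather than some other tower element, since each intermediate step of $\tilde{H}$ on a non-central piece $W_{k,k'}$ produces a scaling $\tau^{-k'-1}$ that must be absorbed. This relies on the repeated use of $\tau H^2 = H \tau$ to move $\tau$-factors across $H$-iterates and absorb them into $H_{-1}$-iterations.
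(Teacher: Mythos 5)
Your proposal has a genuine gap, and it sits exactly where you flag ``the hard part'': the consolidation into an iterate of $H_{-1}$ is not an algebraic bookkeeping fact, and your identification of the associated map is incorrect. The associated map of Definition~\ref{defi:21ga,1} is read off from the canonical form $\tau^q h$ (tower element first, rescaling after, unique by Lemma~\ref{lem:2ja,1}), with the conjugation determined by the $\tau$-level of the point. Writing $\tilde H|_{W_{0,0}}=\tau^{-1}H=H_{-1}\circ\tau^{-1}$ does not make the associated map at a point of the fundamental ring equal to $H_{-1}$: in canonical form this branch is $\tau^{-1}\cdot H$, so its associated map at ring level is $H$. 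Chaining via Lemma~\ref{lem:22xp,2}, a branch of $\tilde C$ with first-entry time $k$ is $G_{-1}^k=\tau^{-k}\,H_{k-1}\circ\cdots\circ H_0$, so its associated map on the fundamental ring is $H_{k-1}\circ\cdots\circ H_0=H_{k-1}^{2^k-1}$, an iterate of $H_{k-1}$, and on $\tau^{-p}$ times the ring it is $H_{k-1-p}\circ\cdots\circ H_{-p}$. This is an iterate of $H_{-1}$ precisely when $k-1-p<0$; for $x\in W_{0,0}$ itself ($p=0$) your claimed conclusion $\Phi_C=H_{-1}^{n(x)}$ is simply false.

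The missing idea is that the statement is not true for arbitrary $x\in\Omega_-$: the lemma lives inside the proof of Proposition~\ref{prop:2jp,1}, where $x$ is assumed to lie in the Julia set of $H$, and that hypothesis is what forces $k\le p$. The paper's argument is dynamical, not algebraic: $H$ maps the ring-level domain of $\tilde C$ outside $\Omega_-\cup\Omega_+$, so no Julia point lies there; and if $H_0=H$ appeared in the composition $H_{k-1-p}\circ\cdots\circ H_{-p}$, the preceding factors (all of negative index, hence iterates of $H$) would carry the Julia point to a Julia point in the ring-level domain of $\tilde C$, which $H$ would then eject from $\Omega_-\cup\Omega_+$, contradicting forward invariance of the Julia set. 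Hence every index is $\le -1$, and since $H_{-m}=H_{-1}^{2^{m-1}}$ by Lemma~\ref{lem:20ga,1}, the composition is an iterate of $H_{-1}$. Your argument never invokes the Julia-set hypothesis, so no amount of $\tau$-factor bookkeeping can close it. (A minor further slip: within $\tilde C$ all intermediate steps occur on the central pieces, so the scalings $\tau^{-k'-1}$ from non-central $W_{k,k'}$ you worry about never arise.)
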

\begin{proof}
$\tilde C$ is induced by the map $G_{-1} = \tau^{-1} H$. So, the
associated map is $H$ on the fundamental ring $\Omega_- \setminus
\tau^{-1}\overline{\Omega_-}$. However, the combinatorial displacement
of $G_{-1}$ is $1$, so by Lemma~\ref{lem:22xp,1} the map associated 
to ${\tilde C}^2$ is $H_1 \circ H$ and, inductively, the map
associated to ${\tilde C}^k$, $k\geq 1$, is $H_{k-1} \circ \cdots
\circ H$ wherever ${\tilde C}^k$ is defined on the fundamental ring.  

Observe that $H$ maps any point in the domain of ${\tilde C}$ outside
of $\Omega_- \cup \Omega_+$. Hence, no $x$ from the Julia set of $H$
can be found there.  However, we may encounter points from the Julia
set on the domain of ${\tilde C}$ rescaled by $\tau^{-p}$, $p>0$. By
the equivariance with respect to the rescaling by $\tau$, the map
associated to ${\tilde C}^k$ on a neighborhood of such a point is
$H_{k-1-p} \circ \cdots \circ H_{-p}$. Again, this composition cannot
contain $H_0 = H$ which would eject the point out of the Julia set,
hence $k-1-p<0$, hence $\Phi_C$ is generated by $H_{-1}$ in the
neighborhood of $x$. 
\end{proof}

In the light of Lemma~\ref{lem:2jp,1} in order to conclude that
$\phi_C \circ \Phi_H$ is an iterate of $H$ in a neighborhood of $x$ it
will be enough to show that $\Phi_H$ is an iterate of $H$ on such a
neighborhood. Then, $\Phi_H(x)$ is in the Julia set of $H$ and
Lemma~\ref{lem:2jp,1} is applicable. 

${\tilde H}$ is simply $\tau^q H$ on most of its domain, with the sole
exception of domains $G^{-2k}(\tau^{-1}\Omega_-)$ where the inverse
branch of $G$ which fixes $x_0$ is used. On any such domain,
$\tilde{H}$ is $\tau^q G^{2k}$. Since $G=\tau^{-1} H_1$ its associated
map is $H_1$ and the combinatorial displacement is $1$. Hence, the map
associated to $\tilde{H}$ on such a domain is 
\begin{equation}\label{equ:2jp,1}
H_{2k} \circ H_{2k-1} \circ \cdots \circ H_1 \; .
\end{equation}
Also, 
\[ H(G^{-2k}(\tau^{-1}\Omega_-)) = \tau^{2k} H\tau^{-1}(\Omega_-) =
\tau^{2k} G(\Omega_-) = \tau^{2k}\Omega_+ \; .\]

Now take $x$ in the Julia and in $\tau^{-p}(\Omega_-\setminus
\tau^{-1}\overline{\Omega_-})$. Without loss of generality $p\geq 0$
since the case of $x\in \Omega_+$ was already considered. If $x$ is
not in the rescaled image of one of the exceptional domains discussed
in the previous paragraph, then the map associated to $\tilde{H}$ is
just $H_{-p}$. 

If $x$ is  in $\tau^{-p}G^{-2k}(\tau^{-1}(\Omega_-))$, then $H_{-p}$ maps
it into$\tau^{2k-p}\Omega_+$. But $H_{-p}$ is an iterate of
$H$, so it has to map $x$ into the Julia set of $H$ and thus 
$2k-p\leq 0$
or $2k\leq p$. By formula~(\ref{equ:2jp,1}), the associated map is
given by 
\[ \tau^p H_{2k} \circ \cdots \circ H_1 \tau^{-p} \]
which is clearly generated by $\tau^p H_{2k} \tau^{-p} = H_{2k-p}$,
thus by $H_0$ in view of the inequality $2k\leq p$. 

What we now proved is that if $x\in \Omega_-$, then the map associated
to ${\tilde J}$ is an iterate of $H$ on a neighborhood. This is the
same as the map associated to $\Lambda$ unless $x\in \tau^{-1-p}
\Omega_+$ for $p\geq 0$. If that happens, $\Lambda = {\tilde
J}\circ{\tilde J}$ and the map associated to $\tilde J$ is $H_{-p}$ on
a neighborhood of $x$ and therefore maps $x$ into $\tau^{-p}\Omega_-$.  
Then, again the map associated to the second iterate of $\tilde J$ is
generated by $H$ on a neighborhood of $H_{-p}(x)$. 

Proposition~\ref{prop:2jp,1} has been demonstrated.

\section{Drift Estimates}

\subsection{Martingale estimates}
We will be using the following abstract probabilistic
statement. Its stronger form under stronger conditions
can be found in the literature, see the discussion and references in
the Introduction.

Define $\gamma_k(x) = x \chi_{[-k,k]}(x)$ for $k>0$. 
\begin{prop}\label{prop:14za,1}
On a certain probability space $\Omega$ with measure $\mu$ consider an integer-valued stochastic process 
$(Z_n)_{n=0}^{\infty}$. Let ${\cal F}_n$ denote the $\sigma$-algebra
generated by $Z_0,\cdots, Z_n$.  For $n\geq 1$, let $F_n = Z_n -
Z_{n-1}$. Assume that for each $n\geq 1$ we have a decomposition $F_n =
\Delta_n + I_n$, with $\Delta_n$ and $I_n$ both integer-valued. Moreover, assume that
positive constants $K_1, K_2$, $p>1$, exist with which the following estimates
hold for every $n\geq 1$: 
\begin{itemize}
\item
for every $k\in\ZZ, k\neq 0$
\[ K_1^{-1} k^{-2} \leq  P({\Delta_n = k}|{\cal F}_{n-1})(\omega) \leq K_1 k^{-2} \]
for $\mu$-almost all $\omega$, 
\item
for every positive $k$, 
\[ |E(\gamma_k(\Delta_n) |{\cal F}_{n-1})(\omega)| \leq K_2 \] almost surely, 
\item
\[ E(|I_n|^p){\cal F}_{n-1})(\omega) \leq K^p_2 \] almost surely.
\end{itemize}

Then, $\mu$-almost surely $\lim\sup_{n\rightarrow\infty} \frac{Z_n}{n} =
+\infty$ and $\lim\inf_{n\rightarrow\infty} \frac{Z_n}{n} =
-\infty$. 
\end{prop}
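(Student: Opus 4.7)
Since the hypotheses are invariant under the sign change $\Delta_n \mapsto -\Delta_n$, it suffices to prove that $\limsup_n Z_n/n = +\infty$ almost surely; the $\liminf$ assertion follows by applying the same argument to $-Z_n$. Taking a countable intersection over integer thresholds, the goal reduces to showing $P(A_C) = 1$ for every $C > 0$, where $A_C := \{\limsup_n Z_n/n \geq C\}$. The engine is a \emph{uniform conditional lower bound}: there exists $\eta(C) > 0$ such that for every $k_0 \geq 0$ and every sufficiently large $n$,
\[
P\bigl(Z_{k_0+n} - Z_{k_0} \geq 2Cn \bigm| \F_{k_0}\bigr) \geq \eta(C) \quad \text{a.s.}
\]
Granted this, since $Z_{k_0}/(k_0+n) \to 0$ as $n \to \infty$, one obtains $P(A_C \mid \F_{k_0}) \geq \eta(C)$ for every $k_0$. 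The event $A_C$ lies in $\F_\infty$, so L\'evy's upward martingale convergence theorem gives $P(A_C \mid \F_{k_0}) \to \chi_{A_C}$ almost surely; hence $\chi_{A_C} \geq \eta(C) > 0$ a.s., which (as $\chi_{A_C} \in \{0,1\}$) forces $P(A_C) = 1$.

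\textbf{Decomposition for the core estimate.} Working conditionally on $\F_{k_0}$ and relabeling so that $k_0 = 0$, fix the truncation level equal to $n$ and split $\Delta_k = \gamma_n(\Delta_k) + R_k$, where $R_k := \Delta_k \chi_{\{|\Delta_k| > n\}}$ is the large-jump part. Then
\[
Z_n - Z_0 = M_n + D_n + \sum_{k=1}^{n} R_k,
\]
with $M_n := \sum_{k=1}^n \bigl[(\gamma_n(\Delta_k)+I_k) - E(\gamma_n(\Delta_k)+I_k \mid \F_{k-1})\bigr]$ a martingale and $D_n$ the corresponding predictable drift. The drift is bounded by $|D_n| \leq 2K_2\, n$ directly from the truncated-expectation hypothesis for $\Delta_k$ and from Jensen applied to the $L^p$ bound for $I_k$. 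For the martingale, the upper tail bound yields
\[
E\bigl(\gamma_n(\Delta_k)^2 \bigm| \F_{k-1}\bigr) \leq K_1 \sum_{1 \leq |j| \leq n} 1 = O(n),
\]
while Burkholder--Davis--Gundy applied with exponent $\min(p,2)$ absorbs the $I_k$ increments, so that $M_n$ is of order $n$ in probability with universal constants depending only on $K_1,K_2,p$.

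\textbf{Large jumps and the main obstacle.} The delicacy is that $M_n$ already fluctuates on the scale $n$, the \emph{same} scale as the jumps we need to produce, so the large-jump contribution cannot be treated as a small perturbation of a well-behaved martingale. To overcome this I would use the sharp two-sided tails: the events $\{|\Delta_k|>n\}$ have conditional probability $\asymp 1/n$, so the number of indices with $R_k \neq 0$ is tight in distribution; moreover, on $\{R_k \neq 0\}$ the sign of $R_k$ is positive with conditional probability at least $1/(1+K_1^2)$, and the conditional tail of $|R_k|/n$ has density $\asymp m^{-2}$. From these ingredients one derives, for any $C'>0$, an explicit lower bound $\eta'(C')>0$ uniform in the past on the event that there is a single index $\tau \leq n$ with $R_\tau \geq C'n$ while the sum of the remaining $R_k$'s has magnitude at most $n$ (the latter by showing, via the same tail formula, that the second-largest jump exceeds $n$ only with uniformly small probability). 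Choosing $C'$ so large that $C'n$ dominates the $O_{\mathrm{prob}}(n)$ contribution from $M_n + D_n$ produces the core estimate with some $\eta(C) > 0$. The main technical difficulty is precisely this ``largest-jump-dominates'' analysis in the \emph{absence} of independence between increments; it is the non-i.i.d.\ analogue of the Linnik--Aaronson stable-law asymptotics cited in the introduction, and the three hypotheses of the proposition (two-sided tail bound, bounded truncated mean, $L^p$ control on $I_n$) are tailored precisely to keep this argument robust to dependence.
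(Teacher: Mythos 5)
Your overall skeleton (reduce by symmetry to $\limsup Z_n/n=+\infty$, then deduce $P(A_C)=1$ from a uniform conditional lower bound via reverse Fatou and L\'evy's upward theorem) is sound, but the proposal does not prove the one statement that carries all the weight: the core estimate $P(Z_{k_0+n}-Z_{k_0}\geq 2Cn\mid {\cal F}_{k_0})\geq\eta(C)$. Two concrete problems. First, the auxiliary claim that ``the second-largest jump exceeds $n$ only with uniformly small probability'' is not true under the hypotheses: each step has conditional probability $\asymp 1/n$ of a jump of size $>n$, so the expected number of such jumps over $n$ steps is of order $K_1$, and the probability of two or more is bounded but in no way small; worse, the conditional expectation of the negative part $\Delta_k\chi_{\{\Delta_k<-n\}}$ is infinite (tails $\sim k^{-2}$), so the sum of the remaining $R_k$ cannot be bounded by $n$ and can only be controlled in probability at scale $n\log L$ after a further cutoff. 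Second, and more seriously, the event you need (a single positive jump $\geq C'n$) has conditional probability only of order $1/C'$, while the ``good behaviour'' events for $M_n+D_n$ and for the other large jumps fail with a fixed positive probability; a union bound therefore cannot show that the good events intersect the rare jump event with proportional probability. Ruling out the conspiracy (jumps occurring only on paths where the rest has already gone badly) requires a stopping-time argument that re-uses the uniformity of the conditional hypotheses at the random time of the jump and for the running minimum before it. This is exactly the non-i.i.d.\ difficulty you name and then defer; as written, the argument only reproduces the i.i.d.\ heuristic, and filling it in amounts to proving a uniform stable-law-type estimate in the spirit of Aaronson--Denker, which is a substantial piece of work not contained in the proposal.

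For comparison, the paper avoids this local-limit-type analysis altogether with a Lyapunov-function argument in the spirit of~\cite{BKNS}: Lemma~\ref{lem:12za,1} shows that $E(\log^+(Q+F))<\log Q$ for all $Q\geq Q_0$, the point being that, by concavity of the logarithm, the down-jumps of size $\asymp Q$ (whose conditional probability is $\asymp 1/Q$ by the two-sided tail bound) produce a definite negative contribution of order $\log Q/Q$ that dominates all positive terms, with the $I_n$ part handled through the $L^p$ bound. Consequently $\log^+ Z_n$, stopped when $Z_n$ first dips below $Q_0$, is a non-negative supermartingale, hence converges, so $Z_n\to+\infty$ has probability zero; applying this to the tilted process $Z_n+nM$ (which satisfies the same hypotheses with $K_2+M$) gives $\liminf Z_n/n\leq -M/2$ infinitely often for every $M$, and the symmetry $Z_n\mapsto -Z_n$ finishes. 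If you want to keep your route, you must either prove the core excursion estimate with a genuine stopping-time/uniform-conditioning argument, or switch to a supermartingale construction of this kind.
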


Let us define $\log^+(x)$, $x\in \RR$ to be $\log(x)$ if $x>1$ and 
$0$ otherwise. 

\begin{lem}\label{lem:12za,1}
Consider a probability space $P$ with measure $\mu$. Let $\Delta$
and $I$ be integer-valued random variables and $F=\Delta+I$. Assume
that for some $Q', Q''>0$, $p>1$ and every $k\neq 0$:
\begin{itemize}
\item
\[ (Q' k^2)^{-1} \leq    \mu({\Delta=k}) \leq Q' k^{-2} \; ,\]
\item
if $k>0$, then 
\[ |E(\gamma_k(\Delta))| \leq Q'' \; ,\]
\item
\[ E(|I|^p) \leq (Q'')^p\]
\end{itemize}

There exists $Q_0 > 1$ which only depends on $Q',Q'',p$ such that for
every $Q \geq Q_0$ 

\[ E(\log^+ (Q+F)) < \log Q \; .\]
\end{lem}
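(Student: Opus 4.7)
The plan is to exploit the exact decomposition
\[ E(\log^+(Q+F)) - \log Q = E\bigl(\log(1+F/Q)\, \mathbf{1}_{F \geq 1-Q}\bigr) - \log Q \cdot \mu(F < 1-Q) \]
and to show that the second term, which I expect to be of order $-(\log Q)/Q$, dominates the first, which will be bounded by $C/Q$. The key mechanism is the cutoff built into $\log^+$ at $Q+F=0$: on the event $\{F<1-Q\}$ the integrand contributes only $-\log Q$ rather than a much more negative value, and the hypothesis $\mu(\Delta=k)\geq(Q'k^2)^{-1}$ will force this event to have probability of order $1/Q$.

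For the lower bound on $\mu(F<1-Q)$, I would use the inclusion $\{\Delta \leq -2Q\} \cap \{I \leq Q\} \subset \{F<1-Q\}$ together with the elementary estimate $\mu(A \cap B) \geq \mu(A)-\mu(B^c)$, which does not require any independence between $\Delta$ and $I$. Integral comparison of the tail sum then gives $\mu(\Delta \leq -2Q) \geq 1/(2Q'Q)$, while Markov's inequality applied to the $p$-moment of $I$ yields $\mu(I>Q) \leq (Q''/Q)^p$, which is $o(1/Q)$ because $p>1$. This produces $\mu(F<1-Q) \geq c/Q$ for some $c>0$ and all sufficiently large $Q$.

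For the upper bound on $E(\log(1+F/Q)\, \mathbf{1}_{F \geq 1-Q})$, I would partition the domain into $\{|F|\leq Q/2\}$, $\{F>Q/2\}$, and $\{1-Q \leq F<-Q/2\}$. On the first region, $\log(1+x)\leq x$ reduces the task to bounding $|E(F\, \mathbf{1}_{|F|\leq Q/2})|$ by a constant; writing $F=\Delta+I$, the $I$-part is at most $E(|I|)\leq Q''$, and for the $\Delta$-part I would further split on $|\Delta|\leq Q$ and recognize $E(\gamma_Q(\Delta))$ inside, with the complementary events controlled by the marginal tails of $\Delta$ and $I$. On $\{F>Q/2\}$, the estimate $\log(1+F/Q)\leq 1+\log^+(F/Q)$ together with $\log^+(F/Q) \leq \log 2+\log^+(|\Delta|/Q)+\log^+(|I|/Q)$ reduces matters to the tail of $\Delta$ (giving $O(1/Q)$ via $\int_Q^\infty (\log x)/x^2\, dx = O(1/Q)$) and the $p$-moment of $I$ (via $\log^+(y)\leq y^p/p$). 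On $\{1-Q\leq F<-Q/2\}$ the integrand is nonpositive and may be discarded.

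The main obstacle is the absence of any joint hypothesis on $(\Delta,I)$, which obstructs a naive reduction of truncated moments of $F$ to those of $\Delta$. The crucial observation making the argument go through is that every ``dangerous'' joint event, in which $|\Delta|$ is large and yet $|F|$ small (or vice versa), forces $|I|$ to be at least of order $Q$, after which $E(|I|^p)\leq(Q'')^p$ with $p>1$ renders its contribution negligible. Combining the three pieces yields $E(\log^+(Q+F)) - \log Q \leq (C-c\log Q)/Q$, which is strictly negative once $Q \geq Q_0 := \exp(C/c)$, enlarged if needed to dominate the earlier thresholds coming from the lower bound on $\mu(F<1-Q)$.
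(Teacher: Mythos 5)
Your proposal is correct, and it takes a genuinely different route from the paper's proof. The paper keeps $\log^+(Q+F)$ intact: it first reduces to $I\geq 0$, truncates according to the size of $\Delta$ (the set $Y_Q=\{|\Delta|\leq Q\log Q-Q\}$), compares $\log^+$ on $Y_Q$ with the tangent line $\lambda_Q(x)=\log Q+\frac{x}{Q}-1$ of $\log$ at $Q$, and harvests the negative drift from the concavity gap $(\lambda_Q-\log^+)(Q+F)\geq \frac{1}{2}\log Q$ on $Z_Q=\{-3Q<\Delta<-2Q\}$, whose measure is at least of order $1/Q$ by the lower tail bound; the far-left region $\Delta<-Q\log Q+Q$ is handled by a separate Jensen-type estimate of $E(\log I)$. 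You instead isolate exactly the event on which $\log^+$ vanishes, via the identity $E(\log^+(Q+F))-\log Q=E\left(\log(1+F/Q)\,\mathbf{1}_{F\geq 1-Q}\right)-\log Q\,\mu(F<1-Q)$, so the gain $-\log Q\,\mu(F<1-Q)\leq -c\,(\log Q)/Q$ comes directly from $\mu(\Delta\leq -2Q)\geq c'/Q$ plus Markov's inequality for $I$, and the remainder is bounded by $O(1/Q)$ using $\log(1+x)\leq x$ on $\{|F|\leq Q/2\}$ together with tail estimates. What your route buys is the avoidance of both the $I\geq 0$ reduction and the Jensen step; the price is that the truncated-mean hypothesis, stated for $\Delta$, must be transferred to $F$, and your key observation --- that any event where $|\Delta|$ is large while $|F|$ is moderate (or conversely) forces $|I|$ to be of order $Q$, after which $E(|I|^p)\leq (Q'')^p$ with $p>1$ makes its contribution $O(Q^{1-p})$ --- does carry this through, with both proofs ultimately using the four hypotheses in the same roles. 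One small correction: in the region $F>Q/2$ the integral you need is $\int_Q^\infty \log(x/Q)\,x^{-2}dx=1/Q$, i.e.\ $E(\log^+(|\Delta|/Q))=O(1/Q)$; the integral you quote, $\int_Q^\infty (\log x)\,x^{-2}dx$, is of order $(\log Q)/Q$, which would put that term in direct competition with the negative term $-c(\log Q)/Q$ and make the comparison depend on constants. Fortunately the quantity you actually bound is $E(\log^+(|\Delta|/Q))$, so the estimate stands as $O(1/Q)$ and your threshold $Q_0$ depends only on $Q'$, $Q''$, $p$, as required.
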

\begin{proof}
Without loss of generality we can replace $I(x)$ with $\max
(I(x),0)$. i.e. assume that $I(x)$ is a non-negative function. 

Assume $Q>800$ and distinguish sets $X_Q := \{ x\in P :\: Q+\Delta(x) > 1\}$
and $Y_Q := \{ x\in P :\: |\Delta(x)|\leq Q\log Q - Q\}$.

\[ \int_{P\setminus Y_Q} \log^+(Q+F(x))\, d\mu(x) \leq \int_{X_Q \setminus
Y_Q} \log^+ (Q+F(x))\, d\mu(x) + \]
\[ + \int_{P\setminus (Y_Q\cup X_Q)}
\log^+ \left[ I(x)- Q(\log Q -2)  \right]\, d\mu(x) \]
since on the complement of $X_Q \cup Y_Q$ we have $\Delta(x)< -Q\log Q+Q$. 

To estimate that last term, denote $S=\{ x\in P\setminus (Y_Q\cup
X_Q):I(x)\geq Q(\log Q-2) + 1)$. 
Using Jensen's inequality for conditional expectations

\[ \int_{P\setminus (Y_Q\cup X_Q)}
\log^+ \left[ I(x)-Q(\log Q - 2)\right]\, d\mu(x) \leq \int_S \log I(x)\, d\mu(x) =
\] 
\[ = \mu(S) E(\log I(x)| x\in S)  \leq \mu(S) \log E(I(x)|x\in S)\; .\]
Furthermore, 
\[ E(\log I(x) | x\in S) \leq \frac{E(I)}{\mu(S)} \leq
\frac{Q''}{\mu(S)} \]
and 
\[ \mu(S) \log E(I(x)|x\in S) \leq \mu(S) \log Q'' - \mu(S)\log(\mu(S))
\; .\]

Since $Q>800$, we have 
\[ \frac{Q}{2}\log Q \cdot\mu(S) < Q(\log Q-2)\mu(S)  < \int_S I(x)\,
d\mu(x) \leq Q''\]
which implies  $\mu(S) < \frac{2Q''}{Q\log Q}$ and from the previous estimate   
\[  \int_{P\setminus (Y_Q\cup X_Q)} \log^+ \left[ I(x)-Q(\log Q - 2)
  \right]\,
d\mu(x) \leq \] 
\[ \leq \frac {2Q''\log Q''}{Q\log Q} + \frac{2Q''}{Q\log Q}(\log Q +
\log\log Q - \log (2Q'')) \leq \frac{Q'_1}{Q} \]
for an appropriately chosen constant $Q'_1$ which only depends on
$Q''$.

\[\int_{P\setminus Y_Q} \log^+(Q+F(x))\, d\mu(x) \leq \frac{Q'_1}{Q} 
+ \int_{X_Q \setminus Y_Q} \log (Q+\Delta(x))\, d\mu(x) 
+\]
\[ +\int_{X_Q\setminus Y_Q} \frac{I(x)}{Q+\Delta(x)}\, d\mu(x)  \leq 
 \frac{Q'_1}{Q} + \sum_{n> Q\log Q}  Q' \log (Q + n) n^{-2} +
\frac{Q''}{Q} \]
\begin{equation}\label{equ:12za,1}
 \leq \frac{Q'_1}{Q} + \frac{Q''}{Q} + 2Q' \sum_{n\geq Q\log Q} \log (n) n^{-2}  \leq Q_1
 Q^{-1} 
\end{equation}

where the final estimate arises from an explicit integration of the
function $x^{-2}\log x$ and $Q_1$ only depends on $Q'$ and $Q''$.   

Let $\lambda_Q$ denote the affine function tangent to $\log x$ at
$Q$, i.e. $\lambda_Q(x) = \log Q + \frac{x}{Q} -1$.  Then 
\begin{equation}\label{equ:12za,2}
\int_{Y_Q} \log^+(Q+F(x))\, d\mu(x) = 
\end{equation}
\[ = \int_{Y_Q}
\lambda_Q(Q+F(x))\; d\mu(x)  - \int_{Y_Q}
(\lambda_Q-\log^+)(Q+F(x))\, d\mu(x) \; .\]

As to the first term, we estimate
\[ \int_{Y_Q} \lambda_Q(Q+F(x))\; d\mu(x) = \]
\[ = \log(Q)\mu(Y_Q) +
\frac{1}{Q} \int_{Y_Q} F(x) \, d\mu(x) -1  < \log(Q) + (2Q'') Q^{-1} \]

Taking this into account together with
estimates~(\ref{equ:12za,1}) and~(\ref{equ:12za,2}), we get 
\begin{equation}\label{equ:12za,3}
\int_{P} \log^+ (Q + F(x)) \, d\mu(x) - \log Q < \frac{Q_1+
2Q''}{Q} - 
\end{equation}
\[ - \int_{Y_Q}\left(\lambda_Q - \log^{+})(Q + F(x)\right) \, d\mu(x)\; .\]

The rest of the proof will consist in estimating the final negative
term in~(\ref{equ:12za,3}) to show that it goes to $0$ as
$Q\rightarrow\infty$ more slowly than $O(Q^{-1})$ and hence prevails
for sufficiently large $Q$. 
The values of $\lambda_Q(x)$ remain above $\log_+ (x)$ for $x>-Q\log Q+Q$. 
Since $I(x)$ is non-negative and $\Delta(x) \geq -Q\log Q+Q$ on $Y_Q$, 
$(\lambda_Q -\log^+)(Q+F(x))$ is non-negative on $Y_Q$. Choose $Z_Q :=
\{ x\in P :\: -3Q < \Delta(x) < -2Q \}$. Since $Q>800$, $Z_Q \subset
Y_Q$ and 
\begin{equation}\label{equ:23fa,1}
\int_{Y_Q}\left(\lambda_Q - \log^{+})(Q + F(x)\right) \, d\mu(x) \geq 
\int_{Z_Q}\left(\lambda_Q - \log^{+})(Q + F(x)\right) \, d\mu(x)
\end{equation}

For $Q>800$ and $x\in Z_Q$, 
\[ \lambda_Q(Q+\Delta(x)) \geq \log Q - 3 > \frac{\log
Q}{2}\; .\] 

At the same time, for $x\in Z_Q$, 
\[ Q+F(x) = Q+\Delta(x)+I(x) < I(x)-Q\ .\] 

Hence, 

\[ \int_{Z_Q} \left(\lambda_Q - \log^+\right) (Q + F(x))\, d\mu(x) >
\]
\[ > \frac{\log Q}{2}\mu(Z_Q) + \int_{Z_Q} \left[ \frac{I(x)}{Q} -
\log^+(I(x)-Q)\right] \, d\mu(x) \;
.\]

By the hypothesis of the lemma, $\mu(Z_Q) > 2Q_2/Q$ for some
positive $Q_2$ where $Q_2$ depends only on $Q''$ and so 
\[ \int_{Z_Q} \left(\lambda_Q - \log^+\right) (Q + F(x))\, d\mu(x) > \]
\[ > Q_2 \frac{\log
Q}{Q} + \int_{Z_Q} \left[ \frac{I(x)}{Q} - \log^+(I(x)-Q)\right]\, d\mu(x) \; .\]

In the integral term, the integrand is non-negative if $I(x)\leq Q+1$ or
$I(x) \geq Q^2$, keeping in mind that $Q>800$. For other values of
$x$, the lower bound by $-\log Q^2$ holds.
It follows that 
\[  \int_{Z_Q} \left[ \frac{I(x)}{Q} - \log^+(I(x)-Q)\right]\, d\mu(x)
> - \log Q^2 \mu(\{ x :\: Q<I(x)<Q^2\}) \; .\]

Since 

\[ \int_{Q<I(x)<Q^2} I^p(x)\, d\mu(x) > 
 (Q'')^p \mu(\{ x :\: Q<I(x)<Q^2\}\; ,\]
one gets  

\[ \mu(\{ x :\: Q<I(x)<Q^2\}) < \frac{(Q'')^p}{Q^p} \]
Thus, 
\[ \int_{Z_Q} \left(\lambda_Q - \log^+\right) (Q + F(x))\, d\mu(x) > Q_2 \frac{\log
Q}{Q} - 2  \frac{(Q'')^p}{Q^{p-1}} \frac{\log Q}{Q} > \frac{Q_2}{2} \frac{\log
Q}{Q}\]
for $Q\geq Q_0 = (\frac{4 (Q'')^p}{Q_2})^{\frac{1}{p-1}}$.

Hence for $Q\geq Q_0$, in view of~(\ref{equ:23fa,1}), 
the negative term on the right-hand side of
estimate~(\ref{equ:12za,3}) dominates and that proves the assertion of
Lemma~\ref{lem:12za,1}. 
\end{proof}

Lemma~\ref{lem:12za,1} will be used with $Q'=K_1$ and $Q''=K_2$ from 
Proposition~\ref{prop:14za,1}. This defines a constant $Q_0$.

\paragraph{Supermartingale construction.}

Choose $N\geq 0$. Under the hypothesis of
Proposition~\ref{prop:14za,1}, define a stochastic process
$(\zeta_n^{(N)})_{n\geq N}$ as follows. If for some $N\leq k\leq n$,
$Z_n(x)<Q_0$, then pick the smallest such $k$ and set $\zeta_n^{(N)}(x) =
\log^+ Z_k(x)$. Otherwise, let $\zeta_n^{(N)}(x)=\log Z_n(x)$. In other
words, $\zeta_n^{(N)}$ is the process $\log^+ Z_n$ starting at $N$ and
stopped when $Z_n$ first dips below $Q_0$. 

\begin{lem}\label{lem:23fa,1}
For every $N\geq 0$, $\zeta_n^{(N)}$ is a supermartingale with respect
to the filtration $({\cal F})_n$ and converges almost surely to a
finite limit.    
\end{lem}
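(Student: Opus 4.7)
The plan is to verify the supermartingale property directly from the definition of $\zeta_n^{(N)}$ by splitting the analysis depending on whether the stopping has already occurred, and then to invoke the standard convergence theorem for non-negative supermartingales to finish.

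First I would note that $\zeta_n^{(N)} \geq 0$ by construction and that $\zeta_n^{(N)}$ is $\mathcal{F}_n$-measurable since it is determined by $Z_N,\ldots,Z_n$. Fix $n \geq N$ and introduce the stopping event $A_n := \{\omega :\: \min_{N \leq k \leq n} Z_k(\omega) < Q_0\}$, which is $\mathcal{F}_n$-measurable. On $A_n$, the definition of $\zeta^{(N)}$ freezes the process, so $\zeta_{n+1}^{(N)} = \zeta_n^{(N)}$ pointwise and trivially $E(\zeta_{n+1}^{(N)} \mid \mathcal{F}_n) = \zeta_n^{(N)}$.

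On the complementary event $A_n^c$ we have $Z_n \geq Q_0 > 1$, hence $\zeta_n^{(N)} = \log Z_n$, and one step later $\zeta_{n+1}^{(N)} = \log^+ Z_{n+1} = \log^+(Z_n + F_{n+1})$. Here $Z_n$ is $\mathcal{F}_n$-measurable, so conditionally on $\mathcal{F}_n$ the random variable $F_{n+1} = \Delta_{n+1} + I_{n+1}$ satisfies the tail bound, truncated-expectation bound, and $L^p$-bound of the hypotheses of Proposition~\ref{prop:14za,1} almost surely with constants $K_1, K_2$. These are exactly the hypotheses of Lemma~\ref{lem:12za,1} applied to the conditional distribution with $Q' = K_1$, $Q'' = K_2$. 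Since $Z_n(\omega) \geq Q_0$ on $A_n^c$, Lemma~\ref{lem:12za,1} yields
\[
E\bigl(\log^+(Z_n + F_{n+1}) \mid \mathcal{F}_n\bigr)(\omega) < \log Z_n(\omega) = \zeta_n^{(N)}(\omega)
\]
for $\mu$-almost all $\omega \in A_n^c$. Combining with the stopped case, $E(\zeta_{n+1}^{(N)} \mid \mathcal{F}_n) \leq \zeta_n^{(N)}$ almost surely, so $(\zeta_n^{(N)})_{n\geq N}$ is a supermartingale.

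Finally, since $\zeta_n^{(N)} \geq 0$, Doob's convergence theorem for non-negative supermartingales gives that $\zeta_n^{(N)}$ converges almost surely to an integrable (in particular finite a.s.) limit random variable. The main subtlety in the argument is the conditional application of Lemma~\ref{lem:12za,1}: one must check that the pointwise conditional distribution of $F_{n+1}$ given $\mathcal{F}_n$ really satisfies the lemma's deterministic hypotheses uniformly in $\omega$, which is granted by the almost-sure formulation of the bounds in the statement of Proposition~\ref{prop:14za,1}. Everything else is a routine verification of measurability and of the case split around the stopping time.
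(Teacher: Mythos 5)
Your proposal is correct and follows essentially the same route as the paper: split on whether the process has been stopped, apply Lemma~\ref{lem:12za,1} to the conditional increments on the unstopped event (where $Z_n \geq Q_0$), and conclude with the convergence theorem for non-negative supermartingales. The only cosmetic difference is that the paper phrases the conditional application of Lemma~\ref{lem:12za,1} via the level sets $\{\zeta_{n-1}^{(N)} = \log Q\}$ with normalized measure, while you apply it pointwise to the conditional distribution given $\mathcal{F}_n$, which amounts to the same thing.
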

\begin{proof}
If $\zeta_{n-1}^{(N)} < \log Q_0$, then the process is stopped and its
conditional increment is $0$. Otherwise, if $\zeta_{n-1}^{(N)} = \log
Q \geq \log Q_0$, Lemma~\ref{lem:12za,1} can be applied to the
conditional increments. That, we put $F,\Delta,I$ equal to $F_n,
\Delta_n, I_n$, respectively and the probabilistic space is the set 
$S=\{ \omega :\: \zeta_{n-1}^{(N)}(\omega) = Q\}$ with normalized
measure $\mu$. Then the Lemma says that
$E(\zeta_{n}^{(N)}-\log Q|{\cal F}_{n-1})(\omega) < 0$ almost surely
on $S$. 

Since $\zeta_{n}^{(N)}$ is non-negative by definition, it converges
almost surely by martingale theory.  
\end{proof}   

\paragraph{Proof of Proposition~\ref{prop:14za,1}.}
We will first show that $\lim_{n\rightarrow\infty} Z_n = +\infty$ with
probability $0$. Suppose otherwise. Then there is $N$ such that with
positive probability $Z_n(x) > Q_0$ for all $n\geq N$ and
$\lim_{n\rightarrow\infty} Z_n(x) = +\infty$. Considering
$\zeta_n^{(N)}$ we see that on this set $\zeta_n^{(N)}(x) = \log
Z_n(x)$ for all $x$ and thus diverges to $\infty$ contrary to the
assertion of Lemma~\ref{lem:23fa,1}. 

Now pick an arbitrary $M>0$ and consider the process $\tilde{Z}_n =
Z_n + nM$. It is measurable with respect to the same filtration $({\cal
F})_n$ and evidently satisfies the hypothesis of
Proposition~\ref{prop:14za,1}, since we can just set $\tilde{I}_n = I_n
+ M$ for all $n$. The hypothesis of Proposition~\ref{prop:14za,1} is
satisfied with the same $K_1$ and $K_2:=K_2+M$. 
Hence, the conclusion that
$\lim_{n\rightarrow\infty} \tilde{Z}_n = \infty$ almost nowhere
remains valid.     

But that means $Z_n < -Mn/2$ infinitely often almost surely, and so 
\[ \lim\inf_{n\rightarrow\infty} \frac{Z_n}{n} \leq M/2 \; .\]
Since $M$ was arbitrary, we further conclude that  
\[ \lim\inf_{n\rightarrow\infty} \frac{Z_n}{n} = -\infty \]
almost surely and by considering the process $(-Z_n)$ instead of
$(Z_n)$, we also get that the upper limit of $\frac{Z_n}{n}$ is
$+\infty$ almost surely.

\subsection{The drift function}
Based on Lemma~\ref{lem:22xp,2} we can define combinatorial
displacements for all branches induced by $\tilde{H}$ by simply adding
the displacements for all branches of $\tilde{H}$ that occur in the
composition. It will then remain true that if a branch $\zeta$ of the
induced map has combinatorial displacement $p$, then $\tau^p\zeta$
belongs to the tower. 

\begin{defi}\label{defi:22xp,1}
Given a map ${\cal J}$ induced by $\tilde{H}$, define its {\em drift
function} $\Delta_{\cal J}$ to be equal on the domain of any branch of
$\cal J$ to the combinatorial displacement of that branch. 
\end{defi} 

Define 
\[ \gamma_n(x) := \left\{\begin{array}{ccc} 0 & \mbox{if} &  x\geq n\\
                                    x & \mbox{if} & -n < x < n\\
                                    0 & \mbox{if} & x\leq -n \; .
                          \end{array} \right. \]

Fix one of the four pieces $K_{\pm}, L_{\pm}$ and denote it $P$. The
set $M_P$ consists of all probabilistic measures $\mu$ on $P$ which can be
obtained as $\mu = \zeta_*( Q\lambda )$ where  $\zeta$ is a branch of 
$\tilde{J}^n$, for any $n\geq 1$, which
maps onto $P$, $\lambda$ is the
Lebesgue measure and $Q$ a normalizing constant equal to the
reciprocal of the area of the domain of $\zeta$.

Define the function $\Delta^0$ as follows: $\Delta^0(z) = n$ if $z\in
V_{k,n}$ for $k\neq 0,1$ and $n\in \ZZ$ and $0$ otherwise. Then
$\Delta^0$ coincides with $\Delta_{\tilde{H}}$ except on the ``central
rows'' $V_{k,n}$, $k=0,-1$. The idea of the proposition to follow is
that $\Delta^0$ is a good approximation of the much more complicated
function $\Delta_{\cal J}$ and that $\Delta^0$ has certain helpful
properties.

\begin{prop}\label{prop:22xp,2}
If $P$ is one of $K_{\pm}, L_{\pm}$, then there exist positive $Q_1,
Q_2, Q_3$ so that for every $\mu\in M_P$:
\begin{itemize}
\item
\[ \int_P |\Delta_{\tilde{\cal J}} - \Delta^0|^{\frac{3}{2}}\, d\mu < Q_1 \; ,\] 
\item
for every  $n\neq 0$, 
\[ Q_2^{-1} |n|^{-2} < \mu(\{ x\in P :\: \Delta^0 = n\}) < Q_2 |n|^{-2} \; ,\]
\item
for all $n$ 
\[ |\int_P \gamma_n \circ \Delta^0\, d\mu | \leq Q_3\; .\]
\end{itemize}
\end{prop}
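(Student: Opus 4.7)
The plan is to reduce all three estimates to the analogous statements for normalized Lebesgue measure $\lambda_P$ on $P$ via K\"obe distortion, and then to verify those by explicit computation in the Fatou-coordinate parametrization $w=h(z)$ of $\Omega_-$. Every $\mu\in M_P$ is, by definition, a normalized pushforward by some branch $\zeta$ of $\tilde J^n$ onto $P$; by Proposition~\ref{prop:22xp,1}, $\zeta$ extends univalently to a fixed neighborhood of $\overline{P}$ independent of $n$ and of $\zeta$. K\"obe's theorem then yields a universal $C>0$ with $C^{-1}\lambda_P\leq\mu\leq C\lambda_P$, so each of the three inequalities reduces to the analogous statement for $\lambda_P$, at the cost of possibly worse constants.

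For part~(ii), I would pass to the $w=h(z)$ coordinate in $C_h$. There $V_{k,n}$ is essentially a rectangle $\{k\pi<\Im w<(k+1)\pi,\ \Re w\in I_n\}$ with $I_n$ a fixed-width interval centered at $n\log\tau$, and the Lebesgue area of $V_{k,n}\cap P$ in $z$-coordinates equals $\int_{h(V_{k,n}\cap P)}|(h^{-1})'(w)|^2\,dA(w)$. Summing over $k\neq 0,-1$ consolidates the $\Im w$-strips into the full vertical strip $\{\Re w\in I_n\}\cap h(P)$. Because $h$ maps $\Omega_-$ onto a \emph{slit} plane (the signature of a multiplicity-two parabolic at $x_0$), the Fatou-coordinate asymptotic gives $|(h^{-1})'(w)|^2\asymp|w|^{-3}$ at infinity in $C_h$; vertical integration then produces
\[
\int_{\RR}\bigl((n\log\tau)^2+t^2\bigr)^{-3/2}\,dt\ \asymp\ |n|^{-2},
\]
which is the bound required.

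Part~(iii) I would handle by refining the Fatou expansion one term further to obtain the antisymmetry estimate $\lambda_P(\{\Delta^0=n\})-\lambda_P(\{\Delta^0=-n\})=O(|n|^{-3})$; with this the series $\sum_{|n|\leq N} n\,[\lambda_P(\{\Delta^0=n\})-\lambda_P(\{\Delta^0=-n\})]$ converges absolutely, uniformly in $N$, and the distortion transfer gives the analogous bound for every $\mu\in M_P$. For part~(i), Lemma~\ref{lem:22xp,2} reduces $\Delta_{\tilde{\cal J}}-\Delta^0$ to the extra combinatorial displacement contributed on $\tilde H^{-1}(W_{0,0}\cup W_{-1,0})$ by $\tilde C$, namely the displacement accrued along the $\tilde H$-orbit inside the central pieces up to first entry into $L_+$. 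Using part~(ii) for the measure of each $V_{k,n}$ together with the Markov structure of $\tilde H$ on the central pieces (via the bounded-distortion extension of Proposition~\ref{prop:22xp,1} applied to $\tilde C$), the distribution of this correction has tails strictly better than $k^{-2}$, enough to give the finite $L^{3/2}$-norm claimed in~(i).

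The main obstacle is the Fatou-coordinate asymptotic underlying part~(ii): one needs $|(h^{-1})'(w)|^2\asymp|w|^{-3}$ at infinity in $C_h$ uniformly in direction, careful matching along the two sides of the slit where the asymptotic could degenerate, and verification that $h(P)$ intersects each vertical strip $\{\Re w\in I_n\}$ in a set of the expected area in $z$-coordinates. Once that estimate is in hand, parts (i) and (iii) follow by the routine manipulations sketched above together with the reflection symmetry $z\mapsto\bar z$ of $\Omega_-$.
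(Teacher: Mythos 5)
Your reduction of all three bullets to normalized Lebesgue measure via the two-sided density bound $C^{-1}\lambda_P\leq\mu\leq C\lambda_P$ works for the first two claims (the paper makes the same observation), but it fails for the third, and this is the heart of the proposition. The integrand $\gamma_n\circ\Delta^0$ is not of one sign, and by your own part~(ii) its positive and negative parts each have $\lambda_P$-integral of order $\log n$; the boundedness of $\int_P\gamma_n\circ\Delta^0\,d\lambda_P$ is purely a cancellation phenomenon. Knowing only $C^{-1}\leq d\mu/d\lambda\leq C$ therefore gives
\[
\Bigl|\int_P\gamma_n\circ\Delta^0\,d\mu\Bigr|\ \leq\ C\Bigl|\int_P\gamma_n\circ\Delta^0\,d\lambda_P\Bigr|+(C-C^{-1})\,O(\log n),
\]
which is useless: ``distortion transfer'' of the cancellation does not come for free. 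What is needed is uniform control of the \emph{oscillation} of the densities $d\mu/d\lambda$ near the parabolic point, not just their size. In the paper this is Lemma~\ref{lem:25fp,1}: the log-densities are uniformly Lipschitz (a consequence of the univalent extension in Proposition~\ref{prop:22xp,1}), and combined with the diameter estimates $\diam(V_{k,n}\cup V_{k,n+1})=O(|n|^{-3/2})$ and $\dist(V_{\pm n},x_0)=O(|n|^{-1/2})$ this gives $|c_n(\mu)-c_{n+1}(\mu)|=O(|n|^{-3/2})$ and $|c_n(\mu)-c_{-n}(\mu)|=O(|n|^{-1/2})$ for $c_n(\mu)=\mu(V_n)/\lambda(V_n)$; an Abel summation then yields the uniform bound. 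Your antisymmetry estimate for $\lambda_P$ is the right kind of input, but without the oscillation control on the densities it does not survive the passage to general $\mu\in M_P$. Moreover, the exact cancellation of the leading $C|n|^{-2}$ terms between $n\to+\infty$ and $n\to-\infty$ is itself nontrivial: the attracting Fatou coordinate expansion is only valid in a sector, and the matching across the parabolic point requires the repelling Fatou coordinate and the horn map $\Psi=h_r\circ h_a^{-1}$ (Lemma~\ref{lem:24fp,5}); you flag this as ``the main obstacle'' but do not resolve it, and it is precisely where the same leading constant $L$ for both ends is established.

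Two smaller points on part~(i). First, $\Delta_{\tilde{\cal J}}-\Delta^0$ has \emph{two} contributions: the $\tilde C$-correction you describe, and the discrepancy $\Delta_{\tilde H}-\Delta^0$ on the central rows $V_{0,n},V_{-1,n}$, where $\Delta^0$ is set to $0$ but the combinatorial displacement of $\tilde H$ is $n-p(z)$ and can be large; its integrability rests on the parabolic area estimate $|V_{0,n}|\leq K|n|^{-3}$ (Lemmas~\ref{lem:24fp,1} and~\ref{lem:24fp,4}), which your sketch omits. Second, ``tails strictly better than $k^{-2}$'' is not sufficient for a finite $L^{3/2}$-norm: one needs the probability of displacement $\approx k$ to decay faster than $k^{-5/2}$, and the paper in fact obtains $k^{-3}$ from the Fatou-coordinate area estimate. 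With these ingredients made explicit, your parts (i) and (ii) follow essentially the paper's route; part (iii) as written has a genuine gap.
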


Observe that the first two properties would be enough to prove for the Lebesgue
measure instead of $\mu$, since the densities $\frac{d\mu}{d\lambda}$
bounded for all $\mu\in M_P$ in view of bounded distortion.

The last property deserves attention. Although
$\Delta^0$ is non-integrable in view of the second claim, its
integrals in a certain principal value sense remain bounded. Also,
this one would not be enough to prove for the Lebesgue measure as it
involves cancellations. 

\paragraph{Proof of Proposition~\ref{prop:22xp,2}.}
Let us start with the following general Lemma. 


\begin{lem}\label{lem:24fp,1}
Let $\Phi$ be a holomorphic function defined on a neighborhood of $0$,
with the power series expansion at $0$ in the form
\[ \Phi(z) = z + az^3 + O(|z|^4) \]
with some complex $a\neq 0$. Choose $\tilde h$ to be its Fatou coordinate, so that 
\[ \tilde h\circ\Phi(z) = \tilde h(z)+1 \] for all $z$ in an attracting petal of
$0$. Let $f,g$ be continuous functions defined for $x>r>0$ for some
$r$ such that 
$f(x) > g(x)$ for all $x$ and $1$-periodic.

There exists $K$ so that for any $n>r$ the area of the set 
\[ \tilde h^{-1}  ( \{ x+iy :\: n<x<n+1, g(x) < y < f(x)\} ) \]
is bounded above by $Kn^{-3}$. 
\end{lem}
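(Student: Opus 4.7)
The proof rests on the classical asymptotic behavior of the Fatou coordinate at a degenerate parabolic fixed point. Since $\Phi(z) = z + az^3 + O(z^4)$ with $a\neq 0$, the substitution $w = -1/(2az^2)$ approximately conjugates $\Phi$ on each attracting petal to $w\mapsto w+1+O(|w|^{-1})$, as a direct computation with power series shows. A standard telescoping argument for the Abel equation then yields, on the attracting petal whose image contains $\{\Re w > r\}$,
\[ \tilde h(z) = -\frac{1}{2az^2} + O(\log|z|)\quad\mbox{as }z\to 0, \]
or equivalently, writing $z=\tilde h^{-1}(w)$,
\[ |\tilde h^{-1}(w)|^2 = \frac{1}{2|a||w|}\,(1+o(1))\quad\mbox{as }|w|\to\infty. \]

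Differentiating, $\tilde h'(z) = \frac{1}{az^3}(1+o(1))$, so the inverse function theorem gives
\[ |(\tilde h^{-1})'(w)|^2 = |a|^2 |\tilde h^{-1}(w)|^6\,(1+o(1)) \leq C\,|w|^{-3} \]
once $|w|$ exceeds some threshold $N_0\ge r$.

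Now the area follows by a direct change-of-variables:
\[ \mbox{area}\bigl(\tilde h^{-1}(R_n)\bigr) = \int_{R_n} |(\tilde h^{-1})'(w)|^2\,dA(w), \]
where $R_n = \{x+iy : n<x<n+1,\ g(x)<y<f(x)\}$. Because $f$ and $g$ are continuous and $1$-periodic, they are uniformly bounded by some constant $M$; hence on $R_n$ we have $|w|\geq n$ and the Euclidean area of $R_n$ is at most $2M$. Consequently, for $n>N_0$,
\[ \mbox{area}\bigl(\tilde h^{-1}(R_n)\bigr) \leq 2MC\,n^{-3}. \]
For the finitely many values $r<n\le N_0$, the preimages $\tilde h^{-1}(R_n)$ lie in a fixed compact subset of the petal and so have uniformly bounded area; enlarging $K$ by the factor $N_0^3$ absorbs them into a single uniform bound.

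The main obstacle is the rigorous derivation of the Fatou-coordinate asymptotic in the cubic-degenerate (two-petal) case, i.e.~justifying that the error between $\tilde h$ and $-1/(2az^2)$ is only $O(\log|z|)$. This is classical parabolic-germ theory: one iterates the approximate conjugacy $w\mapsto w+1+O(|w|^{-1})$ and checks that the resulting telescoping series converges on the petal. Given this, the derivative bound and the area estimate are routine.
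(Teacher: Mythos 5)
Your argument is essentially the paper's: both rest on the classical asymptotic $|(\tilde h^{-1})'(w)| \sim \tilde L\,|w|^{-3/2}$ for the inverse Fatou coordinate of a cubic parabolic germ, followed by a change of variables for area, with the continuity and $1$-periodicity of $f,g$ used only to keep the Euclidean size of the region $R_n$ bounded (the paper covers $R_n$ by a bounded number of unit squares whose preimages have area $O(n^{-3})$; you integrate $|(\tilde h^{-1})'|^2$ directly over $R_n$ — an immaterial difference). One small inaccuracy that does not affect the conclusion: in the chart $w=-1/(2az^2)$ the conjugated map is $w\mapsto w+1+O(|w|^{-1/2})$ and the Fatou coordinate differs from $w$ by $O(|w|^{1/2})$ in general (compare $\phi_a(w)=w+O(|w|^{1/2})$ in the proof of Lemma~\ref{lem:24fp,5}), not by $O(\log|z|)$; the leading-order derivative asymptotic, which is all your estimate uses, is unaffected.
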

\begin{proof} 
It is well known (see also the proof of Lemma~\ref{lem:24fp,5})
that the 
\[ |(\tilde h^{-1})'(z)| = \tilde L |z|^{-3/2} + o(|z|^{-3/2})\; .\]
Hence, the preimage by $\tilde h$ of any square \[\{ x+iy :\: n<x<n+1,
c<y<c+1\}\] for $n$ large has area bounded by $K_1 n^{-3}$ and the
hypotheses of continuity and 1-periodicity for $f,g$, any region in
the form $ \{ x+iy :\: n<x<n+1, g(x) < y < f(x) \} $ is contained in 
the union of $m$ such squares with $m$ independent of $n$. 
\end{proof}

Observe that under $\tilde h^{-1}$, the graphs of $f$ and $g$ are mapped to
curves invariant under $\Phi$ and tangent to the attracting direction
of $\Phi$ at $0$ and, conversely, any two such curves give rise to
functions, $f,g$ which satisfy the hypotheses of
Lemma~\ref{lem:24fp,1}. 

\begin{lem}\label{lem:24fp,2}
Function $\Delta_{\tilde{C}}^{\frac{3}{2}}$ is integrable with respect to the
Lebesgue measure on $W_{0,0}$. 
\end{lem}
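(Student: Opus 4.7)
The plan is to identify the level sets of $\Delta_{\tilde C}$ on $W_{0,0}$ with fundamental strips of the Fatou coordinate for $G^2$ at the parabolic fixed point $x_0$ supplied by item~(6) of Theorem~\ref{prop}, apply Lemma~\ref{lem:24fp,1} to bound their areas, and sum.

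First I would control $\Delta_{\tilde C}$ dynamically. On the central pieces $W_{0,0}\cup W_{-1,0}$ we have $\tilde H=\tau^{-1}H=G_{-1}$, whose combinatorial displacement is $1$ (this is exactly the computation in the proof of Lemma~\ref{lem:2jp,1}). Since $\tilde C$ is the first-entry map to $L_+$ under iteration of $\tilde H$, and since by Lemma~\ref{lem:22xp,2} displacements add under composition, the event $\Delta_{\tilde C}(z)=n$ amounts (up to an $O(1)$ additive contribution from the final step) to saying that the first $n$ iterates of $\tilde H$ at $z$ remain in $W_{0,0}\cup W_{-1,0}$; equivalently, the $G_{-1}$-orbit of $z$ survives $n$ steps in the central region before escaping. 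Conjugating by $\tau$, this becomes a statement about how long the $G$-orbit of $\tau z\in\Omega_-$ hugs the parabolic fixed point $x_0$.

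Next I would pass to the Fatou coordinate. By item~(6) of Theorem~\ref{prop}, $h\circ G=h-\log\tau$, so $h/(-2\log\tau)$ is an attracting Fatou coordinate for $G^2$ at $x_0$. In this coordinate, surviving $\asymp n$ iterates of $G$ before escape is equivalent to the initial point lying in a fundamental vertical strip at depth $\asymp n$; the upper and lower boundaries of the strip are the $h$-images of the boundary arcs of the central region, which are $G^2$-invariant and therefore graphs of continuous $1$-periodic functions in the Fatou variable. Lemma~\ref{lem:24fp,1} (whose parabolic-germ hypothesis is supplied by the asymptotic $|(h^{-1})'(w)|\sim \tilde L|w|^{-3/2}$ that is invoked in the proof of Lemma~\ref{lem:24fp,5}) then yields
\[
\lambda\bigl(\{z\in W_{0,0}:\;\Delta_{\tilde C}(z)\asymp n\}\bigr) \;\leq\; K\, n^{-3}
\]
for a constant $K$ independent of $n$. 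Summing,
\[
\int_{W_{0,0}} \Delta_{\tilde C}^{\,3/2}\,d\lambda \;\leq\; K\sum_{n\geq 1} n^{3/2}\cdot n^{-3} \;=\; K\sum_{n\geq 1} n^{-3/2} \;<\; \infty,
\]
which proves the lemma.

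The main obstacle I expect is the bookkeeping in the first step: one must verify that iterates of $\tilde H$ which momentarily leave the central pieces (e.g.\ landing in $D_-\setminus L_+$ and later returning to $W_{0,0}\cup W_{-1,0}$) contribute only $O(1)$ both to $\Delta_{\tilde C}$ and to the Fatou depth, so that $\{\Delta_{\tilde C}=n\}$ is indeed the $h^{-1}$-image of a depth-$n$ strip up to a bounded horizontal shift and a bounded multiplicative factor on area. A secondary minor point is keeping the conversion between $G$- and $G^2$-iterations straight, since the Fatou coordinate naturally linearizes $G^2$ while $\tilde H$ corresponds to a single step of $G$; this only changes the constant $K$.
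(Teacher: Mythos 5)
Your overall strategy is the paper's strategy (reduce to the escape time from a parabolic point, get an area bound $Kn^{-3}$ for the level sets from Lemma~\ref{lem:24fp,1}, and sum $\sum n^{3/2}n^{-3}<\infty$), but the key identification is made at the wrong parabolic point and with the wrong Fatou coordinate, and as written that step fails. The germ governing $\tilde{C}$ is $\tilde{H}|_{W_{0,0}}=\tau^{-1}G\tau$, whose degenerate fixed point is $\tau^{-1}x_0$, not $x_0$; moreover your claim that the conjugated orbit starts at $\tau z\in\Omega_-$ is false, since $W_{0,0}=V_{0,0}\setminus\tau^{-1}\Omega_-$ gives $\tau W_{0,0}\cap\Omega_-=\emptyset$. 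Near $\tau^{-1}x_0$ the central piece $W_{0,0}$ lies in the \emph{complement} of $\tau^{-1}(\Omega_-\cup\Omega_+)$, i.e.\ in the repelling sectors of $\tilde{H}^2$, which is exactly why points eventually escape. Consequently the sets $\{\Delta_{\tilde C}\asymp n\}$ are fundamental crescents for the \emph{repelling} Fatou coordinate of $\tilde{H}^2$ at $\tau^{-1}x_0$ (equivalently, for the attracting coordinate of the inverse germ). They are not $h^{-1}$-images of strips at depth $\asymp n$: the coordinate $h$ of item~(4)/(6) of Theorem~\ref{prop} is the attracting Fatou coordinate of $G^2$ at $x_0$, it parametrizes all of $\Omega_-$, and at the interior point $\tau^{-1}x_0$ it is univalent with nonvanishing derivative, so in the $h$-plane the sets $\{\Delta_{\tilde C}=n\}$ accumulate at a finite point and $h$ yields no $|w|^{-3/2}$ contraction there. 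The asymptotics you quote from the proof of Lemma~\ref{lem:24fp,5} concern that coordinate at $x_0$ and are the right tool for estimating $\lambda(V_n)$, not for the escape sets of $\tilde{C}$.

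The repair is short and is precisely what the paper does: apply Lemma~\ref{lem:24fp,1} to the inverse germ $\Phi=\tilde{H}^{-1}$ (in effect to $\tilde{H}^{-2}$, which has the form $z+a z^3+\cdots$ at $\tau^{-1}x_0$), using that near the fixed point the boundary of $W_{0,0}$ is the boundary of $\tau^{-1}(\Omega_-\cup\Omega_+)$, invariant under $\tilde{H}^2$, hence given by $1$-periodic graphs in that Fatou coordinate; this gives $\lambda(S_n)\leq Kn^{-3}$ for the set of points staying $n$ steps near the fixed point, while escape from $W_{0,0}$ after leaving a fixed neighborhood takes a bounded number of further iterates, so $\Delta_{\tilde C}\leq n+Q$ on $S_n$. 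With that substitution your $O(1)$ bookkeeping and the final summation $\sum n^{3/2}n^{-3}$ are exactly the paper's conclusion.
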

\begin{proof}
By the definition of $\tilde{C}$, $\Delta_{\tilde{C}}(z)$ is equal to
the number of iterates of $\tilde{H} = \tau^{-1} G \tau$ needed to map
$z$ outside of $W_{0,0} \cup W_{-1,0}$, which is bounded above by
twice the number of iterates of $\tilde{H}^2$ needed to map $z$
outside of $W_0,0$. $\tilde{H}$ has a degenerate neutral fixed point
at $\tau^{-1}x_0$ in a neighborhood of the fixed point $W_{0,0}$ is
just the complement of $\tau^{-1}(\Omega_-\cup\Omega_+)$ whose
boundary is mapped invariant under $G^2$ if  neighborhood is small
enough. Once
$z$ leaves that fixed neighborhood of the fixed point, it will leave 
$W_{0,0}$ after a bounded number of further iterations. If we apply
Lemma~\ref{lem:24fp,1} to $\Phi := \tilde{H}^{-1}$ we get that the
measure of the set $S_n$ of points $z$ which stay in the neighborhood for
exactly $n$ iterates of $\tilde{H}^2$ is bounded by $Kn^{-3}$. Since
$\Delta_{\tilde{C}}$ on $S_n$ is bounded by $n$ plus a $Q$, the
the integral of $\Delta_{\tilde{C}}^{\frac{3}{2}}$ over $W_{0,0}$ is bounded by 
\[ 2C |W_{0,0}| + 2K \sum n^{-\frac{3}{2}} < \infty \; .\]
\end{proof}

\begin{lem}\label{lem:24fp,3}
For a certain $Q_4$ 
\[ \int_P |\Delta_{\tilde {\cal J}} - \Delta_{\tilde{H}}|^{\frac{3}{2}} \, d\lambda < Q_4 \; .\] 
\end{lem}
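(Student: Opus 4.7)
}
The plan is to reduce $\Delta_{\tilde{\mathcal J}} - \Delta_{\tilde H}$ to $\Delta_{\tilde C}\circ \tilde H$ supported on a controlled set, pull it back by branches of $\tilde H$ using bounded distortion, and conclude by Lemma~\ref{lem:24fp,2}.

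First I would observe that, by the definition of $\tilde J$ and the additivity of combinatorial displacement under composition (Lemma~\ref{lem:22xp,2}),
\[ \Delta_{\tilde{\mathcal J}}(z) - \Delta_{\tilde H}(z) = \Delta_{\tilde C}(\tilde H(z))\cdot \mathbf{1}_{\tilde H(z) \in W_{0,0}\cup W_{-1,0}}. \]
Thus the quantity to bound collapses to
\[ \int_{P \cap \tilde H^{-1}(W_{0,0}\cup W_{-1,0})} |\Delta_{\tilde C}\circ \tilde H|^{3/2}\, d\lambda, \]
which I would decompose over the countably many branches $\zeta : U_\zeta \to V_\zeta$ of $\tilde H$ whose domain meets $P$, where each $V_\zeta$ is one of $B_\pm, D_\pm$. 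Since the $U_\zeta$ are pairwise disjoint subsets of $P$, one has $\sum_\zeta |U_\zeta|\le |P|$.

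Next I would change variables $w=\tilde H(z)$ on each branch. The key input is Proposition 1 of~\cite{LSw2} applied at $n=1$: its ``exceptional'' branches are precisely the central ones (those with domain in $W_{0,0}\cup W_{-1,0}$), and because $P=K_\pm$ or $L_\pm$ is by construction disjoint from the closure of the central pieces, no relevant $\zeta$ is exceptional. Hence every $\zeta$ admits a univalent extension to a fixed neighborhood of $V_\zeta$, so K\"obe's theorem yields the uniform distortion bound $|(\zeta^{-1})'(w)|^2\le C|U_\zeta|/|V_\zeta|$. The contribution of a single branch is therefore at most
\[ \frac{C|U_\zeta|}{|V_\zeta|}\int_{W_{0,0}\cup W_{-1,0}} |\Delta_{\tilde C}|^{3/2}\, d\lambda. \]

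Finally, summing over branches and using $|V_\zeta|\ge c:=\min(|B_\pm|,|D_\pm|)>0$ together with $\sum_\zeta |U_\zeta|\le |P|$, the total is bounded by $C|P|c^{-1}\int_{W_{0,0}\cup W_{-1,0}}|\Delta_{\tilde C}|^{3/2}\, d\lambda$, which is finite by Lemma~\ref{lem:24fp,2} applied to $W_{0,0}$ and, by mirror symmetry, to $W_{-1,0}$. The only real point requiring care is the uniformity of the distortion estimate across the infinite collection of contributing branches; this is handled exactly because $P$ avoids the central pieces, so the exceptional branches of Proposition 1 of~\cite{LSw2} never appear in the sum.
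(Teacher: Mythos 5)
Your proposal is correct and follows essentially the same route as the paper: identify $\Delta_{\tilde{\cal J}}-\Delta_{\tilde H}$ with $\Delta_{\tilde C}\circ\tilde H$ on the preimage of the central pieces, change variables along branches of $\tilde H$, control the Jacobian by bounded distortion (via the univalent extensions from Proposition 1 of~\cite{LSw2}), and invoke Lemma~\ref{lem:24fp,2}. You are in fact somewhat more explicit than the paper about summing the Koebe bounds over the countably many contributing branches, a point the paper compresses into the remark that the derivative of $\tilde H^{-1}$ is bounded on the central pieces; your only cosmetic slip is calling the branch domains subsets of $P$, but since they are disjoint subsets of the bounded set $\Omega_-$ the area bound used in the summation still holds.
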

\begin{proof}
Since $\tilde{\cal J}$ is either $\tilde{H}$, or $\tilde{C} \circ \tilde{H}$
if $\tilde{H}$ maps into $W_{0,0}\cup W_{-1,0}$,  
\[ \Delta_{\tilde{\cal J}} = \Delta_{\tilde{H}} + \Delta_{\tilde{C}} \]
where we put $\Delta_{\tilde{C}}$ equal to $0$ outside the domain of
$\tilde{C}$.

\[ \int_P |\Delta_{\tilde{\cal J}} - \Delta_{\tilde{H}}|^{\frac{3}{2}}\, d\lambda =  
\int_P |\Delta_{\tilde{C}}(\tilde{H}(z))|^{\frac{3}{2}}\, d\lambda(z)  = \]
\[ =\int_{(W_{0,0}\cup W_{0,-1}) \cap P} |\Delta_{\tilde{C}}(w)|^{\frac{3}{2}}
 |(H^{-1})'(w)|^3 \, d\lambda(w) \; .\] 

The derivative of $\tilde{H}^{-1}$ is bounded on the central
pieces, since $\tilde{H}$ is univalent and maps onto a neighborhood of
their closure. Thus, $\Delta_{\tilde{C}}$ is multiplied by a bounded
factor and hence, in view of
Lemma~\ref{lem:24fp,2}, the integral is finite. 
\end{proof}

\begin{lem}\label{lem:24fp,4} 
There is $Q_5$ so that 
\[ \int_P |\Delta_{\tilde{H}} - \Delta^0|^{\frac{3}{2}}\, d\lambda < Q_5 \; .\]
\end{lem}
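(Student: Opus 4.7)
The plan is to decompose the integral $\int_P |\Delta_{\tilde H}-\Delta^0|^{3/2}\,d\lambda$ along the Markov partition by $V_{k,k'}$ and handle non-central rows trivially, then control central rows via the parabolic Fatou-coordinate asymptotics at $x_0$. On any non-central row $V_{k,k'}$ with $k\notin\{0,-1\}$ we have $W_{k,k'}=V_{k,k'}$ and $\tilde H=\tau^{-k'-1}H$, so the combinatorial displacement is $k'+1$ while $\Delta^0\equiv k'$; hence $|\Delta_{\tilde H}-\Delta^0|\equiv 1$ on this union and the total contribution is bounded by $|\Omega_-|<\infty$.

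The central rows $V_{0,k'}\cup V_{-1,k'}$, on which $\Delta^0\equiv 0$, are treated in the Fatou-type coordinate $w=h(z)$ of Theorem~\ref{prop}(4). The identity $h\circ G=h-\log\tau$ shows that $h(V_{0,k'})$ is confined to a rectangle of bounded side lengths located near $\Re w\asymp k'\log\tau$. The parabolic asymptotic $|(h^{-1})'(w)|\asymp|w|^{-3/2}$ at the parabolic fixed point of $G^2$ (exactly as in the proof of Lemma~\ref{lem:24fp,1}) then yields $|V_{0,k'}|\lesssim(1+|k'|)^{-3}$. On the bulk sub-piece $W_{0,k'}\subset V_{0,k'}$ one still has $\tilde H=\tau^{-k'-1}H$ with displacement $k'+1$, so
\[ \sum_{k'\in\ZZ}|k'+1|^{3/2}|W_{0,k'}|\;\lesssim\;\sum_{k'}(1+|k'|)^{-3/2}\;<\;\infty, \]
and the same bound applies to $V_{-1,k'}$ by symmetry.

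The only remaining contribution comes from the exclusion pieces $Y_{k',p,\pm}:=G^{-k'}(\tau^{-p}B_{\pm})\subset V_{0,k'}\setminus W_{0,k'}$ (for $k'$ even non-negative, $p\geq 1$), where $\tilde H=\tau^pG^{k'}$. Since $G=H\tau^{-1}$ satisfies $\tau G=\tau H\tau^{-1}=H_1$, it has displacement $1$ with associated map $H_1$; additivity under composition (Lemma~\ref{lem:22xp,2} and the ensuing remark) then gives $\Delta_{\tilde H}\big|_{Y_{k',p,\pm}}=k'-p$. Iterating $h\circ G=h-\log\tau$ one obtains $h(Y_{k',p,\pm})=h(\tau^{-p}B_{\pm})+k'\log\tau$. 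Since $h$ is holomorphic near $0\in\Omega_-$, $|h(\tau^{-p}B_{\pm})|\lesssim\tau^{-2p}$, and combining with the bound $|(h^{-1})'(w)|^2\lesssim(k')^{-3}$ on the translated region yields $|Y_{k',p,\pm}|\lesssim\tau^{-2p}(k')^{-3}$. The double sum
\[ \sum_{k'\text{ even}\geq 2}\sum_{p\geq 1}|k'-p|^{3/2}\tau^{-2p}(k')^{-3} \]
is finite (the inner sum is dominated by a geometric series and the outer one by $\sum(k')^{-3/2}$), and adding the three contributions produces the desired $Q_5$.

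The main obstacle is the area estimate for the exclusion pieces $Y_{k',p,\pm}$: it requires combining the Fatou-coordinate asymptotics at the parabolic fixed point $x_0$ with the linear-in-$w$ conjugacy $h\circ G=h-\log\tau$ to decouple the $k'$ and $p$ dependencies. Once the decoupled estimate $|Y_{k',p,\pm}|\lesssim\tau^{-2p}(k')^{-3}$ is in hand and the displacement formula $k'-p$ is identified via additivity of displacements, the summation is routine.
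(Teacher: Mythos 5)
Your proposal is correct in substance and shares the skeleton of the paper's argument: isolate the central rows $V_{0,k'}\cup V_{-1,k'}$ (off them the paper takes $\Delta_{\tilde H}=\Delta^0$ exactly; your extra $+1$ is just an off-by-one in normalizing the combinatorial displacement and is harmless, contributing at most $|\Omega_-|$), use $|V_{0,k'}|=O(|k'|^{-3})$ from the parabolic Fatou-coordinate asymptotics, note the displacement is of size $|k'|$ on $W_{0,k'}$ and $k'-p$ on the exclusion pieces, and sum. Where you genuinely diverge is the area estimate for the exclusion pieces $G^{-k'}(\tau^{-p}B_{\pm})$: the paper works in the $z$-plane, using bounded distortion of $G^{k'-1}$ into a neighborhood of the critical point $\tau x_0$ of $G$, and gets relative measure $O(|p|^{-3})$ inside $V_{0,k'}$; you work in the $w=h(z)$ coordinate, where $G^{-k'}$ becomes the translation by $k'\log\tau$, and use conformality of $h$ at $0$ to get $\mbox{area}\,(h(\tau^{-p}B_{\pm}))=O(\tau^{-2p})$, hence $|Y_{k',p,\pm}|=O(\tau^{-2p}(k')^{-3})$ --- an exponentially stronger bound in $p$; either bound makes the double sum converge, and your route arguably decouples the two parameters more cleanly. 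Two small cautions, both at the level of rigor the paper itself adopts: the bound $|(h^{-1})'(w)|^2=O((k')^{-3})$ is needed as $k'\to+\infty$, which relative to the attracting Fatou coordinate of Lemma~\ref{lem:24fp,1} is the repelling direction, so strictly one should invoke the repelling coordinate and horn-map analysis from the proof of Lemma~\ref{lem:24fp,5} (the paper's own citation of Lemma~\ref{lem:24fp,1} for $|V_{0,k}|\leq Kk^{-3}$ glosses over the same point); and the conformality step uses that $\tau^{-1}\overline{\Omega_-}$ is compactly contained in $\Omega_-$ (true, since $\Omega_-\setminus\tau^{-1}\overline{\Omega_-}$ is a genuine ring), while your summation should also include the trivially finite $k'=0$ exclusion term that the range $k'\geq 2$ omits.
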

\begin{proof}
Let $\chi_0$ be the characteristic function of the ``central rows'',
i.e. the union of pieces $V_{k,n}$, $k=0,-1$, $n\in \ZZ$.   

Clearly, 
\[ \Delta_{\tilde{H}} - \Delta^0 = \chi_0\Delta_{\tilde{H}}\; .\] 

Recall that on $W_{0,k}, W_{-1,k}$, the combinatorial displacement is
just $k$. When $k$ is positive and even, then $G^{k}$ is used to map 
$V_{0,k} \setminus W_{0,k}$ onto $\tau^{-p} B_+$ and the combinatorial
displacement is $k-p$.  The dynamics on $V_{-1,k}$ is the mirror image
of this. On $V_{0,k}$,  
$\Delta_{\tilde{H}}$ is $k-p(z)$ where $p(z)$ is zero unless $k$ is
positive and even, in which case it given by the condition
$z \in G^{-k} \tau^{-p(z)} B_+$. Now $G^{k-1}$ maps $W_{0,k}$ with
bounded distortion into a neighborhood of $\tau x_0$, which is the
critical of $G$. Since $G^k$ is univalent on $W_{0,k}$, it follows  
that
the area of the set of $z\in V_{0,k}$ such that $p(z) = p$ is bounded by 
$Q_1 |V_{0,k}| |p|^{-3}$. It follows that the integral of
$|\Delta_{\tilde{H}}|^{\frac{3}{2}}$ over $V_{0,k}$ is bounded by
$|V_{0,k}|(|k|^{\frac{3}{2}}+10 Q_1)$. By Lemma~\ref{lem:24fp,1},
$|V_{0,k}| \leq K k^{-3}$ so that integral of $|\Delta_{\tilde{H}}|^{\frac{3}{2}}$
over the union of all pieces $V_{0,k}, k\in \ZZ$ is finite. The same
reasoning is applied to pieces $V_{-1,k}$. 
\end{proof}

From Lemmas~\ref{lem:24fp,3} and~\ref{lem:24fp,4}, we derive the first
claim of Proposition~\ref{prop:22xp,2}.  

We will now deal with the remaining two claims which are only
concerned with the function $\Delta^0$.  

Start by defining sets $V_n = \bigcup_{k\not=0,-1} V_{k,n}$.



\begin{lem}\label{lem:24fp,5}     
There exist $C,K_1>0$, such that for all $n$

\[ \lambda(V_n) - C |n|^{-2} \leq K_1 |n|^{-5/2} \; .\]
\end{lem}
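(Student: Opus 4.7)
The plan is to compute $\lambda(V_n)$ by changing variables to the Fatou-coordinate $w=h(z)$. Since $h$ is univalent on $\Omega_-$ by item~(4) of Theorem~\ref{prop},
\[ \lambda(V_n) = \sum_{k\neq 0,-1}\int_{h(V_{k,n})}|(h^{-1})'(w)|^{2}\,d\lambda(w). \]
From the explicit formula for $h(V_{k,n})$ given in the text, each piece $h(V_{k,n})$ is a translate in the $w$-plane of one of two fixed reference regions, namely the principal-branch logarithms of $\tau B_+$ and $\tau B_-$: the shift is $n\log\tau$ in the real direction and a suitable multiple of $\pi$ in the imaginary direction, chosen according to the parity of $k$. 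Consequently, the $w$-area $|h(V_{k,n})|$ equals a constant $A_+$ or $A_-$ depending only on the parity of $k$, and every $w\in h(V_{k,n})$ satisfies $|w|^{2}=(n\log\tau)^{2}+(k\pi)^{2}+O(|n|+|k|)$.

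Next I would invoke the asymptotic of the inverse Fatou coordinate at the degenerate parabolic fixed point $x_0$ of $G^{2}$. Recall from item~(6) of Theorem~\ref{prop} that $h/(-2\log\tau)$ is the attracting Fatou coordinate there, and $G^{2}$ has a local jet of the form $z+a(z-x_0)^{3}+O((z-x_0)^{4})$ with $a\neq 0$, which is precisely the setting of Lemma~\ref{lem:24fp,1}. Resolving the formal Fatou series one order further than needed for that lemma gives
\[ |(h^{-1})'(w)|^{2}=L|w|^{-3}+O\bigl(|w|^{-7/2}\bigr) \]
for some $L>0$, uniformly as $|w|\to\infty$ in the attracting region. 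Plugging this into the summand yields
\[ \int_{h(V_{k,n})}|(h^{-1})'(w)|^{2}\,d\lambda(w)=\frac{L\,A_{\pm}}{\bigl((n\log\tau)^{2}+(k\pi)^{2}\bigr)^{3/2}}+O\!\Bigl(\bigl((n\log\tau)^{2}+(k\pi)^{2}\bigr)^{-7/4}\Bigr). \]

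Summing over $k$, the error contributes
\[ \sum_{k}\bigl((n\log\tau)^{2}+(k\pi)^{2}\bigr)^{-7/4}\leq C_{1}\int_{\RR}\bigl((n\log\tau)^{2}+u^{2}\bigr)^{-7/4}\,du=O(|n|^{-5/2}). \]
For the main term I would split the sum over $k$ by parity and apply Poisson summation to each of the two resulting series. The Fourier transform of $((n\log\tau)^{2}+u^{2})^{-3/2}$ is a modified Bessel function whose nonzero modes decay exponentially in $n$, so each Riemann sum equals the corresponding integral up to $O(e^{-cn})$; since $\int_{\RR}((n\log\tau)^{2}+u^{2})^{-3/2}\,du=2/(n\log\tau)^{2}$, the main contribution is $C|n|^{-2}$ for an explicit positive $C$ depending on $L$, $A_{\pm}$, and $\log\tau$. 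The two excluded terms $k=0,-1$ contribute $O(|n|^{-3})$, which is absorbed into the error, giving $\lambda(V_n)-C|n|^{-2}=O(|n|^{-5/2})$ as required.

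The delicate step is securing the sharp remainder $O(|w|^{-7/2})$ in the Fatou-coordinate expansion uniformly over the full part of $C_{h}$ that meets the $V_{k,n}$, in particular for $w$ whose imaginary part dominates its real part (corresponding to $|k|$ large relative to $n$). Inside any genuine attracting sector this follows from the classical Abel--Fatou construction, and one extends the bound to the rest of $C_{h}$ by pulling it back through iterates of $G^{2}$, using that $h$ is analytic on all of $\Omega_-$. If a logarithmic factor $\log|w|$ remains in the remainder, the tail $|k|\geq n^{1+\delta}$ can instead be handled by the crude bound $|V_{k,n}|=O((n^{2}+k^{2})^{-3/2})$ provided by Lemma~\ref{lem:24fp,1}, leaving only the principal sector where the clean asymptotic is available.
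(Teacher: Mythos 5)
Your computation for one sign of $n$ is essentially the paper's: change variables by $h$, note that $h(V_{k,n})$ are translates of a fixed ``rectangle'', use $|(h^{-1})'(w)|^{2}=\tfrac{L}{4}|w|^{-3}(1+O(|w|^{-1/2}))$, and compare the sum over $k$ with an integral to get $C|n|^{-2}+O(|n|^{-5/2})$. The gap is in the other sign of $n$, and it is exactly the step you flag as ``delicate'' but then misdiagnose. Since $G^{2}$ acts as $w\mapsto w-2\log\tau$ in the $h$-coordinate, the strips with $n\to-\infty$ recede in the attracting direction, where the classical Fatou expansion of $h^{-1}$ is valid; but for $n\to+\infty$ the strips $h(V_{k,n})$ sit at $\Re w\approx n\log\tau$, i.e.\ near the slit, which is the \emph{repelling} axis of the parabolic point. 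There the attracting Fatou coordinate has no asymptotic expansion of the form $L|w|^{-3}+O(|w|^{-7/2})$: its behaviour is governed by the horn map $\Psi=h_{r}\circ h_{a}^{-1}$, a nontrivial analytic invariant. Moreover the dangerous regime is not $|k|$ large relative to $n$ (directions near $\pm i\infty$ are inside the sectors of validity of both Fatou coordinates) but $|k|\lesssim n$ with $n>0$ large, which is precisely where the bulk of $\sum_k(n^{2}+k^{2})^{-3/2}$ comes from; so your fallback of a crude bound for $|k|\geq n^{1+\delta}$ does not touch the problem. Your other proposed patch, pulling the expansion back through iterates of $G^{2}$, also fails: the functional equation only translates $w$, and transporting the attracting expansion across the repelling axis amounts to controlling the infinite product of derivatives along an orbit passing the parabolic point, i.e.\ to controlling the horn map itself.

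The paper resolves this by writing, for $n>0$, $h_{a}^{-1}(\hat S_{n})=h_{r}^{-1}(\Psi(\hat S_{0})-n/2)$ and using the repelling coordinate's expansion together with the asymptotics $\Psi(w)=w+v_{\pm}+O(e^{-\pi|\Im w|})$ in half-planes; the extra translation $v_{\pm}$ is harmless by translation invariance of Lebesgue measure, and the exponentially small corrections are absorbed into $O(|n|^{-5/2})$. This is not a technicality one can wave away: the point of the lemma is that the \emph{same} constant $C$ works for $+n$ and $-n$ (it is what later makes the leading terms cancel in the principal-value bound for $\int\gamma_{N}\circ\Delta^{0}\,d\mu$), and that equality of constants is exactly the content delivered by the horn-map step. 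As written, your argument assumes the conclusion of that step by positing a uniform expansion of $(h^{-1})'$ over the whole relevant part of $C_{h}$, so the proof is incomplete where the lemma is hardest.
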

\begin{proof}
Consider the map $h^{-1}$ from the slit plane 
$C_h$ onto $\Omega_-$ as described by item (4) of Theorem~\ref{prop}.
The measure of $V_n$ is equal to the integral of $|(h^{-1})'|^2$ over the set 
$S_{n}\cup \bar S_n$, where 
$\bar S_n=\{z: \bar z\in S_n\}$ is the mirrow symmetric to $S_n$ set,
and $S_n$ is a set in the upper half plane $\HH^+$, which is a
``half-strip''
bounded by the horizontal line $\Im z=\pi$ and two transversal curves
$\log(\partial \Omega_-)+(n-1)\log\tau$,
$\log(\partial \Omega_-)+n\log\tau$.


To estimate the integral as $n\to \pm\infty$ we use the parabolic 
fixed point theory applied to the map
$G^2(z)=z - A(z-x_0)^3 + \cdots$, where $A>0$.
The map $h_a:=(-2\log\tau)^{-1}h$ is an attracting Fatou
coordinate of the neutral fixed point $x_0$ of $G^2$:
$h_a\circ G^2(z)=\sigma\circ h_a(z)$,
for $z\in \Omega$ where $\sigma(w)=w+1$ is the shift. 
According to the general theory,

\[h_a(z) = \phi_a(L(z-x_0)^{-2})\] 
where
$L=(2A)^{-1/2}$ 
and
$\phi_a(w)=w+O(|w|^{1/2})$, as $|w|$ tends to $\infty$
in some sector $\Sigma_a=\{w: \Re w>c-\Im w\}$, $c>0$.
Similarly, there exists a repelling Fatou coordinate
$h_r$, such that
$h_r\circ G^2(z)=\sigma\circ h_r(z)$
for $z\in G^{-2}(\HH^{\pm})$, and

\[h_r(z) = \phi_r(L(z-x_0)^{-2})\] 
with the same constant $L$ as for $h_a$, and
$\phi_r(w)=w+O(|w|^{1/2})$, as $|w|$ tends to $\infty$
in a sector $\Sigma_r=\{w: \Re w<-c+\Im w\}$.

We have:

\[|(h_{a}^{-1})'(w)|^2=L/4 |w|^{-3}(1+O(|w|^{-1/2}))\]
as $|w|\to +\infty$ in $\Sigma_a$, and similarly

\[|(h_{r}^{-1})'(w)|^2=L/4 |w|^{-3}(1+O(|w|^{-1/2}))\]
as $|w|\to +\infty$ in $\Sigma_r$.

Note that the picture is mirrow symmetric w.r.t. the real axis.
In particular, $h_a(\bar z)=\overline{h_a(z)}$ etc.

Since we apply $h^{-1}(w)$ as $\Re w\to \pm\infty$,
introduce a pasting map (called also a horn map) $\Psi=h_r\circ h_a^{-1}$.
The map $\Psi$ has an analytic extension from $\Sigma_a\cap \Sigma_r$ to the
upper and lower half planes, it commutes with the shift $\sigma$,
and $\Psi(w)=w+O(|w|^{1/2})$ as $\Im w\to \infty$. It follows, that
\begin{equation}\label{assy}
\Psi(w)=w+v_{\pm}+O(\exp(-\pi |\Im w|))
\end{equation}
uniformly in half-planes compactly contained in 
$\HH^{\pm}$, where $v_{\pm}$ are two complex conjugated vectors.  

By the symmetry, the area $|V_n|$ of $V_n$ 
is twice the area of 
\[ h^{-1}(S_n)=h_{a}^{-1}(S_n/(-2\log\tau))\; .\]
Notice that $S_{n}=S_0+n\log\tau=\cup_{m=0}^{+\infty} (P+(n\log \tau + i\pi
m))$ where
$P$ is a ``rectangle'' bounded by the curves
$\Im z=\pi$, $\Im z=2\pi$ and 
$\log(\partial \Omega)-\log\tau$,
$\log(\partial \Omega)$. 
We will denote $\hat S_n=(-2\log\tau)^{-1}S_n$ etc.
The sets $\hat S_n$, $\hat S_0$, $\hat P$ switch the half planes,
i.e. lie in $\HH^-$.
Thus,

\[|V_n|=2\int\int_{\hat S_n} |(h_{a}^{-1})'(w)|^2 d\sigma_w
=2 \int\int_{\hat P}\sum_{m=0}^\infty
|(h_{a}^{-1})'(t-\frac{n}{2}-\frac{i\pi m}{2\log\tau})|^2 d\sigma_t \]
where $d\sigma_z$ denotes the area element of a complex variable $z$.

First, let $n\to -\infty$, so that 
$\Re (t-\frac{n}{2}-\frac{i\pi m}{2\log\tau})\to +\infty$.
By the asymptotics of $(h_a^{-1})'(w)$ in $\Sigma_a$, 


\[|V_n|=\frac{2L}{4}\, \times \]
\[ \times \int\int_{\hat P}
\sum_{m=0}^\infty\left[ |t+\frac{|n|}{2}-\frac{i\pi m}{2\log\tau}|^{-3} + 
O(|t+\frac{|n|}{2}+\frac{i\pi m}{-2\log\tau}|^{-7/2})\right]\, d\sigma_t.\]

Since $t$ belongs to a bounded domain $\hat P$,
one can replace the sums by corresponding integrals
and arrive at the following asymptotic formula:

\[|V_n|=\frac{4L |\hat P| I \log\tau}{\pi}\frac{1}{|n|^{2}} + \Delta_1(n),\]
where $I=\int_{0}^\infty dx/(1+x^2)^{3/2}$, 
and
$|\hat P|$ is the area of the
bounded domain $\hat P$, and $|\Delta_1(n)|<K_1|n|^{-5/2}$, for some
$K_1$ and all negative $n$.

As for $n$ positive, we can write
(assuming for definiteness that $n$ is even)

\[h_a^{-1}(\hat S_n)=h_{a}^{-1}(\hat S_0-n/2)=
h_{a}^{-1}\circ \sigma^{-n/2}(\hat S_0)= \]
\[ =G^{-n}\circ h_{a}^{-1}(\hat S_0)=
h_r^{-1}\circ \sigma^{-n/2}\circ \Psi(\hat S_0)=
h_r^{-1}(\Psi(\hat S_0)-n/2).\]
As $n\to +\infty$,
using the asymptotics for $(h_r^{-1})'(w)$ in $\Sigma_r$
and (\ref{assy}) for $\Psi$,

\[|V_n|=\frac{2L}{4}\, \times \]
\[ \times \int\int_{\hat P}
\sum_{m=0}^\infty \{ |t-\frac{n}{2}-\frac{i\pi m}{2\log\tau} +v_-
+O(\exp(-\frac{m \pi}{\log \tau}))|^{-3}  + \] 
\[ + O(|t-\frac{n}{2}-\frac{i m \pi}{2\log\tau}+v_-
+O(\exp(-\frac{m \pi}{\log \tau})|^{-7/2})\} 
\{ 1+O(\exp(-\frac{m \pi}{\log\tau}))\} d\sigma_t.\]

One rewrites it as


\[|V_n|=\frac{L}{2}\, \times \]
\[ \times \int\int_{\hat P}
\sum_{m=0}^\infty \{ |t-\frac{n}{2}-\frac{i m \pi}{2\log\tau}+v_-|^{-3}  +  
 O(|t-\frac{n}{2}-\frac{i m \pi}{\log\tau})|^{-7/2})\} 
\{ 1+O(\exp(-\frac{m \pi}{\log\tau}))\} d\sigma_t \]
\[=
\frac{L}{2}\, \times \] 
\[ \times \int\int_{\hat P}
\sum_{m=0}^\infty \{ [ |t-\frac{n}{2}-\frac{i m \pi}{\log\tau}+v_-|^{-3} ] 
[ 1+O(\exp(-\frac{m \pi}{\log\tau}) ] 
 + O(|t-\frac{n}{2}-\frac{i m \pi}{2\log\tau})|^{-7/2})\} d\sigma_t \]

Now we use 
the invariance of the Lebesgue measure under shifts and get the same
asymptotic formula as for $n\to -\infty$.

\end{proof}
 
Now Lemma~\ref{lem:24fp,5} implies the second claim of
Proposition~\ref{prop:22xp,2}. 

To address that last claim, first define 
\[ c_n(\mu) = \frac{\mu(V_n)}{\lambda(V_n)} \; .\]

Then 
\begin{equation}\label{equ:25fp,1}
\int_P \gamma_N \circ \Delta^0 d\mu = \sum_{n=-N}^N  n c_n(\mu)
\lambda(V_n) \; .
\end{equation}

To uniformly bound this quantity, we will need certain properties of
coefficients $c_n(\mu)$ for $\mu \in M_P$. As the result of bounded
distortion, $|\log c_n(\mu)|$ can be bounded independently of $\mu$,
but need stronger properties. 

\begin{lem}\label{lem:25fp,1}
Let $n$ be any integer with $|n| > 1$. Then there exists a constant
$Q$ so that for any $n$ and $\mu\in M_P$
\begin{itemize}
\item
\[ |c_n(\mu) - c_{n+1}(\mu)| < Q |n|^{-3/2} \]
\item
\[ |c_n(\mu) - c_{-n}(\mu)| < Q |n|^{-1/2} \; .\]
\end{itemize}
\end{lem}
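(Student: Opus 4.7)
The plan is to reduce both inequalities to a uniform Lipschitz bound on the density $\rho_\mu = d\mu/d\lambda$ of $\mu \in M_P$ near the parabolic fixed point $x_0$, combined with sharp geometric estimates on $V_n$ coming from the Fatou coordinate. Writing $\mu = \zeta_*(Q\lambda)$ with $\zeta$ a branch of $\tilde J^k$ mapping onto $P$, we have $\rho_\mu(z) = Q|(\zeta^{-1})'(z)|^2$ on $P$. By Proposition~\ref{prop:22xp,1}, $\zeta^{-1}$ extends univalently to a fixed neighborhood $U_P$ of $\overline P$; taking $U_P$ to contain $x_0$ in its interior, Koebe's distortion theorem yields a constant $C_1$ independent of $\zeta$ and $\mu$ such that $\rho_\mu$ extends continuously to $\overline P \cup \{x_0\}$ with the Lipschitz estimate $|\rho_\mu(z_1) - \rho_\mu(z_2)| \leq C_1 |z_1 - z_2|$ (the uniform $L^\infty$ bound $\|\rho_\mu\|_\infty \leq C$ follows from $\mu(P)=1$ and bounded distortion).

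Next I would derive the geometric estimates. From the Fatou-coordinate asymptotic $|h_a^{-1\prime}(w)|^2 = (L/4)|w|^{-3}(1+O(|w|^{-1/2}))$ from the proof of Lemma~\ref{lem:24fp,5}, together with the identity $h(V_{k,n+1}) = h(V_{k,n}) + \log\tau$, the map $\phi_n(z) := h^{-1}(h(z) + \log\tau)$ is a well-defined univalent bijection from $V_n$ onto $V_{n+1}$ satisfying
\[ \sup_{V_n}|\phi_n(z) - z| = O(|n|^{-3/2}), \qquad \sup_{V_n}\bigl||\phi_n'(z)|^2 - 1\bigr| = O(|n|^{-1}), \]
and $\mathrm{dist}(V_n, x_0) = O(|n|^{-1/2})$ so that $V_n \cup V_{-n} \subset B(x_0, C|n|^{-1/2})$ for large $|n|$. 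For the first inequality, a change of variable via $\phi_n$ rewrites $c_{n+1}(\mu)$ as the $|\phi_n'|^2$-weighted average of $\rho_\mu \circ \phi_n$ over $V_n$. Subtracting $c_n(\mu)$ and expanding gives two contributions: a term bounded by $C_1 \sup_{V_n}|\phi_n - \mathrm{id}| = O(|n|^{-3/2})$, and a covariance term bounded by $\|\rho_\mu - c_n(\mu)\|_{\infty,V_n} \cdot \||\phi_n'|^2 - 1\|_\infty = O(|n|^{-1/2}) \cdot O(|n|^{-1}) = O(|n|^{-3/2})$, where the $O(|n|^{-1/2})$ bound on $\rho_\mu - c_n(\mu)$ over $V_n$ uses Lipschitz together with the distance estimate. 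For the second inequality, both $c_n(\mu)$ and $c_{-n}(\mu)$ are Lebesgue averages of $\rho_\mu$ over sets in $B(x_0, C|n|^{-1/2})$, so by Lipschitz both lie within $C_1 C |n|^{-1/2}$ of $\rho_\mu(x_0)$, giving the conclusion.

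The main technical difficulty is verifying that the extension neighborhood $U_P$ from Proposition~\ref{prop:22xp,1} really contains $x_0$ in its interior with a definite Koebe margin — without this, the Lipschitz constant on $\rho_\mu$ near $x_0$ would blow up. The geometry in Theorem~\ref{prop} and Lemma~\ref{lem:22xp,1} (where the extension $U_L$ is described as lying in a sector avoiding the positive real ray issuing from $x_0$) supports this, but would require careful verification via the symmetry between $K_\pm$ and $L_\pm$. A secondary concern is controlling the contribution to $c_n(\mu)$ coming from the tails $V_{k,n}$ at very large $|k|$ (where $V_{k,n}$ accumulates at $x_0$ regardless of $n$); however, since $\lambda(V_{k,n}) = O(\max(|k|,|n|)^{-3})$, these tails contribute lower-order terms and do not disturb the leading asymptotics derived above.
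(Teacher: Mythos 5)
Your proposal is correct in substance and follows essentially the same route as the paper: both arguments rest on the uniform boundedness and Lipschitz continuity of the densities $d\mu/d\lambda$ (coming from Proposition~\ref{prop:22xp,1} plus Koebe distortion) combined with the Fatou-coordinate geometry of the pieces, and your treatment of the second inequality is the same as the paper's (all of $V_n\cup V_{-n}$ lies within $O(|n|^{-1/2})$ of $x_0$). The one genuine difference is in the first inequality: the paper compares the corresponding pieces $V_{k,n}$ and $V_{k,n+1}$, whose union has diameter $O(|n|^{-3/2})$, and then averages over $k$, whereas you transport $V_n$ onto $V_{n+1}$ by the map $\phi_n=h^{-1}(h(\cdot)+\log\tau)$ and split the difference into a displacement term and a Jacobian (covariance) term. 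Your version has the merit of making explicit the quantitative input that the paper's ``averaging over $k$'' leaves implicit, namely that the area distortion between levels $n$ and $n+1$ is $1+O(|n|^{-1})$ uniformly over the rows $k$; note, however, that this bound does not follow from the bare asymptotics $|(h_a^{-1})'(w)|^2=(L/4)|w|^{-3}(1+O(|w|^{-1/2}))$ that you cite, which only give $|\phi_n'|^2=1+O(|w|^{-1/2})$ and would degrade your covariance term to $O(|n|^{-1})$; to reach $O(|n|^{-1})$ for the Jacobian one needs derivative-level control of the error term of the Fatou coordinate (Cauchy estimates in sub-sectors, together with the horn-map expansion~(\ref{assy}) on the repelling side for one sign of $n$). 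Since the paper's own averaging step silently requires the analogous uniformity of $\lambda(V_{k,n+1})/\lambda(V_{k,n})$ in $k$, this is a point to be tightened rather than a defect peculiar to your argument. Finally, the ``main technical difficulty'' you flag is not an issue: the sets $V_n\cap P$ accumulate at $x_0$, so $x_0\in\overline{P}$ for each of $K_{\pm},L_{\pm}$, and Proposition~\ref{prop:22xp,1} already provides univalent extension of the inverse branches over a fixed neighborhood of $\overline{P}$, hence over $x_0$ with a definite Koebe margin; likewise the large-$|k|$ tails cause no trouble, exactly as you indicate.
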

\begin{proof}
The basic fact will use, which follows from
Proposition~\ref{prop:22xp,1}, is that functions $\log
\frac{d\mu}{d\lambda}(z)$ are bounded and Lipschitz-continuous,
uniformly for all $\mu\in M_P$. 

For any $k>0$ the set $V_{k,n}\cup V_{k_{n+1}}$ has diameter bounded
by $Q_1 |n|^{-3/2}$. This follows since the derivative of the Fatou
coordinate $h^{-1}(w)$ is asymptotically $|w|^{-3/2}$. By the uniform
Lipschitz property $\log \frac{d\mu}{d\lambda}$ differs by no more
than $O(|n|^{-3/2})$ between any tho points of this set and hence 

\[      (1 - Q_2 |n|^{-3/2} ) \frac{
\mu(V_{k,n+1})}{\lambda(V_{k,n+1})} \leq
\frac{\mu(V_{k,n})}{\lambda(V_{k,n})} \leq (1 + Q_2 |n|^{-3/2})
\frac{\mu(V_{k,n+1})}{\lambda(V_{k,n+1})} \; .\]

Since $c_n(\mu), c_{n+1}(\mu)$ are just averages of these quantities
for various $k$, 

\[ (1-Q_2 |n^{-3/2}|) \leq \frac{c_{n+1}(\mu)}{c_n(\mu)} \leq (1+Q_2
|n^{-3/2}|) \; .\]

Since $c_n(\mu)$ are uniformly bounded above, the first claim
follows. 

To see the second claim, observe that $V_{n}$ and $V_{-n}$ are in
a disk centered at the fixed point with radius $O(|n|^{-1/2})$. This
follows again from the asymptotics $|w|^{-\frac{1}{2}}$ for the Fatou
coordinate $h^{-1}(w)$. 
The uniform Lipschitz estimate then says that
\[ |\frac{d\mu}{d\lambda}(z_1) - \frac{d\mu}{d\lambda}(z_2)| \leq Q_3
|n|^{-\frac{1}{2}} \]
if $z_1 \in V_n$ and $z_2\in V_{-n}$. Since $c_n$ can be bounded above
and below by the extrema of $\frac{d\mu}{d\lambda}(z_1)$ for $z_1\in
V_n$ and $c_{-n}$ can be expressed in an analogous fashion, the
second claim follows.  

\end{proof}

Let us now denote 
\[ B_N = \sum_{n=1}^N n \lambda(V_n) \]
for $N>0$, $B_N = \sum_{n=-N}^{-1} n \lambda(V_n)$ for $N<0$ and
$B_0=0$. 

Applying Abel's transformation to the series in
Equation~\ref{equ:25fp,1}
\[  \int_P \gamma_N \circ \Delta^0 d\mu = \] 
\[ = \sum_{n=1}^{N-1}
B_n(c_{n+1}(\mu) - c_n(\mu)) + 
\sum_{n=-N+1}^{-1} B_n (c_{n-1}(\mu)-c_n(\mu)) +
B_N c_N(\mu) + B_{-N} c_{-N}(\mu) \; .\]  

The first sum can be bounded by 
\[ Q_1 \sum_{n=1}^{N-1} |B_n| n^{-3/2} \]
by Lemma~\ref{lem:25fp,1}. Since $|B_n| < Q_2 \log n$ by
Lemma~\ref{lem:24fp,5}, the sum is uniformly bounded for all $N$ and
$\mu$. The second sum is dealt with in the same way. 

Then 
\[ c_{-N}(\mu) B_{-N} + c_N(\mu) B_N = (c_{-N}(\mu)-c_N(\mu)) B_{-N} +
c_N(\mu)(B_{-N}+B_{N}) \; .\]

By Lemma~\ref{lem:25fp,1}, $(c_{-N}(\mu)-c_N(\mu)) B_{-N}$  goes to $0$ with
$N$. At the same time, $B_{-N}+B_{N}$ are bounded independently of $N$
by Lemma~\ref{lem:24fp,5}, since the leading terms $C |n|^{-2}$ in
$\lambda(V_n)$ give rise to exactly canceling contributions and the 
$O(|n|^{-5/2})$ corrections after multiplying by $n$ result in
convergent series. 

This ends the proof of Proposition~\ref{prop:22xp,2}.

\section{Main results: Proofs}
\paragraph{The level process.}
For $x\in K_+$ and $n>0$ let us define $Z_n$ to be the combinatorial
displacement of the branch of ${\tilde {\cal J}}^n$ whose domain contains
$x$. For Lebesgue-a.e. $x$, $Z_n$ are thus defined for all positive
$n$. We may set $Z_0$ to be $0$ everywhere. If ${\tilde {\cal J}}^n$ maps $x$
into a piece $P$ (where $P$ maybe any of the four pieces $K_{\pm},
L_{\pm}$), then clearly $Z_{n+1} = Z_{n} + \Delta_{\tilde J}({\tilde {\cal J}}^n(x))$. 
The sequence $(Z_n)_{n\geq 0}$ may be viewed as a stochastic process
on a probabilistic space $K_+$ with probability given by the Lebesgue
measure on $K_+$ normalized to total mass $1$.  

To this process we can apply Proposition~\ref{prop:14za,1}, because
its hypotheses are satisfied in view of
Proposition~\ref{prop:22xp,2}. 

\paragraph{The combinatorial displacements for the iterates of
$\Lambda$.}

Recall map $\Lambda$ which is equal to ${\tilde J}$ or ${\tilde J}^2$ on
various pieces of its domain. At almost every point $z$ of $K_+$, we
have a sequence $n_m$ where $\Lambda^m = {\tilde J}^{n_m}$ on a
neighborhood of $z$. In particular, the combinatorial displacement of
$\Lambda^m$ is $Z_{n_m}$. Also, $n_{m+1} - n_{m} \leq 2$. 

\begin{prop}\label{prop:2jp,2}
For almost every $x\in K_+$ both $\lim\inf_{m\rightarrow\infty} Z_{n_m}(x) =
-\infty$ as well as $\lim\sup_{m\rightarrow\infty} Z_{n_m}(x) = +\infty$
hold true. 
\end{prop}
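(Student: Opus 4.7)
The plan is to apply Proposition~\ref{prop:14za,1} directly to the subsequence process $W_m := Z_{n_m}$, viewed as a stochastic process on $K_+$ with probability given by normalized Lebesgue measure. Since $\Lambda^m = \tilde{\mathcal J}^{n_m}$ on a neighborhood of a.e.\ $x$ and $n_{m+1}-n_m\in\{1,2\}$, the increment $F_m := W_m - W_{m-1}$ is precisely the combinatorial displacement of $\Lambda$ at $\Lambda^{m-1}(x)$: a single $\Delta_{\tilde{\mathcal J}}$-value when $\Lambda=\tilde{\mathcal J}$, or the sum of two consecutive such values when $\Lambda=\tilde{\mathcal J}^2$ on $\tau^{-1}\Omega_+$. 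This makes $W_m$ structurally parallel to $Z_n$ and eligible for the same martingale argument.

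First I would introduce the $\Lambda$-analog $\Delta^0_\Lambda$ of $\Delta^0$: set $\Delta^0_\Lambda(z) = \Delta^0(z)$ off $\tau^{-1}\Omega_+$, and $\Delta^0_\Lambda(z) = \Delta^0(z) + \Delta^0(\tilde{\mathcal J}(z))$ on $\tau^{-1}\Omega_+$. Write $F_m = \Delta_m + I_m$ with $\Delta_m := \Delta^0_\Lambda(\Lambda^{m-1}(x))$ and $I_m$ the remainder, and use the filtration $\mathcal G_m$ generated by the $\Lambda$-iterates of $x$.

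The technical heart is to establish a $\Lambda$-version of Proposition~\ref{prop:22xp,2} for the class $M^\Lambda_P$ of normalized pushforwards of Lebesgue under branches of $\Lambda^n$. On branches where $\Lambda=\tilde{\mathcal J}$, the three required estimates---the $n^{-2}$ tail, the bounded principal value, and the $L^{3/2}$ bound on the error---follow directly from Proposition~\ref{prop:22xp,2}, since $M^\Lambda_P\subset M_P$ up to bounded distortion (Proposition~\ref{prop:22xp,1} applies equally to $\Lambda$, which is built from at most two $\tilde{\mathcal J}$-steps). On branches where $\Lambda=\tilde{\mathcal J}^2$, split the drift into its two summands, apply Proposition~\ref{prop:22xp,2} to each, using that the pushforward by $\tilde{\mathcal J}$ of a measure in $M_P$ is again in the appropriate $M_{P'}$, and sum; the heavy-tail exponent, the principal-value bound, and the $L^{3/2}$ bound are all preserved under this addition. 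Proposition~\ref{prop:14za,1} then yields $\limsup_m W_m/m=+\infty$ and $\liminf_m W_m/m=-\infty$ almost surely, which in particular gives $\limsup_m Z_{n_m}=+\infty$ and $\liminf_m Z_{n_m}=-\infty$ almost surely.

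The hard part will be the principal-value cancellation for the compound drift on $\tau^{-1}\Omega_+$: the individual cancellations for $\Delta^0$ rely on symmetries of the Fatou-coordinate geometry, and the sum $\Delta^0(z)+\Delta^0(\tilde{\mathcal J}(z))$ involves a pushforward that may distort that symmetry. A cleaner fallback, should direct verification prove awkward, is to absorb $\Delta^0\circ\tilde{\mathcal J}$ into the error term $I_m$ on $\tau^{-1}\Omega_+$, trading the principal-value condition for an $L^{3/2}$ bound on that single piece; the latter then follows from Lemma~\ref{lem:24fp,4} applied to the bounded-distortion-controlled region $\tau^{-1}\Omega_+$.
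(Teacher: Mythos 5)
Your route is genuinely different from the paper's, and it has a real gap exactly where you flag it. Your plan stands or falls on a $\Lambda$-version of Proposition~\ref{prop:22xp,2}, and the principal-value bound for the compound drift $\Delta^0_\Lambda(z)=\Delta^0(z)+\Delta^0(\tilde{\mathcal J}(z))$ on $\tau^{-1}\Omega_+$ is never established: asserting that it is ``preserved under this addition'' is not an argument, since $\gamma_N(X+Y)\neq\gamma_N(X)+\gamma_N(Y)$ and, for conditional tails of size $n^{-2}$, the truncation error is carried precisely by events where one summand is of order $N$, so one needs a cancellation analysis of the same delicacy as Lemma~\ref{lem:25fp,1} but now for the dependent pair $(\Delta^0,\Delta^0\circ\tilde{\mathcal J})$. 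Worse, the proposed fallback is wrong. By the second claim of Proposition~\ref{prop:22xp,2}, $\Delta^0$ has conditional law with two-sided tails comparable to $n^{-2}$ under every $\mu\in M_P$; hence $\Delta^0\circ\tilde{\mathcal J}$, even restricted to $\tau^{-1}\Omega_+$, is heavy-tailed in the same way (each $\tilde{\mathcal J}$-branch inside $\tau^{-1}\Omega_+$ pushes normalized Lebesgue to a measure comparable to Lebesgue on a full piece), so its $3/2$-moment, indeed its first moment, is infinite, and it cannot be absorbed into $I_m$ without destroying the third hypothesis of Proposition~\ref{prop:14za,1}. Lemma~\ref{lem:24fp,4} bounds $\int|\Delta_{\tilde H}-\Delta^0|^{3/2}$, not $\int|\Delta^0|^{3/2}$, so it does not give what you invoke it for.

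For contrast, the paper does not re-verify any probabilistic hypotheses along $\Lambda$ at all. It applies Proposition~\ref{prop:14za,1} only to the full process $Z_n$ and then deduces the statement for the subsequence by a soft density-point argument: if $Z_{n_m}\geq M$ on a set $S$ of positive measure, pick a density point $x_0$ of $S$ and an $n$ with $Z_n(x_0)<M$; on the branch domain $U_n$ of $\tilde{\mathcal J}^n$ containing $x_0$ the value $Z_n$ is constant and $<M$, bounded distortion (Proposition~\ref{prop:22xp,1}) gives a definite proportion of $U_n$ on which $Z_{n+1}<Z_n<M$, and since $n_{m+1}-n_m\leq 2$ at least one of $n$, $n+1$ lies in $\{n_m\}$ for each such point, so a fixed proportion of an arbitrarily small neighborhood of $x_0$ misses $S$ --- a contradiction. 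If you want to salvage your approach you must actually prove the $\Lambda$-analogue of the third bullet of Proposition~\ref{prop:22xp,2}; otherwise the paper's argument shows the gap bound $n_{m+1}-n_m\leq 2$ can be used to bypass the compound-drift estimates entirely.
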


Suppose this is not the case and the first statement fails. Then for a
set $S$ of positive measure $Z_{n_m}(x) \geq M$ for all $m$ and $x\in
S$. Let $x_0$ be a density point of $S$ and, by
Proposition~\ref{prop:14za,1}, 
$\lim\inf_{n\rightarrow\infty} Z_{n}(x) = -\infty$. Choose $n$ so that 
$Z_n(x) < M$. Let $U_n$ be the domain of the branch of ${\tilde J}^n$
which contains $x_0$. By the bounded distortion of ${\tilde J}$, 
$U_n$ for all such $n$ form a basis of
neighborhoods of $x_0$ such that $|U_n| \geq \kappa (\diam U_n)^2$ for
a constant $\kappa > 0$. By the bounded distortion of $\tilde J$, each
$U_n$ contains a fixed proportion of points $x$ for which $Z_{n+1}(x)
< Z_n(x)$. But for all such $x$ either $n$ or $n+1$ is in the
subsequence $n_m$, so none of them belongs to $S$ and $x_0$ is not a
density point. 

$\lim\sup_{m\rightarrow\infty} Z_{n_m}(x) = +\infty$ is proved by
contradiction in the same way.

\paragraph{Theorem~\ref{A} and the symmetry of the tower}
Recall that $H$ is a limiting map introduced in Theorem~\ref{prop}.

Here we prove a statement which is stronger than Theorem~\ref{A}:

\begin{theo}\label{rec}
There is a map $\Phi$ defined on a countable union of disjoint 
open topological disks whose complement in $\CC$ has measure $0$, 
and such that on each connected component of its domain $\Phi$ belongs
to the tower dynamics of $H$, with the following property:

\begin{itemize}
\item almost every point in the plane
visits any neighborhood of zero and infinity under the iterates of
$\Phi$, 
\item
for any point $x$ of the Julia set of $H$ which is in the domain of
$\Phi^p$, $p>0$, $\Phi^p$ is an iterate of $H$ on a neighborhood of
$x$. 
\end{itemize}
\end{theo}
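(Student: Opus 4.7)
My proposal is to take $\Phi$ to be the map associated to $\Lambda$ (in the sense of Definition~\ref{defi:21ga,1}), extended by the natural symmetries of the problem---rescaling by $\tau$ and reflection across the real axis---so as to be defined on the union of all rescaled copies of the fundamental ring $V_0:=\Omega_-\setminus\tau^{-1}\overline{\Omega_-}$ together with their reflections. These sets exhaust $\CC$ up to the single point $0$ and a countable collection of slits, hence up to Lebesgue measure zero. Each component of the domain is an open topological disk, namely a univalent domain of some branch of a suitable rescaling of $\Lambda$, and on each such component $\Phi$ is by construction a single element of the tower of $H$ via Lemma~\ref{lem:22xp,2}. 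The complement of the domain has measure zero because the bad set (points whose $\tilde{\cal J}$-orbit hits a non-Markov seam) has measure zero, thanks to the Markov structure and the bounded-distortion estimate of Proposition~\ref{prop:22xp,1}.

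The second bullet is immediate from Proposition~\ref{prop:2jp,1}: every Julia-set point lies in $\Omega_-\cup\Omega_+$, i.e.\ in $\tau^{-k}V_0$ or its reflection for some $k\ge 0$, and at such a point the map associated to $\Lambda^p$ is an iterate of $H$; the rescaling/reflection used to build $\Phi$ from $\Lambda$ at non-central shells only conjugates tower indices and therefore leaves the conclusion intact at Julia points.

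For the first bullet, the key geometric link is the identity $\Phi^m(x)\in\tau^{p+Z_{n_m}(x)}V_0$ whenever $x\in\tau^p V_0$, which follows from Lemma~\ref{lem:2ja,2} together with the additivity of combinatorial displacements under composition (Lemma~\ref{lem:22xp,2}); here $Z_{n_m}(x)$ is the cumulative displacement of the level process of Section~4, with $n_m$ counting the number of $\tilde{\cal J}$-steps absorbed into $m$ steps of $\Lambda$. Proposition~\ref{prop:2jp,2} gives $\liminf_m Z_{n_m}=-\infty$ and $\limsup_m Z_{n_m}=+\infty$ a.e.\ on $K_+$, and since $\tau>1$ while $V_0$ is bounded and bounded away from $0$, this oscillation translates geometrically into the $\Phi$-orbit entering every neighborhood of $\infty$ along the subsequence with $p+Z_{n_m}\to+\infty$ and every neighborhood of $0$ along the subsequence with $p+Z_{n_m}\to-\infty$.

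The step that I expect to require the most care is promoting this conclusion from $K_+$ to a.e.\ point of $\CC$. I would handle this in two parts. First, a.e.\ point of $V_0$ is carried into $K_+$ by some iterate of $\tilde{\cal J}$: the decomposition of $\tilde{\cal J}$ into the four image pieces $K_\pm,L_\pm$, the reflection symmetry among them, and the bounded-distortion Proposition~\ref{prop:22xp,1} together imply (by the standard argument that a Markov map with finitely many states and bounded distortion is transitive a.e.\ on each state) that every state, in particular $K_+$, is visited a.e. Second, the $\tau$-equivariance of the tower and the reflection symmetry transport the conclusion from $V_0$ to every shell $\tau^pV_0$ and its reflection, which together exhaust $\CC$ modulo a null set, completing property~(i).
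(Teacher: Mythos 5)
Your proposal is correct and follows essentially the same route as the paper: $\Phi$ is taken to be the map associated to $\Lambda$, the second bullet is exactly Proposition~\ref{prop:2jp,1}, and the first bullet comes from the oscillation of combinatorial displacements (Propositions~\ref{prop:14za,1} and~\ref{prop:2jp,2}) combined with Lemmas~\ref{lem:2ja,2} and~\ref{lem:22xp,2} and the fact that the rescaled fundamental rings accumulate only at $0$ and $\infty$. Your extra care in transferring the a.e.\ statement from $K_+$ to the whole plane (transitivity of the Markov map plus $\tau$-equivariance) fills in a step the paper leaves implicit; note only that the reflection symmetry you invoke is redundant, since $\Omega_-$ and the fundamental ring are already symmetric about the real axis.
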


{\bf Remark.} It seems to be natural to call the dynamics of $H$ with
such properties {\it metrically symmetric}.

Map $\Phi$ is defined to be associated, in the sense of
Definition~\ref{defi:21ga,1}, to the induced map $\Lambda$
introduced by Proposition~\ref{prop:2jp,1}.

Proposition~\ref{prop:14za,1} asserts that for almost every point its
combinatorial displacements vary from $-\infty$ to $+\infty$. Recalling
Lemma~\ref{lem:22xp,2}, for almost every point $z$ there is a sequence of
iterates in the maximal tower which map $z$ into $\tau^{k_n} P_{k_n}$
where  $k_n \rightarrow +\infty$ and each $P_{k_n}$ is one of the four
pieces $K_{\pm}, L_{\pm}$. Since all $P_{k_n}$ are contained in a
fixed ring centered at $0$, that means images of $z$ under those
iterates tend to $\infty$. But similarly there is a sequence $l_N
\rightarrow -\infty$ with the same property and images of $z$ under
those iterates tend to $0$.

\paragraph{Finite order Feigenbaum maps: Corollary~\ref{finite}}

We use mainly Theorem~\ref{prop}, see also~\cite{LSw1}.
The Julia set $J(H)$ of $H$ is a compact set.
Fix a neighborhood $V$ of $J(H)$.
To show that the area $|J(H^{(\ell)})|$ tends to zero,
it is enough to show that $J(H^{(\ell)})\subset V$
for all $\ell$ large enough.
To this end, for any point $w$ outside of $V$ there is 
a minimal $j\ge 0$, such that $H^j(w)$ is outside of 
the closure of $\Omega$. Since $H^{(\ell)}$ converges to $H$
uniformly on compact sets in $\Omega$, we have that
also $(H^{(\ell)})^j(w)$ is outside of the closure
of $\Omega$ as well. On the other hand, for every $\ell$,
there is a maximal polynomial-like extension
of $H^{(\ell)})$ to a domain $\Omega_\ell$ onto a slit 
complex plane~\cite{Epstein}.
The boundary of $\Omega_\ell$ is invariant
under $G_\ell^{-1}$, where $G_\ell=H^{(\ell)}\circ \tau_\ell^{-1}$.
Then $G_\ell^{-1}$ converge to $G^{-1}$ in $\HH^{\pm}$ uniformly
on compacts. It follows, that the boundaries of
$\Omega^{(\ell)}$  converge uniformly to the boundary of $\Omega$.
Therefore, $(H^{(\ell)})^j(w)$ is outside of
$\Omega^{(\ell)}$, for $\ell$ large enough, i.e. $w$
is not in the Julia set of $J(H^{(\ell)})$.

This proves Corollary~\ref{finite}. However, on the question of
whether maps of finite order have Julia sets of zero measure, our
method sheds little light, since it is based on the infinite variance of
the drift function, which does not hold in any finite order case.

\end{document}